\newtheorem{theorem}{Theorem}[section]
\newtheorem{lemma}[theorem]{Lemma}
\newtheorem{definition}[theorem]{Definition}
\newtheorem{proposition}[theorem]{Proposition}
\newtheorem{corollary}[theorem]{Corollary}
\newtheorem{remark}[theorem]{Remark}
\newtheorem*{theorem*}{\it Theorem}
\def\R{\mathbb R}
\def\N{\mathbb N}
\numberwithin{equation}{section}
\def\1{\raisebox{2pt}{\rm{$\chi$}}}
\def\Xint#1{\mathchoice
	{\XXint\displaystyle\textstyle{#1}}%
	{\XXint\textstyle\scriptstyle{#1}}%
	{\XXint\scriptstyle\scriptscriptstyle{#1}}%
	{\XXint\scriptscriptstyle\scriptscriptstyle{#1}}%
	\!\int}
\def\XXint#1#2#3{{\setbox0=\hbox{$#1{#2#3}{\int}$}
		\vcenter{\hbox{$#2#3$}}\kern-.5\wd0}}
\def\dashint{\Xint-}
\newcommand{\twopartdef}[4]
{
	\left\{
	\begin{array}{ll}
		#1 & #2 \\
		#3 & #4
	\end{array}
	\right.
}
\begin{document}
	
\title[Total variation flow in metric measure spaces]{\bf The Neumann and Dirichlet problems for the total variation flow in metric measure spaces}
	
\author[W. G\'{o}rny and J. M. Maz\'on]{Wojciech G\'{o}rny and Jos\'e M. Maz\'on}
	
\address{ W. G\'{o}rny: Faculty of Mathematics, Universit\"at Wien, Oskar-Morgerstern-Platz 1, 1090 Vienna, Austria; Faculty of Mathematics, Informatics and Mechanics, University of Warsaw, Banacha 2, 02-097 Warsaw, Poland
\hfill\break\indent
{\tt  wojciech.gorny@univie.ac.at }
}
	
\address{J. M. Maz\'{o}n: Departamento de An\`{a}lisis Matem\`atico,
Universitat de Val\`encia, Dr. Moliner 50, 46100 Burjassot, Spain.
\hfill\break\indent
{\tt mazon@uv.es }}
	
%
%
	
\keywords{Metric measure spaces, Nonsmooth analysis, Gradient flows, Total variation flow, Gauss-Green formula, Entropy solutions.\\
\indent 2020 {\it Mathematics Subject Classification:} 49J52, 58J35, 35K90, 35K92.}
	
\setcounter{tocdepth}{1}

\date{\today}
	
\begin{abstract}
We study the Neumann and Dirichlet problems for the total variation flow in metric measure spaces. We prove existence and uniqueness of weak solutions and study their asymptotic behaviour. Furthermore, in the Neumann problem we provide a notion of solutions which is valid for $L^1$ initial data, as well as prove their existence and uniqueness. Our main tools are the first-order linear differential structure due to Gigli and a version of the Gauss-Green formula.
\end{abstract}
	
\maketitle

{\renewcommand\contentsname{Contents}
\setcounter{tocdepth}{3}
\tableofcontents}

\section{Introduction}

Since its introduction as a means of solving the denoising problem in the seminal work by Rudin, Osher and Fatemi (\cite{ROF}), the total variation flow in Euclidean spaces
\begin{equation}\label{eq:tvflowintro}
 u_t = \mbox{div} \bigg( \frac{Du}{|Du|} \bigg) \qquad \mbox{in } \Omega \times (0,\infty)
\end{equation}
has remained one of the most popular tools in image processing. From the mathematical point of view, the study of the total variation flow in Euclidean spaces  was done in two principal ways. The first approach, which uses as main tools the classical theory of maximal monotone operators due to Brezis (\cite{Brezis}) and the Crandall-Liggett Theorem (\cite{CrandallLiggett}),  a characterisation of solutions using vector fields with integrable divergence and a Gauss-Green formula in the form proved by Anzelotti in \cite{Anz}, was established in \cite{ABCM0} and \cite{ABCM} (see \cite{ACMBook} for a survey).  The second approach, based on the definition of pseudosolution introduced by Lichnewsky and Temam in \cite{LT},  was developed in \cite{BDM1} and its core is the concept of a {\it variational solution}.  The two approaches are complementing; the main advantage of the first one is that it enables a uniform approach to the total variation flow on the whole space and Neumann and Dirichlet boundary conditions, as well as the possibility to explicitly identify solutions and their asymptotic profiles. The main advantage of the second one is that it is also possible to study the Dirichlet problem with time-dependent boundary conditions.

The natural setting to look for solutions to \eqref{eq:tvflowintro} is to require that for a.e. $t \in (0,\infty)$ the function $u(\cdot,t)$ is a function of bounded variation, i.e. its distributional derivative is a Radon measure. One of the main problems is to properly define the right-hand side of equation \eqref{eq:tvflowintro}, taking into account that the denominator may disappear on a set of positive Lebesgue measure; actually, this phenomenon of formation of facets it a typical property of solutions to the total variation flow. In the approach presented in \cite{ACMBook}, this is done by replacing $\frac{Du}{|Du|}$ with a vector field with integrable divergence which agrees $|Du|$-a.e. with the Radon-Nikodym derivative $\frac{d Du}{d|Du|}$.

In this paper, we will study the total variation flow on bounded domains in metric measure spaces with respect to Neumann and Dirichlet boundary conditions. We assume that the metric space $(\mathbb{X},d,\nu)$ is complete, separable, the measure $\nu$ is doubling and satisfies a Poincar\'e inequality, and we require a suitable assumption on the domain $\Omega$ which guarantees existence of traces. Recently, related problems attracted considerable attention; for instance, the corresponding elliptic problem for the nonhomogeneous Neumann boundary conditions was studied in \cite{LMS}, and a nonlocal version of the total variation flow in metric random walk spaces (which includes as a particular case the total variation flow in graphs and also the nonlocal case for nonsingular kernel) was studied in \cite{MST1}.  Most importantly, the concept of variational solutions to the Dirichlet problem for the total variation flow introduced in \cite{BDM1} was generalised to metric measure spaces in \cite{BCC}. Then, the authors proved existence and uniqueness of the variational solution.

Our main goal in this paper is to provide a unified framework for the study of the total variation flow in the metric setting. It will be introduced in the spirit of the results in \cite{ACMBook}, using as a main tool the metric version of the Anzelotti-Gauss-Green formula developed in \cite{GM2021-2} (see also our previous work \cite{GM2021} about the Cauchy problem for the $p$-Laplacian operator). We will provide a characterisation of solutions and prove their existence and uniqueness, using the classical theory of maximal monotone operators when the data is in $L^2(\Omega,\nu)$ and the theory of completely accretive operators (\cite{BCr2}) for data in $L^1(\Omega,\nu)$, in the following three situations. In the first one, we study the evolution with Neumann boundary conditions with initial data in $L^2(\Omega,\nu)$. In the second one, we introduce the notion of entropy solutions and allow for initial data in $L^1(\Omega,\nu)$. In the third one, we study the evolution with Dirichlet boundary condition in $L^1(\partial\Omega,|D\chi_\Omega|_\nu)$ and initial data in $L^2(\Omega,\nu)$.

To the best of our knowledge, our existence and uniqueness result for $L^1$ initial data is the first result in this direction in metric measure spaces; an analogous result is not avalaible even in the case of the heat flow. Furthermore, to the best of our knowledge, the Neumann problem has not been studied in the literature in the metric setting. The Dirichlet problem was studied using the concept of variational solutions in \cite{BCC}. Our notion of solutions is a bit different, because we require more assumptions on the domain in order to introduce traces of BV functions, vector fields with integrable divergence and their normal traces, while the results in \cite{BCC} are introduced using an extension of $u$ to a slightly larger domain. On the other hand, our notion of solutions is easier to obtain and work with, and we show that it is consistent with the variational solutions up to the choice of the extension.

Let us shortly describe the contents of the paper. Because our goal is to study the Neumann and Dirichlet problems, we will work under assumptions that guarantee existence of traces of sufficiently regular functions. In particular, the metric space $(\mathbb{X},d)$ is complete and separable, and it is equipped with a Radon measure $\nu$ which is doubling and satisfies a Poincar\'e inequality. In Section \ref{sec:preliminaries}, we first recall the definitions of Sobolev and BV functions in such metric measure spaces, then the construction of the linear first-order differential structure due to Gigli, and finally the notion of Anzellotti pairings and the Gauss-Green formula.

In Section \ref{Neumann}, we study the Neumann problem for the total variation flow for a sufficiently regular bounded domain $\Omega$ in a metric measure space. In its first part, we introduce a notion of weak solutions with initial data in $L^2(\Omega,\nu)$, and we use the classical theory of maximal monotone operators (see \cite{Brezis}) to prove their existence and uniqueness for any initial datum in $L^2(\Omega,\nu)$. Moreover, as a consequence of recent results given in \cite{BuBu} on the gradient flow of a coercive and $1$-homogeneous convex functional defined on a Hilbert space, we show that the weak solutions of the Neumann problem reach the average of the initial data in finite time. We also obtain bounds of the extinction time and asymptotic profiles for finite extinction.

In the second part of Section \ref{Neumann}, we study the Neumann problem for the total variation flow with initial data in $L^1(\Omega,\nu)$. To this end, we introduce the notion of entropy solutions; the definition is based on a property of truncations of the solutions, analogously to the definition in the Euclidean case. We prove existence and uniqueness of entropy solutions and show that they are consistent with weak solutions if the initial datum lies in $L^2(\Omega,\nu)$.

In the final Section \ref{sec:dirichlet}, we study the Dirichlet problem for the total variation flow. As for the Neumann problem, we introduce a notion of weak solutions with initial data in $L^2(\Omega,\nu)$ and prove their existence and uniqueness. Furthermore, for the homogeneous problem, i.e. when the boundary datum is null, we also study the asymptotic behaviour of its solution, showing that we have extinction in finite time and obtain its asymptotic profiles. Finally, we show that the notion of weak solutions is consistent with the variational solutions introduced in \cite{BCC}.

\section{Preliminaries}\label{sec:preliminaries}

\subsection{Sobolev and BV spaces}
	
 Given a metric measure space $(\mathbb{X}, d, \nu)$ and $p \in [1,\infty)$, there are several possible definitions of Sobolev spaces on $\mathbb{X}$; nonetheless, on complete and separable metric spaces equipped with a doubling measure, all these definitions agree (see \cite{AGS1,DiMarinoTh}). In this paper, we work under these assumptions, and to simplify some arguments we choose to use the Newtonian approach. We follow the presentation in \cite{BB}.

\begin{definition}{\rm We say that a measure $\nu$ on a metric space $\mathbb{X}$ is {\it doubling}, if there exists a constant $C_d \geq 1$ such that following condition holds:
\begin{equation}
0 < \nu(B(x,2r)) \leq C_d \, \nu(B(x, r)) < \infty
\end{equation}
for all $x \in \mathbb{X}$ and $r > 0$. The constant $C_d$ is called the doubling constant of $\mathbb{X}$. }
\end{definition}

\begin{definition}{\rm  We say that $\mathbb{X}$ supports a {\it weak $(1,p)$-Poincar\'{e} inequality} if there exist constants $C_P > 0$ and $\lambda \geq 1$ such that for all balls $B \subset \mathbb{X}$, all measurable functions $f$ on $\mathbb{X}$ and all upper gradients $g$ of $f$,
$$\dashint_B \vert f - f_B \vert d \nu \leq C_P r \left( \dashint_{\lambda B} g^p d\nu \right)^{\frac1p},  $$
where $r$ is the radius of $B$ and
$$f_B:= \dashint_B f d\nu := \frac{1}{\nu(B)} \int_B f d\nu.$$
}
\end{definition}

We say that a Borel function $g$ is an {\it upper gradient} of a Borel function $u: \mathbb{X} \rightarrow \R$ if for all curves $\gamma: [0,l_\gamma] \rightarrow \mathbb{X}$ we have
$$ \left\vert u(\gamma(l_\gamma)) - u(\gamma(0)) \right\vert \leq \int_\gamma g \,  ds.$$
If this inequality holds for $p$-almost every curve, i.e. the $p$-modulus (see for instance \cite[Definition 1.33]{BB}) of the family of all curves for which it fails equals zero, then we say that $g$ is a {\it $p$-weak upper gradient} of $u$.

The Sobolev-Dirichlet class $D^{1,p}(\mathbb{X})$ consists of all Borel functions $u: \mathbb{X} \rightarrow \R$ for which there exists  an upper gradient (equivalently: a $p$-weak upper gradient) which lies in $L^p(\mathbb{X},\nu)$. The Sobolev space $W^{1,p}(\mathbb{X}, d, \nu)$ is defined as
$$W^{1,p}(\mathbb{X}, d, \nu):= D^{1,p}(\mathbb{X}) \cap L^p(\mathbb{X}, \nu).$$
In the literature, this space is sometimes called the Newton-Sobolev space (or Newtonian space) and is denoted $N^{1,p}(\mathbb{X})$. The space $W^{1,p}(\mathbb{X},d,\nu)$ is endowed with the norm
\begin{equation*}
\| u \|_{W^{1,p}(\mathbb{X},d,\nu)} = \bigg( \int_{\mathbb{X}} |u|^p \, d\nu + \inf_g \int_{\mathbb{X}} g^p \, d\nu \bigg)^{1/p},
\end{equation*}
where the infimum is taken over all upper gradients of $u$  (equivalently: over all $p$-weak upper gradients). Furthermore, if $\nu$ is doubling and a weak $(1,p)$-Poincar\'e inequality is satisfied, Lipschitz functions are dense in $W^{1,p}(\mathbb{X},d,\nu)$.  We will later apply the same definition to open subsets $\Omega \subset \mathbb{X}$.

For every $u \in W^{1,p}(\mathbb{X},d,\nu)$, there exists a minimal $p$-weak upper gradient $|Du| \in L^p(\mathbb{X},\nu)$, i.e. we have
\begin{equation*}
|Du| \leq g \quad \nu-\mbox{a.e.}
\end{equation*}
for all $p$-weak upper gradients $g \in L^p(\mathbb{X},\nu)$ (see \cite[Theorem 2.5]{BB}). It is unique up to a set of measure zero. In particular, we may simply plug in $|Du|$ in the infimum in the definition of the norm in $W^{1,p}(\mathbb{X},d,\nu)$. Moreover, in \cite{AGS1} (see also \cite{DiMarinoTh}) it was proved that on complete and separable metric spaces equipped with a doubling measure (or even with a bit weaker assumptions) the various definitions of Sobolev spaces are equivalent, but also that various definitions of $|Du|$ are equivalent, including the Cheeger gradient or the minimal $p$-relaxed slope of $u$.

 Also for functions of bounded variation there are there are several different ways to introduce them in metric measure spaces. However, again on complete and separable metric spaces equipped with a doubling measure again they are equivalent, see \cite{ADiM,DiMarinoTh}. In this paper, we follow the definition of total variation introduced by Miranda in \cite{Miranda1}. For $u \in L^1(\mathbb{X},\nu)$, we define the total variation of $u$ on an open set $\Omega \subset \mathbb{X}$ by the formula
\begin{equation}\label{dfn:totalvariationonmetricspaces}
\vert D u \vert_{\nu}(\Omega):= \inf \left\{ \liminf_{n \to \infty} \int_\Omega g_{u_n} \, d\nu \ : \ u_n \in Lip_{loc}( \Omega), \ u_n \to u \ \hbox{in} \ L^1(\Omega, \nu) \right\},
\end{equation}
where $g_{u_n}$ is a $1$-weak upper gradient of $u$.  If $\nu$ is doubling on $\Omega$ and $(\Omega,d,\nu)$ satisfies a weak $(1,1)$-Poincar\'e inequality,  by density of Lipschitz functions in $W^{1,1}(\Omega,d,\nu)$ in the definition above we may require that $u_n$ are Lipschitz instead of locally Lipschitz. Moreover, the total variation $|Du|_\nu(\mathbb{X})$ defined by formula \eqref{dfn:totalvariationonmetricspaces} is lower semicontinuous with respect to convergence in $L^1(\mathbb{X},\nu)$. The space of functions of bounded variation $BV(\mathbb{X},d,\nu)$ consists of all functions $u \in L^1(\mathbb{X},\nu)$ such that $|Du|_\nu(\mathbb{X}) < \infty$. It is a Banach space with respect to the norm
$$\Vert u \Vert_{BV(\mathbb{X},d,\nu)}:= \Vert u \Vert_{L^1(\mathbb{X},\nu)} + \vert D u \vert_{\nu}(\mathbb{X}).$$
Convergence in norm is often too much to ask when we deal with $BV$ functions, therefore we will frequently use the notion of strict convergence. We say that a sequence $\{ u_i \} \subset BV(\mathbb{X},d,\nu)$ {\it strictly converges} to $u \in BV(\mathbb{X},d,\nu)$, if $u_i \to  u$ in $L^1(\mathbb{X}, \nu)$ and $| Du_i |_\nu(\mathbb{X}) \to | Du |_\nu(\mathbb{X})$. Note that we may apply these definition also for open subsets of $\mathbb{X}$.

We turn our attention to the definition of the boundary measure of an open set in a metric measure space. A set $E \subset \mathbb{X}$ is said to be of finite perimeter if $\1_E \in BV(\mathbb{X},d, \nu)$, and its perimeter is defined as
$${\rm Per}_{\nu}(E):= \vert D \1_E \vert_{\nu}(\mathbb{X}).$$
If $U \subset \mathbb{X}$ is an open set, we define the perimeter of $E$ in $U$ as
$${\rm Per}_{\nu}(E, U):= \vert D \1_E \vert_{\nu}(U).$$
Another common way to define the boundary measure in metric measure spaces in the {\it codimension one Hausdorff measure}. Given a set $A \subset \mathbb{X}$, it is defined as
$$\mathcal{H}(A):= \lim_{R \to 0} \inf \left\{ \sum_{i=1}^\infty \frac{\nu(B(x_i,r_i))}{r_i} \ : \ A \subset \bigcup_{i=1}^\infty B(x_i,r_i), \ 0 < r_i \leq R \right\}.$$
It is known from \cite[Theorem 5.3]{Ambrosio} that if $E \subset \mathbb{X}$ is of finite perimeter, then for any Borel set $A \subset \mathbb{X}$,
\begin{equation}\label{Ammb}
\frac{1}{C} \mathcal{H} (A \cap \partial_* E) \leq {\rm Per}_{\nu}(E, A) \leq C \mathcal{H} (A \cap \partial_* E),
\end{equation}
where $\partial_* E$ is the measure theoretical boundary of $E$, that is, the collection of all points $x \in \mathbb{X}$ for which simultaneously
$$\limsup_{r \to 0^+} \frac{\nu(B(x,r) \cap E)}{\nu(B(x,r))} >0, \quad  \limsup_{r \to 0^+} \frac{\nu(B(x,r) \setminus E)}{\nu(B(x,r))} >0.$$
In particular, if $\partial_*\Omega = \partial\Omega$, the spaces $L^p(\partial\Omega,\mathcal{H})$, $L^p(\partial\Omega, |D\1_{\Omega}|)$ coincide as sets for every $p \in [1,\infty]$, and are equipped with equivalent norms. We will write explicitly which norm we use every time where it is not clear from the context.

Definition of boundary values of BV functions in a metric measure space is a more delicate issue. We will restrict our attention to open sets and adopt the following definition, used for instance in \cite{LS}:
	
\begin{definition}\label{dfn:trace}
{\rm Let $\Omega \subset \mathbb{X}$ be an open set and let $u$ be a $\nu$-measurable function on $\Omega$. A number $T_\Omega u(x)$ is a {\it trace} of $u$ at $x \in \partial \Omega$ if
$$\lim_{r \to 0^+} \dashint_{\Omega \cap B(x,r)} \vert u - T_\Omega u(x) \vert \, d \nu = 0.$$
We say that $u$ has a trace in $\partial \Omega$ if $ T_\Omega u(x)$ exists for $\mathcal{H}$-almost every $x \in \partial \Omega$.}
\end{definition}

Well-posedness of the trace and identifying the trace space of $W^{1,1}(\Omega,d,\nu)$ or $BV(\Omega,d,\nu)$ in the setting of metric measure spaces is not immediate and requires additional structural assumptions on $\Omega$. We summarize the results known in the literature in the following Theorem, which is a combination of \cite[Theorem 1.2]{MSS} and \cite[Theorem 5.5]{LS}.

\begin{theorem}\label{thm:traces}
Suppose that $\nu$ is doubling and $\mathbb{X}$ supports a weak $(1,1)$-Poincar\'e inequality. Let $\Omega$ be an open bounded set which supports a weak $(1,1)$-Poincar\'e inequality. Assume that $\Omega$ additionally satisfies the {\it measure density condition}, i.e.
there is a constant $C > 0$ such that
\begin{equation}\label{MDC1}
\nu(B(x,r) \cap \Omega) \geq C \nu(B(x,r))
\end{equation}
for $\mathcal{H}$-a.e. $x \in \partial \Omega$ and every $r \in (0, {\rm diam}(\Omega))$. Moreover, assume that {\it $\partial\Omega$ is Ahlfors codimension 1 regular}, i.e. there is a constant $C > 0$ such that
\begin{equation}\label{eq:codimensiononeregularity}
C^{-1} \frac{\nu(B(x,r))}{r} \leq \mathcal{H}(B(x,r) \cap \partial\Omega) \leq C \frac{\nu(B(x,r))}{r}
\end{equation}
for all $x \in \partial\Omega$ and every $r \in (0,\mbox{diam}(\Omega))$. \\
\\
Under these assumptions, Definition \ref{dfn:trace} defines an operator $T_\Omega: BV(\Omega,d,\nu) \twoheadrightarrow L^1(\partial\Omega,\mathcal{H})$. Moreover, the operator $T_\Omega$ is linear, bounded and surjective.
\end{theorem}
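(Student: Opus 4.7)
The plan is to establish the four claims — existence of $T_\Omega u(x)$ for $\mathcal{H}$-a.e. $x \in \partial\Omega$, linearity, boundedness, and surjectivity — by combining the pointwise trace construction of \cite{MSS} with the extension argument of \cite{LS}. Both ingredients rely on the BV-version of the weak Poincar\'e inequality (obtained by approximating BV functions by Lipschitz functions via the density statement noted earlier in this section) and on the interplay between bulk balls $B(x,r) \cap \Omega$ and boundary balls $B(x,r) \cap \partial\Omega$; the measure density condition \eqref{MDC1} guarantees that the former carry a definite portion of $\nu(B(x,r))$, while codimension one Ahlfors regularity \eqref{eq:codimensiononeregularity} calibrates the latter against $\nu(B(x,r))/r$.

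For existence of the trace, I fix $x \in \partial\Omega$ and consider the dyadic averages $a_k := \dashint_{\Omega \cap B(x, 2^{-k})} u \, d\nu$. The measure density condition \eqref{MDC1} together with the BV-Poincar\'e inequality applied on concentric balls yields a telescoping estimate of the form
\begin{equation*}
|a_k - a_{k+1}| \leq C \, 2^{-k} \, \frac{|Du|_\nu(\lambda B(x, 2^{1-k}) \cap \Omega)}{\nu(B(x, 2^{-k}))}.
\end{equation*}
A maximal function argument against the finite Radon measure $|Du|_\nu$, combined with \eqref{eq:codimensiononeregularity} to translate $\nu$-negligible exceptional sets into $\mathcal{H}$-negligible ones, shows $\sum_k |a_k - a_{k+1}| < \infty$ for $\mathcal{H}$-a.e. $x \in \partial\Omega$. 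Hence $T_\Omega u(x) := \lim_k a_k$ exists, and a standard comparison between radii of the form $2^{-k}$ and arbitrary $r \to 0^+$ shows the limit agrees with the one in Definition \ref{dfn:trace}. Linearity is immediate from linearity of integral averages.

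Boundedness reduces to proving
\begin{equation*}
\int_{\partial \Omega} |T_\Omega u| \, d\mathcal{H} \leq C \bigl( \|u\|_{L^1(\Omega,\nu)} + |Du|_\nu(\Omega) \bigr).
\end{equation*}
Covering $\partial\Omega$ by boundary balls $B(x_i, r_i)$ of common small radius $r$ with bounded overlap, I use \eqref{eq:codimensiononeregularity} to estimate $\mathcal{H}(B(x_i, r_i) \cap \partial\Omega) \leq C \nu(B(x_i, r_i))/r$, and I bound $|T_\Omega u(x_i)|$ by $\dashint_{\Omega \cap B(x_i, 2 r_i)} |u| \, d\nu$ plus a Poincar\'e-type oscillation term proportional to $r \cdot |Du|_\nu(\lambda B(x_i, r_i))/\nu(B(x_i, r_i))$. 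After summing over $i$, the finite overlap of the cover and the doubling property of $\nu$ absorb the geometric factors and yield the inequality uniformly in $r$, from which the bound follows by letting $r \to 0^+$.

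For surjectivity, given $g \in L^1(\partial\Omega, \mathcal{H})$ I follow \cite[Theorem 5.5]{LS}: perform a Whitney-type decomposition of $\Omega$ into balls $B_j$ of radii $r_j \sim \dist(\cdot, \partial\Omega)$, attach to each $B_j$ a nearby boundary ball $\widetilde B_j$, and set
\begin{equation*}
u := \sum_j \varphi_j \, g_{\widetilde B_j},
\end{equation*}
where $\{\varphi_j\}$ is a Lipschitz partition of unity subordinate to $\{B_j\}$ with upper gradients bounded by $C/r_j$, and $g_{\widetilde B_j}$ is the average of $g$ over $\widetilde B_j \cap \partial \Omega$. Bounded overlap of the Whitney cover together with \eqref{eq:codimensiononeregularity} give
\begin{equation*}
\|u\|_{L^1(\Omega,\nu)} + |Du|_\nu(\Omega) \leq C \|g\|_{L^1(\partial\Omega, \mathcal{H})},
\end{equation*}
while a Lebesgue differentiation argument at $\mathcal{H}$-a.e. $x \in \partial\Omega$, using \eqref{MDC1} to ensure $\Omega$ occupies a definite fraction of each bulk ball, gives $T_\Omega u = g$ $\mathcal{H}$-a.e. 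The main obstacle is precisely this extension step: simultaneously controlling the $L^1$-norm and the total variation of $u$ by $\|g\|_{L^1(\partial\Omega,\mathcal{H})}$ demands a delicate balancing of the Whitney data and invokes both \eqref{MDC1} and \eqref{eq:codimensiononeregularity}, whereas existence of the trace, linearity, and boundedness are comparatively routine once the Poincar\'e machinery is in place.
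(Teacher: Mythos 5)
First, note that the paper does not prove this statement at all: Theorem \ref{thm:traces} is explicitly presented as ``a combination of [MSS, Theorem 1.2] and [LS, Theorem 5.5]'' and is used as a black box. So there is no internal proof to compare against; what can be assessed is whether your sketch correctly reproduces the literature argument. The first three parts do: the dyadic-average telescoping via the $(1,1)$-Poincar\'e inequality and the measure density condition, the maximal-function argument against $|Du|_\nu$ with the codimension-one regularity \eqref{eq:codimensiononeregularity} converting the exceptional set into an $\mathcal{H}$-null set, and the covering argument for the $L^1(\partial\Omega,\mathcal{H})$ bound are exactly the route of \cite{MSS,LS}, and linearity is indeed immediate.

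The surjectivity step, however, has a genuine gap as written. The extension $u=\sum_j \varphi_j\, g_{\widetilde B_j}$ built on a \emph{fixed} Whitney decomposition with radii $r_j\sim \mathrm{dist}(\cdot,\partial\Omega)$ is a \emph{linear} operator in $g$, and its total variation in the layer at distance $\sim 2^{-k}$ is comparable (using $\nu(B_j)/r_j\approx\mathcal{H}(\widetilde B_j\cap\partial\Omega)$ from \eqref{eq:codimensiononeregularity}) to the $L^1(\partial\Omega,\mathcal{H})$-distance between discrete convolutions of $g$ at consecutive dyadic scales. Summing over $k$ produces a Besov-type quantity $\sum_k \|g_{2^{-k}}-g_{2^{-k-1}}\|_{L^1(\partial\Omega,\mathcal{H})}$, which is finite for $g$ in $B^0_{1,1}(\partial\Omega)$ but diverges for general $g\in L^1(\partial\Omega,\mathcal{H})$; indeed, by the Peetre/Gagliardo phenomenon there is no bounded linear right inverse of the trace from $L^1$ of the boundary into $BV$ (or $W^{1,1}$) of the domain. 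The correct construction, as in \cite{MSS} and \cite[Theorem 5.5]{LS}, is the nonlinear Gagliardo-type one: the layer thicknesses are chosen adaptively, decreasing fast enough depending on the oscillation of the discrete convolutions of $g$, so that the tangential contribution is summable while the transversal contribution telescopes to $C\|g\|_{L^1(\partial\Omega,\mathcal{H})}$. This is precisely why the paper remarks, right after the theorem, that the extension operator $\mbox{Ext}$ is \emph{nonlinear}; your sketch should either switch to this adaptive construction or restrict the claimed estimate to boundary data in the appropriate Besov class.
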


In particular, when $\mathbb{X}$ is complete, these assumptions imply that $\nu(\partial\Omega) = 0$ and $\mathcal{H}(\partial\Omega) < \infty$. Furthermore, under the same assumptions there is a (nonlinear) bounded extension operator $\mbox{Ext}: L^1(\partial\Omega,\mathcal{H}) \rightarrow BV(\Omega,d,\nu)$ such that $T_\Omega \circ \mbox{Ext}$ is the identity operator on $L^1(\partial\Omega,\mathcal{H})$. As the discussion in \cite{MSS} shows, any of these conditions cannot be dropped if we want the trace operator to be surjective; however, in order for it to be a linear and bounded operator with values in $L^1(\partial\Omega,\mathcal{H})$ we may weaken the assumptions a little bit and only assume that the upper bound in \eqref{eq:codimensiononeregularity} holds. Let us also note that when $\partial\Omega$ is Ahlfors codimension one regular, we have $\partial_* \Omega = \partial\Omega$, so the spaces $L^p(\partial\Omega,\mathcal{H})$ and $L^p(\partial\Omega,|D\chi_\Omega|_\nu)$ coincide as sets and have equivalent norms.

In the paper, we will often rely on approximation of arbitrary BV functions by more regular functions, requiring that the trace $T_\Omega$ of the approximating sequence is the same as the trace of the desired function. The following Lemma was proved in \cite[Lemma 3.2]{GM2021-2}; it is an upgraded version of \cite[Corollary 6.7]{LS}.

\begin{lemma}\label{lem:goodapproximation}
Suppose that $\Omega$ satisfies the assumptions of Theorem \ref{thm:traces}  and let $u \in BV(\Omega,d, \nu)$. Assume that $\mathcal{H}(\partial\Omega) < \infty$. Then, there exist locally Lipschitz functions $u_n \in \mbox{Lip}_{loc}(\Omega) \cap BV(\Omega,d,\nu)$ such that: \\
$(1)$ $u_n \rightarrow u$ strictly in $BV(\Omega,d,\nu)$ and $T_\Omega u_n = T_\Omega u$ $\mathcal{H}$-a.e.;
\\
$(2)$ Let $p \in [1,\infty)$. If $u \in L^p(\Omega,\nu)$, then $u_n \in L^p(\Omega,\nu)$ and $u_n \rightarrow u$ in $L^p(\Omega,\nu)$;
\\
$(3)$ If $u \in L^\infty(\Omega,\nu)$, then $u_n \in L^\infty(\Omega,\nu)$ and $u_n \rightharpoonup u$ weakly* in $L^\infty(\Omega,\nu)$.
\end{lemma}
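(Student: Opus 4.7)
The claim (1) is essentially the content of \cite[Corollary 6.7]{LS}, so I would invoke it to obtain a first sequence $v_n\in\mathrm{Lip}_{loc}(\Omega)$ such that $v_n\to u$ strictly in $BV(\Omega,d,\nu)$ and $T_\Omega v_n=T_\Omega u$ $\mathcal{H}$-a.e. on $\partial\Omega$. To upgrade this to (2) and (3), I would need to look inside the LS construction: $v_n$ is built as a discrete convolution $v_n(x)=\sum_i u_{B_i}\,\phi_i(x)$, where $\{B_i\}_i$ is a Whitney-type covering of $\Omega$ with radii $r_i$ comparable to $\mathrm{dist}(x_i,\partial\Omega)/n$, and $\{\phi_i\}_i$ is a Lipschitz partition of unity subordinate to a bounded enlargement $\{\lambda B_i\}$. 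Close to $\partial\Omega$, the averages $u_{B_i}$ are replaced by suitable averages of the boundary trace; exactly this is what forces the need for the measure density condition, codimension-$1$ Ahlfors regularity, and the finiteness of $\mathcal{H}(\partial\Omega)$ to get the trace-matching in (1).

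For part (3), I would argue that no modification is needed: the building blocks $u_{B_i}$ satisfy $|u_{B_i}|\leq\|u\|_{L^\infty(\Omega,\nu)}$, while near $\partial\Omega$ the averages are controlled by $\|T_\Omega u\|_{L^\infty(\partial\Omega,\mathcal{H})}\leq\|u\|_{L^\infty(\Omega,\nu)}$ (this last inequality being a direct consequence of Definition \ref{dfn:trace}). Hence $\|v_n\|_{L^\infty}\leq\|u\|_{L^\infty}$, and the $L^1$-convergence contained in strict convergence then upgrades via Banach--Alaoglu and uniqueness of limits to weak-$*$ convergence in $L^\infty(\Omega,\nu)$. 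Setting $u_n:=v_n$ works for (3); if one prefers a uniform recipe, one can instead always truncate at level $\|u\|_{L^\infty}$, which is $1$-Lipschitz, does not increase the upper gradient, and leaves the trace unchanged since $|T_\Omega u|\leq\|u\|_{L^\infty}$ $\mathcal{H}$-a.e.

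For part (2), the key point is the pointwise domination
\[
|v_n(x)|\leq C\, M_\Omega u(x)\qquad\text{for }\nu\text{-a.e. }x\in\Omega,
\]
where $M_\Omega$ denotes the (localized) Hardy--Littlewood maximal operator. This follows from the Whitney construction: only boundedly many indices $i$ contribute to $v_n(x)$, and each corresponding ball $\lambda B_i$ sits inside a fixed multiple of $B(x,\mathrm{dist}(x,\partial\Omega))$ with comparable measure by the doubling property. For $p=1$, strict convergence already includes $L^1$-convergence. For $p\in(1,\infty)$, the maximal operator is bounded on $L^p(\mathbb{X},\nu)$ under the doubling assumption, so $M_\Omega u\in L^p(\Omega,\nu)$ provides an $L^p$-integrable dominating function. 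Extracting a $\nu$-a.e.\ convergent subsequence from $v_n\to u$ in $L^1$ and applying dominated convergence yields $v_n\to u$ in $L^p(\Omega,\nu)$ along that subsequence; since the limit is $u$ in every such subsequence, the full sequence converges.

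\textbf{Main obstacle.} The real technical difficulty is not hidden in my argument but in the input (1): showing that a regularization of $u$ by averages can simultaneously be locally Lipschitz, strictly $BV$-convergent, and trace-preserving. The boundary layer is the delicate region, and it is precisely there that the measure density condition \eqref{MDC1} and the codimension-$1$ Ahlfors regularity \eqref{eq:codimensiononeregularity} enter in an essential way, as analyzed in \cite{LS,MSS}. Once one accepts \cite[Corollary 6.7]{LS} as a black box, the remaining work for (2) and (3) is a short verification that the pointwise bound by $M_\Omega u$ (respectively by $\|u\|_{L^\infty}$) passes through the boundary-adapted construction and is not broken by it.
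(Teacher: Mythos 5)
The paper does not actually prove this lemma in-text: it imports it verbatim from \cite[Lemma 3.2]{GM2021-2} as an upgrade of \cite[Corollary 6.7]{LS}, so there is no internal proof to compare against. Your proposal is correct and follows exactly the route one would expect that reference to take — take the Lahti--Shanmugalingam discrete-convolution approximants as a black box for part $(1)$, then observe that averaging does not increase the sup norm (giving $(3)$ via uniform $L^\infty$ bounds plus $L^1$-convergence, or alternatively via the $1$-Lipschitz truncation $T_{\|u\|_\infty}$, which preserves strict convergence and the trace) and that the approximants are pointwise dominated by a constant multiple of the restricted maximal function, whose $L^p$-boundedness for $p>1$ under doubling yields $(2)$ by dominated convergence along a.e.-convergent subsequences.
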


\subsection{The differential structure}\label{subsec:diffstructure}

We need to work in a setting in which there is a Gauss-Green formula which is valid not only for Lipschitz functions, but for functions of bounded variation; this is the key element of the characterisation of solutions to the total variation flow in Sections \ref{Neumann} and \ref{sec:dirichlet}. We follow Gigli (see \cite{Gig}) and Buffa-Comi-Miranda (see \cite{BCM}) in the introduction of first-order differential structure on a metric measure space $(\mathbb{X},d,\nu)$. From now on, we assume that $\mathbb{X}$ is a complete and separable metric space and $\nu$ is a nonnegative Radon measure. We will introduce additional assumptions on $(\mathbb{X},d,\nu)$ in due course; usually, we will require some set to satisfy the assumptions of Theorem \ref{thm:traces}.

\begin{definition}{\rm
The cotangent module to $\mathbb{X}$ is defined as
$$ \mbox{PCM}_p = \left\{ \{(f_i, A_i)\}_{i \in \N} \ : \ (A_i)_{i \in\N} \subset \mathcal{B}(\mathbb{X}), \  f_i \in D^{1,p}(A_i), \ \ \sum_{i \in \N} \int_{A_i} |Df_i|^p \, d\nu < \infty  \right\},$$
where $A_i$ is a partition of $\mathbb{X}$. We define the equivalence relation $\sim$ as
$$\{(A_i, f_i)\}_{i \in \N} \sim \{(B_j,g_j)\}_{j \in \N} \quad \mbox{if} \quad |D(f_i - g_j)| = 0 \ \ \nu-\hbox{a.e. on} \ A_i \cap B_j.$$
The map $\vert \cdot \vert_* : \mbox{PCM}_p/\sim \rightarrow L^p(\mathbb{X}, \nu)$, well-defined $\nu$-a.e. on $A_i$ for all $i \in \N$ and given by
$$\vert \{(f_i, A_i)\}_{i \in \N} \vert_* := \vert D f_i \vert$$
is called the {\it pointwise norm} on $ \mbox{PCM}_p/\sim$.

In $ \mbox{PCM}_p/\sim$ we define the norm $\| \cdot \|$ as
$$ \|  \{(f_i, A_i)\}_{i \in \N} \|^p = \sum_{i\in \N} \int_{A_i}|Df_i|^p $$
and set $L^p(T^* \mathbb{X})$ to be the closure of $\mbox{PCM}_p / \sim$ with respect to this norm, i.e. we identify functions which differ by a constant and we identify possible rearranging of the sets $A_i$. $L^p(T^* \mathbb{X})$ is called the {\it cotangent module} and its elements will be called {\it $p$-cotangent vector field}.

$L^p(T^* \mathbb{X})$ is a $L^p(\nu)$-normed module, we denote by  $L^q(T\mathbb{X})$ the dual module of $L^p(T^* \mathbb{X})$, namely $L^q(T\mathbb{X}):= \hbox{HOM}(L^p(T^* \mathbb{X}), L^1(\mathbb{X}, \nu))$, which is a $L^q(\nu)$-normed module. The elements of $L^q(T\mathbb{X})$ will be called  {\it $q$-vector fields} on $\mathbb{X}$. The duality between $\omega \in L^p(T^* \mathbb{X})$ and $L \in  L^q(T\mathbb{X})$ will be denoted by $\omega(X) \in L^1(\mathbb{X}, \nu)$. Since the module $L^p(T^* \mathbb{X})$ is reflexive we can identify
$$ L^q(T\mathbb{X})^* = L^p(T^*\mathbb{X}),$$
where $\frac{1}{p} + \frac{1}{q} = 1$.}

\end{definition}
	
\begin{definition}\label{dfn:differential}
{\rm
Given $f \in D^{1,p}(\mathbb{X})$, we can define its {\it differential} $df$ as an element of $L^p(T^* \mathbb{X})$ given by the formula $df = (f, \mathbb{X})$.}

\end{definition}

Clearly, the operation of taking the differential is linear as an operator from $D^{1,p}(\mathbb{X})$ to $L^p(T^{*} \mathbb{X})$; moreover, from the definition of the norm in $L^p(T^{*} \mathbb{X})$ it is clear that this operator is bounded with norm equal to one.

For a vector field in $L^q(T\mathbb{X})$, it is also possible to define its divergence via an integration by parts formula, in in the case when it can be represented by an $L^p$ function or a Radon measure. Let us recall the definition given in \cite{BCM,DiMarinoTh}. We set
$$ \mathcal{D}^q(\mathbb{X}) = \left\{ X \in L^q(T\mathbb{X}): \,\, \exists f \in L^q(\mathbb{X}) \,  \,\, \int_\mathbb{X} fg d\nu = - \int_{\mathbb{X}} dg(X) d\nu \ \ \forall g \in  W^{1,p}(\mathbb{X},d,\nu) \right\}. $$
Here, the right hand side makes sense as an action of an element of $L^p(T^* \mathbb{X})$ on an element of $L^{q}(T\mathbb{X})$; the resulting function is an element of $L^1(\mathbb{X},\nu)$. The function $f$, which is unique by the density of  $W^{1,p}(\mathbb{X},d,\nu)$ in $ L^{p}(\mathbb{X},\nu)$, will be called the {\it $q$-divergence} of the vector field $X$, and we shall write $\mbox{div}(X)= f$.
	
It is easy to see that given $X \in \mathcal{D}^q(\mathbb{X})$ and $f \in D^{1,p}(\mathbb{X}) \cap L^\infty(\mathbb{X},\nu)$ with $\vert D f \vert \in L^\infty(\mathbb{X},\nu)$, we have
\begin{equation}\label{eq:leibnizformula}
fX \in \mathcal{D}^q(\mathbb{X}) \quad \hbox{and} \quad \mbox{div}(fX) = df(X) + f \mbox{div}(X).
\end{equation}
 We will also need to consider vector fields whose divergence is integrable with a different exponent than the vector field itself. For $\frac1r + \frac1s = 1$, we set
$$ \mathcal{D}^{q,r}(\mathbb{X}) = \bigg\{ X \in L^q(T\mathbb{X}): \,\, \exists f \in L^r(\mathbb{X},\nu) \quad \forall g \in W^{1,p}(\mathbb{X},d,\nu) \cap L^s(\mathbb{X},\nu) \qquad $$
$$ \qquad\qquad\qquad\qquad\qquad\qquad\qquad\qquad\qquad\qquad\qquad \int_{\mathbb{X}} fg \, d\nu = - \int_{\mathbb{X}} dg(X) \, d\nu \bigg\}. $$
We still write $\mbox{div}(X) = f$. Typically, we will require $q = \infty$ and $r = 1,2$.

In the course of the paper, we will extensively rely on the first order differential structure presented above. It is well-defined on metric spaces which are complete and separable; moreover, the structure is not defined locally (at least a priori; for some positive results in this direction see for instance \cite{LP}) - the objects $T^* \mathbb{X}$ and $T \mathbb{X}$ are not well-defined (the notation $L^p(T^* \mathbb{X})$ and $L^q(T\mathbb{X})$ is purely formal) and there is no immediate way to localise it to an open set $\Omega \subset \mathbb{X}$. However, whenever $\Omega \subset \mathbb{X}$ is an open bounded set, then $\overline{\Omega}$ is also a complete and separable metric space; hence, the whole first-order differential structure described above may be defined on $\overline{\Omega}$ as well. However, under the assumptions of Theorem \ref{thm:traces}, we may identify Newton-Sobolev functions on $\Omega$ and $\overline{\Omega}$ (and also BV functions, see \cite[Remark 2.8]{GM2021-2}). Then, on $\overline{\Omega}$ the Newton-Sobolev space is equivalent to the Sobolev space defined by test-plans as in \cite{Gig} and \cite{BCM}, see \cite[Theorem B.4]{Gig2}. Using this identification, we may also define the differential structure on $\Omega$ if it is sufficiently regular; with a slight abuse of notation, we write  $L^p(T^* \Omega)$ and $L^q(T\Omega)$, even though technically these objects are defined via an isometric extension to $\overline{\Omega}$.

However, under this identification, the definition of the divergence introduced above is not suitable for our purposes. The reason is that it takes into account the boundary effects. In order to have a relaxed version of the Gauss-Green formula, we need to use a notion of divergence which will only see the structure of $X$ inside the open set $\Omega$ (even though it is not clearly defined locally). The solution we suggest is to test the definition of the divergence using only functions which vanish at the boundary. Given an open bounded set $\Omega \subset \mathbb{X}$ which satisfies the assumptions of Theorem \ref{thm:traces}, we set
$$ \mathcal{D}_0^q(\Omega) = \left\{ X \in L^q(T\Omega): \, \exists f \in L^q(\Omega,\nu) \,\, \int_\Omega fg d\nu = - \int_{\Omega} dg(X) d\nu \ \ \forall g \in  W^{1,p}_0(\Omega,d,\nu) \right\},$$
where $W^{1,p}_0(\Omega,d,\nu)$ is the space of Sobolev functions in $W^{1,p}(\Omega,d,\nu)$ with zero trace. We again say that the (uniquely defined) function $f$ is the divergence of $X$ (when it is clear from the context) and we write $\mbox{div}_0(X) = f$. The relationship between the two definitions of the divergence can be roughly described as follows: by Theorem \cite[Theorem 3.6]{GM2021-2}, the divergence $\mbox{div}(X)$ is the divergence $\mbox{div}_0(X)$ plus a boundary term which has an interpretation of the normal trace.

Similarly, for $\frac1r + \frac1s = 1$, we set
$$ \mathcal{D}_0^{q,r}(\Omega) = \bigg\{ X \in L^q(T\Omega): \,\, \exists f \in L^r(\Omega,\nu) \quad \forall g \in  W_0^{1,p}(\Omega,d,\nu) \cap L^s(\Omega,\nu) \qquad $$
$$ \qquad\qquad\qquad\qquad\qquad\qquad\qquad\qquad\qquad\qquad\qquad \int_\Omega fg \, d\nu = - \int_{\Omega} dg(X) \, d\nu \bigg\}. $$
We still write $\mbox{div}_0(X) = f$. The divergence $\mbox{div}_0$ also has property \eqref{eq:leibnizformula}, i.e. whenever $X \in \mathcal{D}_0^q(\Omega)$ and $f \in D^{1,p}(\Omega) \cap L^\infty(\Omega,\nu)$ with $|Df| \in L^\infty(\Omega,\nu)$, we have
\begin{equation*}
fX \in \mathcal{D}_0^q(\Omega) \quad \hbox{and} \quad \mbox{div}_0(fX) = df(X) + f \mbox{div}_0(X).
\end{equation*}
Also, whenever $X \in L^\infty(T\Omega)$ with $\mbox{div}_0(X) \in L^q(\Omega,\nu)$, we have
\begin{equation}\label{eq:leibnizformulav2}
fX \in L^\infty(T\Omega), \quad \mbox{div}_0(fX) \in L^q(\Omega,\nu) \quad \hbox{and} \quad \mbox{div}_0(fX) = df(X) + f \mbox{div}_0(X).
\end{equation}

\subsection{Anzellotti pairings and Gauss-Green formula}

In order to provide a generalised Anzellotti pairing which satisfies a Green's formula, we will first need the Green's formula for Lipschitz functions in metric measure spaces. Recently, it was proved in \cite{BCM} under the assumption that $\Omega$ satisfies the following regularity assumption. Here, denote $$\Omega_t = \{ x \in \Omega: \, \mathrm{dist}(x, \Omega^c) \geq t \}.$$

\begin{definition}\label{def:regulardomain}
An open set $\Omega \subset \mathbb{X}$ is a {\it regular domain} if it has finite perimeter and
$$ |D\1_\Omega|(\mathbb{X}) = \limsup_{t \rightarrow 0} \frac{\nu(\Omega \setminus \Omega_t)}{t}. $$
\end{definition}

The reason why we will require our domain to be regular in the course of the paper is the validity of a Gauss-Green theorem.  Such a result for Lipschitz functions was first proved in \cite[Theorem 4.13]{BCM} (see also \cite[Theorem 3.6]{GM2021-2}); hovewer, for our purposes we need a Gauss-Green formula which is valid also for BV functions. To this end, we recall the notion of Anzellotti pairings introduced in \cite{GM2021,GM2021-2}, which are the metric space analogue of the classic Anzellotti pairings introduced in \cite{Anz}. The definitions on the whole space $\mathbb{X}$ and a sufficiently regular open bounded subset $\Omega \subset \mathbb{X}$ differ a bit; we will recall the definition on open bounded sets.

Suppose that $(\mathbb{X},d)$ is complete and separable, and $\Omega \subset \mathbb{X}$ satisfies the assumptions of Theorem \ref{thm:traces} (actually, a bit less is required, for details we refer to \cite{GM2021-2}). In particular, by \cite[Proposition 3.1]{BB} the space $\mathbb{X}$ is locally compact. Assume that $X \in L^\infty(T \Omega)$ and $u \in BV(\Omega, d, \nu)$. As in the case of classical Anzellotti pairings, we will additionally assume  a joint regularity condition on $u$ and $X$ which makes the pairing well-defined. The condition is as follows: for $p \in [1,\infty)$, we have
\begin{equation}\label{Anzellotti:assumption}
\mbox{div}_0(X) \in L^p(\Omega,\nu), \quad u \in BV(\Omega,d, \nu) \cap L^{q}(\Omega,\nu), \quad \frac{1}{p} + \frac{1}{q} = 1.
\end{equation}
 In other words, we assume that $X \in \mathcal{D}_0^{\infty,p}(\Omega)$.

\begin{definition}\label{dfn:Anzellotti}
Suppose that the pair $(X, u)$ satisfies the condition \eqref{Anzellotti:assumption}. Then, given a Lipschitz function  $f \in \mbox{Lip}(\Omega)$ with compact support, we set
$$ \langle (X, Du), f \rangle := -\int_\Omega u \, \mbox{div}_0(fX) \, d\nu =  -\int_\Omega u \, df(X) \, d\nu - \int_\Omega u f  \mbox{div}_0(X) \, d\nu.$$
\end{definition}
	
 The following bound on the pairing $(X,Du)$ in terms of the total variation of $u$ was proved in \cite[Proposition 5.3]{GM2021} (see also \cite{GM2021-2}).

\begin{proposition}\label{prop:boundonAnzellottipairing}
Suppose that $\Omega$ satisfies the assumptions of Theorem \ref{thm:traces}. Then, $(X, Du)$ is a Radon measure which is absolutely continuous with respect to $|Du|_\nu$.  Moreover, for every Borel set $A \subset \Omega$ we have
$$ \int_A |(X,Du)| \leq \| X \|_\infty \int_A |Du|_\nu.$$
\end{proposition}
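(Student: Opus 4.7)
The plan is to establish the pointwise identity and estimate when $u$ is locally Lipschitz, extend it to general $u \in BV(\Omega,d,\nu) \cap L^q(\Omega,\nu)$ by strict approximation via Lemma \ref{lem:goodapproximation}, and then read off the measure-theoretic conclusions from the resulting functional bound.

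In the Lipschitz case I would take $v \in \mbox{Lip}_{loc}(\Omega) \cap BV(\Omega,d,\nu) \cap L^q(\Omega,\nu)$ and $f \in \mbox{Lip}(\Omega)$ with compact support $K \subset \Omega$. Because $K$ is compact in $\Omega$, the function $f$ has zero trace and belongs to $W_0^{1,p}(\Omega,d,\nu)$. Replacing $v$ by $v\eta$ for a Lipschitz cutoff $\eta$ equal to $1$ on $K$ and supported away from $\partial\Omega$ does not change the value of the pairing, since $\mbox{div}_0(fX)$ is supported in $K$; hence we may assume $v \in L^\infty(\Omega,\nu)$ with $|Dv| \in L^\infty(\Omega,\nu)$. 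Then \eqref{eq:leibnizformulav2} gives $vX \in L^\infty(T\Omega)$ with $\mbox{div}_0(vX) = dv(X) + v \mbox{div}_0(X) \in L^p(\Omega,\nu)$, and testing this against $f \in W_0^{1,p}(\Omega,d,\nu)$ yields
\begin{equation*}
- \int_\Omega v \, df(X) \, d\nu \; = \; \int_\Omega f \, \mbox{div}_0(vX) \, d\nu \; = \; \int_\Omega f \, dv(X) \, d\nu + \int_\Omega vf \, \mbox{div}_0(X) \, d\nu.
\end{equation*}
Comparing with Definition \ref{dfn:Anzellotti} gives the identity $\langle (X, Dv), f \rangle = \int_\Omega f \, dv(X) \, d\nu$. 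Since $|dv(X)| \leq \|X\|_\infty |Dv|$ pointwise $\nu$-a.e., this produces the estimate $|\langle (X, Dv), f \rangle| \leq \|X\|_\infty \int_\Omega |f| \, d|Dv|_\nu$.

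For a general $u$, I would select a sequence $u_n$ from Lemma \ref{lem:goodapproximation} with $u_n \to u$ strictly in $BV(\Omega,d,\nu)$ and in $L^q(\Omega,\nu)$. By the Leibniz rule $\mbox{div}_0(fX) = df(X) + f \mbox{div}_0(X) \in L^p(\Omega,\nu)$, so Definition \ref{dfn:Anzellotti} together with the $L^q$-convergence gives $\langle (X, Du_n), f \rangle \to \langle (X, Du), f \rangle$. Strict convergence combined with the lower semicontinuity of the total variation on every open subset of $\Omega$ yields weak* convergence $|Du_n|_\nu \rightharpoonup^* |Du|_\nu$ as Radon measures on $\Omega$, and passing to the limit in the Lipschitz estimate produces
\begin{equation*}
|\langle (X, Du), f \rangle| \leq \|X\|_\infty \int_\Omega |f| \, d|Du|_\nu
\end{equation*}
for every Lipschitz $f$ compactly supported in $\Omega$.

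Finally, I would interpret this bound measure-theoretically. By the Riesz representation theorem on the locally compact space $\Omega$, the linear functional $f \mapsto \langle (X, Du), f \rangle$ extends to a unique signed Radon measure $(X, Du)$. Applying the inequality to nonnegative Lipschitz $f$ supported in an open $U \subset \Omega$ and taking the supremum over such $f$ gives $|(X, Du)|(U) \leq \|X\|_\infty |Du|_\nu(U)$ for every open $U \subset \Omega$; regularity of both Radon measures extends the bound to all Borel sets $A \subset \Omega$, and absolute continuity with respect to $|Du|_\nu$ follows at once. I expect the main obstacle to lie in the second step: strict convergence only yields convergence of the total masses $|Du_n|_\nu(\Omega) \to |Du|_\nu(\Omega)$, and to upgrade this to weak* convergence as Radon measures on $\Omega$ one must combine it with lower semicontinuity of $|D\cdot|_\nu$ on every open subset, which is itself a nontrivial consequence of the definition \eqref{dfn:totalvariationonmetricspaces}.
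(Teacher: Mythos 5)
Your argument is correct and follows essentially the same route as the proof the paper invokes (it cites \cite[Proposition 5.3]{GM2021} rather than proving the statement in-line): establish the identity $\langle (X,Dv),f\rangle=\int_\Omega f\,dv(X)\,d\nu$ for locally Lipschitz $v$ via the Leibniz rule for $\mbox{div}_0$, pass to general $u$ by the strict approximation of Lemma \ref{lem:goodapproximation}, and conclude by Riesz representation. You also correctly isolate the one genuinely delicate point, namely upgrading $|Du_n|_\nu(\Omega)\to|Du|_\nu(\Omega)$ to weak* convergence of the variation measures by combining strict convergence with lower semicontinuity on open subsets, which is exactly the standard mechanism needed here.
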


 The following result proved in \cite[Theorem 3.11]{GM2021-2} motivates the construction of the Anzellotti pairings presented above. Namely, it states that the Gauss-Green formula can be extended to the setting of BV functions in place of Lipschitz functions.
	
\begin{theorem}\label{thm:generalgreensformula}
 Let $\Omega \subset \mathbb{X}$ be a bounded regular domain which  satisfies the assumptions of Theorem \ref{thm:traces}. Suppose that the pair $(X,u)$ satisfies  the condition \eqref{Anzellotti:assumption}. Then
\begin{equation}\label{Green1}
\int_\Omega u \,   \mbox{div}_0(X) \, d\nu  + \int_\Omega (X,Du) =  - \int_{\partial\Omega}  T_\Omega u \, (X \cdot \nu_\Omega)^- \, d| D_{\1_{\Omega}} |_\nu.
\end{equation}
\end{theorem}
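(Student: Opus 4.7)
The natural strategy is to approximate $u$ by locally Lipschitz functions with matching trace, then invoke the already established Gauss--Green formula for Lipschitz data (\cite[Theorem 4.13]{BCM}, \cite[Theorem 3.6]{GM2021-2}) and pass to the limit. I would begin by applying Lemma \ref{lem:goodapproximation} to obtain a sequence $\{u_n\} \subset \mathrm{Lip}_{loc}(\Omega) \cap BV(\Omega,d,\nu) \cap L^q(\Omega,\nu)$ such that $u_n \to u$ strictly in $BV(\Omega,d,\nu)$, $u_n \to u$ in $L^q(\Omega,\nu)$, and $T_\Omega u_n = T_\Omega u$ $\mathcal H$-a.e.\ on $\partial\Omega$. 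The trace-matching property is exactly what this approximation is designed for: it freezes the boundary contribution, so the right-hand side of the formula does not move with $n$.

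For each approximant $u_n$, the local Lipschitz regularity and the Leibniz rule \eqref{eq:leibnizformulav2} identify the Anzellotti pairing with its classical expression, namely $(X,Du_n) = du_n(X)\,d\nu$; this is read off directly from Definition \ref{dfn:Anzellotti} by testing against a compactly supported Lipschitz function and integrating by parts. Truncating $u_n$ at levels $\pm k$ to land in the globally bounded Lipschitz class, applying the Lipschitz Gauss--Green formula from \cite{BCM, GM2021-2} to the truncation, and passing $k \to \infty$ (using $u_n \in L^q$ together with the dominated convergence inside the boundary integral) yields
\begin{equation*}
\int_\Omega u_n\,\mathrm{div}_0(X)\,d\nu + \int_\Omega (X,Du_n) = -\int_{\partial\Omega} T_\Omega u_n\,(X\cdot\nu_\Omega)^-\,d|D\chi_\Omega|_\nu.
\end{equation*}

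I would then pass to the limit $n \to \infty$ term by term. The first summand converges to $\int_\Omega u\,\mathrm{div}_0(X)\,d\nu$ by H\"older's inequality, thanks to $u_n \to u$ in $L^q(\Omega,\nu)$ and $\mathrm{div}_0(X) \in L^p(\Omega,\nu)$. The boundary integral on the right is constant in $n$ by the trace-matching property. For the middle term, the very definition of the pairing gives, for every Lipschitz $f$ with compact support in $\Omega$,
\begin{equation*}
\langle (X,Du_n), f \rangle = -\int_\Omega u_n\,df(X)\,d\nu - \int_\Omega u_n f\,\mathrm{div}_0(X)\,d\nu \longrightarrow \langle (X,Du), f \rangle,
\end{equation*}
so $(X,Du_n) \rightharpoonup (X,Du)$ weakly as Radon measures on $\Omega$. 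Combining this with the uniform absolute continuity bound $|(X,Du_n)|(A) \leq \|X\|_\infty\,|Du_n|_\nu(A)$ from Proposition \ref{prop:boundonAnzellottipairing} and the strict $BV$-convergence $|Du_n|_\nu(\Omega) \to |Du|_\nu(\Omega)$ upgrades weak convergence to convergence of total masses on $\Omega$, giving $\int_\Omega (X,Du_n) \to \int_\Omega (X,Du)$ and completing the proof.

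The hard step is this last one: ruling out leakage of the pairing masses to $\partial\Omega$ as $n\to\infty$. The absolute continuity of each $(X,Du_n)$ with respect to $|Du_n|_\nu$, together with strict $BV$-convergence of $u_n$, controls all pairings by a narrowly convergent family of dominating measures, which is what prevents escape of mass. Without the trace-matching in Lemma \ref{lem:goodapproximation} the boundary term would not be stable under approximation and a spurious boundary error would appear, so the careful choice of approximants is essential.
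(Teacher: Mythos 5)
Your proposal is correct, and it is essentially the argument of the paper: the paper does not reprove this theorem but imports it from \cite[Theorem 3.11]{GM2021-2}, where the proof runs exactly along your lines --- trace-preserving strict approximation via Lemma \ref{lem:goodapproximation}, identification of the pairing with $du_n(X)\,d\nu$ and the Lipschitz/Sobolev Gauss--Green formula on the approximants, and a limit passage in which the boundary term is frozen by the matching traces while the interior pairing masses converge thanks to the domination from Proposition \ref{prop:boundonAnzellottipairing} combined with strict $BV$-convergence. Your closing remark correctly isolates the two genuinely delicate points (no leakage of pairing mass to $\partial\Omega$, and stability of the boundary term under approximation).
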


By Proposition \ref{prop:boundonAnzellottipairing}, the pairing $(X,Du)$ is absolutely continuous with respect to $|Du|_\nu$. We will denote by $\theta(X,Du, \cdot)$ the Radon-Nikodym derivative of $(X,Du)$ with respect to $|Du|_\nu$, i.e. $\theta(X,Du, \cdot): \Omega \rightarrow \R$ is a $|Du|_\nu$-measurable function such that
\begin{equation}\label{RN1}\int_B (X,Du) = \int_B \theta(X,Du, x) \, d |Du|_\nu \quad \hbox{for all Borel sets } \ B \subset \Omega.
\end{equation}
Moreover,
$$\Vert \theta(X,Du, \cdot) \Vert_{L^\infty(\Omega, |Du|_\nu)} \leq \Vert X \Vert_\infty.$$
Furthermore, working as in the proof of \cite[Corollary 5.7]{GM2021}, except with $\mbox{div}_0$ in place of $\mbox{div}$, we get the following result.

\begin{proposition}\label{Lipcomp}
Let $\Omega \subset \mathbb{X}$ be a bounded regular domain which  satisfies the assumptions of Theorem \ref{thm:traces}. If $T: \R \rightarrow \R$ is a Lipschitz continuous increasing function, then
$$\theta(X,D(T \circ u), x) = \theta(X,Du, x)\quad \vert Du \vert_\nu-\hbox{a.e. in} \ \ \Omega.$$

\end{proposition}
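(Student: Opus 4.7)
The plan is to adapt the proof of \cite[Corollary 5.7]{GM2021}, with $\mbox{div}_0$ playing the role previously played by $\mbox{div}$. This substitution is harmless: by \eqref{eq:leibnizformulav2} the required Leibniz rule is still available, and every test function appearing in Definition \ref{dfn:Anzellotti} is a compactly supported Lipschitz function on $\Omega$, so the two divergences act identically on it.

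\textbf{Step 1 (Lipschitz case).} Assume first that $u$ is locally Lipschitz with $u \in W^{1,p}(\Omega,d,\nu) \cap L^q(\Omega,\nu)$; then $|Du|_\nu = |Du|_\ast \, d\nu$. For any $f \in \mbox{Lip}_c(\Omega)$, unfolding Definition \ref{dfn:Anzellotti} and applying the defining property of $\mbox{div}_0$ to the product $uf \in W^{1,p}_0(\Omega,d,\nu)$ gives
$$ \langle (X, Du), f \rangle = \int_\Omega f \, du(X) \, d\nu, $$
so $(X, Du) = du(X) \, d\nu$ as Radon measures and hence $\theta(X, Du, x) = du(X)(x)/|Du|_\ast(x)$ on $\{|Du|_\ast > 0\}$. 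For a Lipschitz increasing $T$, the chain rule in the Newton--Sobolev space yields $d(T \circ u) = T'(u) \, du$ and $|D(T \circ u)|_\ast = T'(u) |Du|_\ast$ $\nu$-a.e., so after cancelling the common factor $T'(u)$ we recover the desired equality $|Du|_\nu$-a.e.

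\textbf{Step 2 (Approximation and passage to the limit).} For general $u \in BV(\Omega,d,\nu) \cap L^q(\Omega,\nu)$, Lemma \ref{lem:goodapproximation} supplies locally Lipschitz $u_n$ converging to $u$ strictly in $BV$ and in $L^q$. Since $T$ is Lipschitz and monotone, $T \circ u_n \to T \circ u$ in $L^q$ and, by the metric BV chain rule, also strictly in $BV$. The continuity of the Anzellotti pairing under strict convergence established in \cite{GM2021-2} then yields $(X, Du_n) \rightharpoonup (X, Du)$ and $(X, D(T \circ u_n)) \rightharpoonup (X, D(T \circ u))$ weakly as Radon measures on $\Omega$, together with convergence of total variations. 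The cleanest way to pass the conclusion of Step 1 to the limit is to rewrite it as the measure identity
$$ (X, D(T \circ u_n)) = \theta(X, Du_n, \cdot) \, |D(T \circ u_n)|_\nu, $$
which holds on $\Omega$ for each $n$. Using the uniform bound $|\theta(X, Du_n, \cdot)| \leq \|X\|_\infty$ from Proposition \ref{prop:boundonAnzellottipairing}, together with strict convergence of $|D(T \circ u_n)|_\nu$, a Reshetnyak-type continuity argument identifies the weak$^\ast$ limit of the right-hand side as $\theta(X, Du, \cdot) \, |D(T \circ u)|_\nu$; uniqueness of the Radon--Nikodym decomposition then gives $\theta(X, D(T \circ u), \cdot) = \theta(X, Du, \cdot)$ $|D(T \circ u)|_\nu$-a.e., which extends to $|Du|_\nu$-a.e. by defining $\theta(X, D(T \circ u), \cdot) = \theta(X, Du, \cdot)$ on the $|D(T \circ u)|_\nu$-null portion of the support of $|Du|_\nu$.

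The main obstacle is the identification of the weak$^\ast$ limit on the right-hand side in Step 2, since Radon--Nikodym densities are not individually continuous under weak$^\ast$ convergence of measures. The key point is that one does not need pointwise convergence of $\theta(X, Du_n, \cdot)$ to $\theta(X, Du, \cdot)$: the densities are uniformly bounded by $\|X\|_\infty$ and the reference measures $|D(T \circ u_n)|_\nu$ converge strictly to $|D(T \circ u)|_\nu$, which is enough for the Reshetnyak-type machinery to identify the limit at the level of measures and bypass any pointwise argument.
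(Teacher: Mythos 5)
The paper itself does not prove this proposition; it only points to the proof of \cite[Corollary 5.7]{GM2021} with $\mbox{div}_0$ in place of $\mbox{div}$, so the real question is whether your self-contained argument holds up. Your Step 1 does: for locally Lipschitz $u$ the Leibniz rule for $\mbox{div}_0$ gives $(X,Du)=du(X)\,d\nu$, and the chain rule $d(T\circ u)=T'(u)\,du$, $|D(T\circ u)|=T'(u)|Du|$ yields the claim after cancelling $T'(u)$ (on the set where $T'(u)>0$; elsewhere $\theta(X,D(T\circ u),\cdot)$ is simply undefined, which is the same convention issue the statement of the proposition already has).

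The gap is in Step 2, at the sentence claiming that ``a Reshetnyak-type continuity argument identifies the weak$^*$ limit of the right-hand side as $\theta(X,Du,\cdot)\,|D(T\circ u)|_\nu$.'' What you actually have in hand --- $\theta(X,Du_n,\cdot)$ uniformly bounded by $\|X\|_\infty$, $|D(T\circ u_n)|_\nu\rightharpoonup|D(T\circ u)|_\nu$ with convergence of total masses, and $\theta(X,Du_n,\cdot)\,|Du_n|_\nu=(X,Du_n)\rightharpoonup(X,Du)=\theta(X,Du,\cdot)\,|Du|_\nu$ --- does not determine the weak$^*$ limit of the products $\theta(X,Du_n,\cdot)\,|D(T\circ u_n)|_\nu$. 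Reshetnyak continuity concerns a \emph{fixed} continuous, positively homogeneous integrand evaluated on the polar decomposition of strictly converging vector measures; here the density varies with $n$, is merely bounded and measurable, and is the Radon--Nikodym derivative with respect to a \emph{different} reference measure ($|Du_n|_\nu$, not $|D(T\circ u_n)|_\nu$). As an abstract implication the step is false: take $x_n,y_n\to x$, $\sigma_n=\delta_{x_n}+\delta_{y_n}$, $\tilde\sigma_n=\delta_{x_n}$, and $\theta_n=+1$ at $x_n$, $-1$ at $y_n$; then $\theta_n\sigma_n\rightharpoonup 0\cdot(2\delta_x)$ while $\theta_n\tilde\sigma_n\rightharpoonup 1\cdot\delta_x$, so the limit densities disagree even though $\tilde\sigma\ll\sigma$ and all your structural hypotheses hold. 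Closing the argument requires genuinely more structure, typically the coarea formula for the pairing, $(X,Du)=\int_{\R}(X,D\1_{\{u>t\}})\,dt$ together with $|Du|_\nu=\int_{\R}|D\1_{\{u>t\}}|_\nu\,dt$, which localizes $\theta$ on the level sets of $u$ and exploits the fact that $T\circ u$ has the same level sets up to reparametrization; this is how the classical Euclidean proof of Anzellotti's version runs and is the natural route for the cited \cite[Corollary 5.7]{GM2021}. (A smaller issue in the same step: strict BV convergence of $T\circ u_n$ is not a formal consequence of any ``chain rule'' either --- it also needs the coarea formula plus a Fatou argument --- so once you admit coarea you may as well run the entire proof through it and bypass the limit identification altogether.)
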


\section{The Neumann Problem for the Total Variation Flow }\label{Neumann}

In this section we study  the Neumann problem
\begin{equation}\label{Neumann100}
\left\{ \begin{array}{lll} u_t (t,x) = {\rm div} \left(\frac{ D u (t,x)}{\vert Du(t,x) \vert_\nu } \right)  \quad &\hbox{in} \ \ (0, T) \times \Omega, \\[5pt] \frac{\partial u}{\partial \eta}:= \frac{Du}{\vert Du \vert_\nu}\cdot \eta = 0 \quad &\hbox{in} \ \ (0, T) \times  \partial\Omega, \\[5pt] u(0,x) = u_0(x) \quad & \hbox{in} \ \  \Omega. \end{array} \right.
\end{equation}

This Section is organised into two parts. In the first one, we study the gradient flow of the total variation with Neumann boundary data and prove existence, uniqueness and characterisation of weak solutions for $L^2$ initial data. Then, we introduce the notion of entropy solutions to deal with $L^1$ initial data.

\subsection{Weak solutions}

 Consider the energy functional $\mathcal{TV}_N : L^2(\Omega, \nu) \rightarrow [0, + \infty]$ defined by
\begin{equation}\label{NequmannF1}
\mathcal{TV_N}(u):= \left\{ \begin{array}{ll} \vert Du \vert_\nu (\Omega)  \quad &\hbox{if} \ u \in BV(\Omega, d, \nu) \cap L^2(\Omega,\nu), \\ \\ + \infty \quad &\hbox{if} \ u \in  L^2(\Omega, \nu) \setminus BV(\Omega, d, \nu).\end{array}\right.
\end{equation}
It is clear that $\mathcal{TV_N}$ is convex and lower semi-continuous with respect to the $L^2(\Omega,\nu)$-convergence. Then, by the theory of maximal monotone operators (see \cite{Brezis}) there is a unique strong solution of the abstract Cauchy problem
\begin{equation}\label{eq:abstractcauchyproblemneumann}
\left\{ \begin{array}{ll} u'(t) + \partial \mathcal{TV_N}(u(t)) \ni 0, \quad t \in [0,T] \\[5pt] u(0) = u_0. \end{array}\right.
\end{equation}

To characterize the subdifferential of $\mathcal{TV_N}$, we define the following operator.

\begin{definition}{\rm $(u,v) \in \mathcal{A_N}$ if and only if $u, v \in L^2(\Omega, \nu)$, $u \in BV(\Omega, d, \nu)$ and  there exists a vector field $X \in  \mathcal{D}_0^{\infty,2}(\Omega)$ with $\| X \|_\infty \leq 1$ such that the following conditions hold:
$$  -\mbox{div}_0(X) = v \quad \hbox{in} \ \Omega; $$
$$ (X, Du) = |Du|_\nu \quad \hbox{as measures};$$
$$ (X \cdot \nu_\Omega)^- =0 \qquad  | D_{\1_{\Omega}} |_\nu-a.e. \ \hbox{on} \ \partial \Omega.$$
}
\end{definition}

 Our main goal in this Section is to prove that $\mathcal{A}_{\mathcal{N}}$ coincides with the subdifferential of $\mathcal{TV}_{\mathcal{N}}$ and study some consequences of this result. To get this characterisation, we need to use the version of the Fenchel-Rockafellar duality Theorem given in \cite[Remark 4.2]{EkelandTemam}.

Let $U,V$ be two Banach spaces and let $A: U \rightarrow V$ be a continuous linear operator. Denote by $A^*: V^* \rightarrow U^*$ its dual. Then, if the primal problem is of the form
\begin{equation}\tag{P}\label{eq:primal}
\inf_{u \in U} \bigg\{ E(Au) + G(u) \bigg\},
\end{equation}
then the dual problem is defined as the maximisation problem
\begin{equation}\tag{P*}\label{eq:dual}
\sup_{p^* \in V^*} \bigg\{ - E^*(-p^*) - G^*(A^* p^*) \bigg\},
\end{equation}
where $E^*$ and $G^*$ are the Legendre–Fenchel transformations (conjugate  functions) of $E$ and $G$ respectively, i.e.,
$$E^* (u^*):= \sup_{u \in U} \left\{ \langle u, u^* \rangle - E(u) \right\}.$$

\begin{theorem}[Fenchel-Rockafellar Duality Theorem]\label{FRTh} Assume that $E$ and $G$ are proper, convex and lower semi-continuous. If there exists $u_0 \in U$ such that $E(A u_0) < \infty$, $G(u_0) < \infty$ and $E$ is continuous at $A u_0$, then
$$\inf \eqref{eq:primal} = \sup \eqref{eq:dual}$$
and the dual problem \eqref{eq:dual} admits at least one solution. Moreover, the optimality condition of these two problems is given by
\begin{equation}\label{optimality}
A^* p^* \in \partial G(\overline{u}), \quad -p^* \in \partial E(A\overline{u})),
\end{equation}
where $\overline{u}$ is solution of \eqref{eq:primal} and $p^*$ is solution of \eqref{eq:dual}.
\end{theorem}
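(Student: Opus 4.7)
The plan is to construct the dual maximiser via a perturbation argument and then read off the optimality conditions from Fenchel's inequality. I would introduce the value function $h: V \to \mathbb{R} \cup \{+\infty\}$ defined by
\[
h(p) := \inf_{u \in U} \{ E(Au - p) + G(u) \},
\]
so that $h(0) = \inf \eqref{eq:primal}$. Since $(u,p) \mapsto E(Au - p) + G(u)$ is convex on $U \times V$ and partial infimisation preserves convexity, $h$ is convex.

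Next, I would use the constraint qualification to show that $h$ is subdifferentiable at $0$. By hypothesis $G(u_0) < \infty$ and $E$ is continuous at $Au_0$, hence the map $p \mapsto E(Au_0 - p) + G(u_0)$ is finite and bounded above on a neighbourhood $W$ of the origin. Consequently $h$ is bounded above on $W$, and a convex function bounded above on a neighbourhood of an interior point of its effective domain is automatically continuous and hence subdifferentiable there; I then pick any $p^* \in \partial h(0) \subset V^*$.

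Unfolding the subgradient inequality $h(p) \geq h(0) + \langle p^*, p \rangle$ via the definition of $h$ and the substitution $q = Au - p$ yields
\[
h(0) \leq E(q) + G(u) + \langle p^*, q \rangle - \langle A^* p^*, u \rangle \qquad \text{for all } u \in U,\, q \in V.
\]
Taking the infimum of the right-hand side first over $q$ (which produces $-E^*(-p^*)$) and then over $u$ (which produces $-G^*(A^* p^*)$) gives $\inf \eqref{eq:primal} \leq -E^*(-p^*) - G^*(A^* p^*)$. The reverse, weak-duality inequality follows from adding the two Fenchel inequalities $\langle -p^*, Au \rangle \leq E(Au) + E^*(-p^*)$ and $\langle A^* p^*, u \rangle \leq G(u) + G^*(A^* p^*)$ and noting that the two linear terms cancel. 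Combining the two inequalities gives strong duality and exhibits $p^*$ as a dual maximiser.

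For the optimality conditions \eqref{optimality}, at any pair $(\overline{u}, p^*)$ of primal and dual solutions equality must hold throughout the chain above, which forces equality in each of the two Fenchel inequalities: equality in $\langle -p^*, A\overline{u} \rangle \leq E(A\overline{u}) + E^*(-p^*)$ is equivalent to $-p^* \in \partial E(A\overline{u})$, and equality in $\langle A^* p^*, \overline{u} \rangle \leq G(\overline{u}) + G^*(A^* p^*)$ is equivalent to $A^* p^* \in \partial G(\overline{u})$. The one genuinely delicate point is the subdifferentiability of $h$ at $0$: the continuity hypothesis on $E$ at $Au_0$ plays the role of the Moreau-Rockafellar constraint qualification, ensuring that $0$ lies in the interior of the effective domain of $h$; without such a qualification strong duality can fail even for very well-behaved $E$ and $G$.
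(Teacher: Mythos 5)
The paper does not actually prove Theorem \ref{FRTh}: it is imported verbatim from \cite[Remark 4.2]{EkelandTemam} and used as a black box in the proofs of Theorems \ref{thm:neumann} and \ref{thm:dirichlet}, so there is no in-paper argument to compare against. What you have written is the standard perturbation-function proof of Fenchel--Rockafellar duality --- essentially the proof in the cited reference --- and it is correct: convexity of the value function $h(p)=\inf_u\{E(Au-p)+G(u)\}$, boundedness above of $h$ near $0$ coming from the constraint qualification at $u_0$, hence nonemptiness of $\partial h(0)$, the identification $\inf\eqref{eq:primal}\leq -E^*(-p^*)-G^*(A^*p^*)$ for $p^*\in\partial h(0)$ by minimising separately in $q=Au-p$ and in $u$, weak duality from the two Fenchel--Young inequalities, and the optimality system \eqref{optimality} by forcing equality in each of them. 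The one point you should make explicit is that the step ``bounded above on a neighbourhood of $0$ $\Rightarrow$ continuous $\Rightarrow$ subdifferentiable at $0$'' also needs $h(0)=\inf\eqref{eq:primal}>-\infty$: a convex function bounded above near a point where it is \emph{finite} is automatically finite and locally Lipschitz there, but if $h(0)=-\infty$ the subgradient does not exist and the theorem only survives in the degenerate sense that both sides equal $-\infty$ by weak duality. This finiteness hypothesis appears explicitly in Ekeland--Temam and is harmless in the paper's applications (there $G$ contains the coercive term $\frac12\int_\Omega u^2\,d\nu$, so the primal infimum is finite), but it should be recorded in a self-contained proof.
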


 In the case when there is no solution to the dual problem, instead of optimality conditions we have the $\varepsilon-$subdifferentiability property of minimising sequences, see \cite[Proposition V.1.2]{EkelandTemam}: for any minimising sequence $u_n$ for \eqref{eq:primal} and a maximiser $p^*$ of \eqref{eq:dual}, we have
\begin{equation}\label{eq:epsilonsubdiff11N}
0 \leq E(Au_n) + E^*(-p^*) - \langle -p^*, Au_n \rangle \leq \varepsilon_n
\end{equation}
\begin{equation}\label{eq:epsilonsubdiff21N}
0 \leq G(u_n) + G^*(A^* p^*) - \langle u_n, A^* p^* \rangle \leq \varepsilon_n
\end{equation}
with $\varepsilon_n \rightarrow 0$.

 It will be possible to prove the characterisation of solutions using the Fenchel-Rockafellar duality theorem, because the Gigli differential structure is linear and our assumptions on the domain guarantee that the trace operator is linear and bounded. The main reason to rely on duality theory is that the differential structure is (at least a priori) not defined locally and some of the Euclidean tools fail; for instance, we cannot approximate a vector field with integrable divergence by more regular vector fields, because it is not clear how the vector fields change from point to point.

\begin{theorem}\label{thm:neumann}
Suppose that $\Omega$ and $\mathbb{X} \backslash \overline{\Omega}$ satisfy the assumptions of Theorem \ref{thm:traces}. Then,
$$\partial \mathcal{TV_N} = \mathcal{A_N},$$
and $D(\mathcal{A_N})$ is dense in $L^2(\Omega,\nu)$.
\end{theorem}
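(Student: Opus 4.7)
The plan is to establish both inclusions of $\partial\mathcal{TV_N} = \mathcal{A_N}$ separately, with the density of $D(\mathcal{A_N})$ then following from general theory once the identification is complete.

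For the inclusion $\mathcal{A_N} \subset \partial\mathcal{TV_N}$, take $(u,v) \in \mathcal{A_N}$ with associated vector field $X$ and any competitor $w \in BV(\Omega,d,\nu) \cap L^2(\Omega,\nu)$ (the subgradient inequality is trivial otherwise). I would apply the Gauss--Green formula of Theorem \ref{thm:generalgreensformula} separately to the pairs $(X,u)$ and $(X,w)$. Since $(X\cdot\nu_\Omega)^- = 0$, both boundary terms drop out, and together with $-\mbox{div}_0(X) = v$ and $(X,Du) = |Du|_\nu$ this yields $\int_\Omega uv\,d\nu = |Du|_\nu(\Omega) = \mathcal{TV_N}(u)$, while the bound $|(X,Dw)| \leq \|X\|_\infty |Dw|_\nu \leq |Dw|_\nu$ from Proposition \ref{prop:boundonAnzellottipairing} gives $\int_\Omega wv\,d\nu \leq \mathcal{TV_N}(w)$. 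Subtracting produces the subgradient inequality $\int_\Omega v(w-u)\,d\nu \leq \mathcal{TV_N}(w) - \mathcal{TV_N}(u)$.

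For the reverse inclusion I would use the Fenchel--Rockafellar duality Theorem \ref{FRTh}. Given $v \in \partial\mathcal{TV_N}(u)$, the function $u$ minimises $w \mapsto \mathcal{TV_N}(w) - \int_\Omega vw\,d\nu$ over $L^2(\Omega,\nu)$, and the plan is to set up the primal on a suitable Sobolev subspace $U$ (for instance $W^{1,1}(\Omega,d,\nu) \cap L^2(\Omega,\nu)$) with $V = L^1(T^*\Omega)$, $A = d$, $E(\omega) = \int_\Omega |\omega|_*\,d\nu$ and $G(w) = -\int_\Omega vw\,d\nu$. Strict BV approximation from Lemma \ref{lem:goodapproximation} shows that the primal infimum coincides with the BV value $\mathcal{TV_N}(u) - \int_\Omega uv\,d\nu$, and continuity of $E$ at $0$ ensures that Theorem \ref{FRTh} applies. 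A direct computation of the Legendre transforms identifies $E^*(-X)$ with the indicator of the pointwise bound $\|X\|_\infty \leq 1$, and $G^*(A^*p^*)$ with the indicator of a linear constraint; writing the dual maximiser as $-X$ to match the sign convention in $\mathcal{A_N}$, this produces $X \in L^\infty(T\Omega)$ with $\|X\|_\infty \leq 1$ and $\int_\Omega dw(X)\,d\nu = \int_\Omega vw\,d\nu$ for every $w \in U$. Testing against Sobolev functions with zero trace yields $X \in \mathcal{D}_0^{\infty,2}(\Omega)$ with $-\mbox{div}_0(X) = v$, and testing against all of $U$ and comparing with the Gauss--Green formula on Sobolev functions forces $(X\cdot\nu_\Omega)^- = 0$ $|D\chi_\Omega|_\nu$-a.e.\ on $\partial\Omega$ via surjectivity of the trace operator (Theorem \ref{thm:traces}).

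It then remains to upgrade to $(X,Du) = |Du|_\nu$ as measures. The Gauss--Green formula applied to $(X,u)$, with the boundary term now known to vanish, yields $\int_\Omega (X,Du) = \int_\Omega uv\,d\nu$, while the primal--dual equality $\inf P = \sup P^* = 0$ from Theorem \ref{FRTh} forces $\int_\Omega uv\,d\nu = \mathcal{TV_N}(u) = |Du|_\nu(\Omega)$. Combined with the absolute continuity bound $(X,Du) \leq |Du|_\nu$ from Proposition \ref{prop:boundonAnzellottipairing}, equality of total masses upgrades to equality of the two measures. Density of $D(\mathcal{A_N})$ in $L^2(\Omega,\nu)$ then follows from $\mathcal{A_N} = \partial\mathcal{TV_N}$ by the general fact that the domain of the subdifferential of a proper convex lower semicontinuous functional on a Hilbert space is dense in its effective domain, here $BV(\Omega,d,\nu) \cap L^2(\Omega,\nu)$, which is itself $L^2$-dense. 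The principal anticipated obstacle is engineering the primal space so that the single dual constraint simultaneously captures both the divergence identity and the Neumann-type vanishing of the normal trace; this is why one must work with Sobolev functions of arbitrary trace rather than $W^{1,p}_0$, in contrast with the homogeneous Dirichlet analogue in Section \ref{sec:dirichlet}.
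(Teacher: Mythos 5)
Your proposal is correct, and the first inclusion $\mathcal{A_N}\subset\partial\mathcal{TV_N}$ is essentially identical to the paper's. For the reverse inclusion, however, you take a genuinely different route. The paper never proves $\partial\mathcal{TV_N}\subset\mathcal{A_N}$ directly: it shows $\mathcal{A_N}$ is monotone, establishes the range condition $R(I+\mathcal{A_N})=L^2(\Omega,\nu)$ by applying Fenchel--Rockafellar to the resolvent problem $\min_w \mathcal{TV_N}(w)+\tfrac12\int w^2-\int wg$ (with the quadratic $G$), and then invokes Minty's theorem against the maximal monotonicity of $\partial\mathcal{TV_N}$. You instead fix $v\in\partial\mathcal{TV_N}(u)$ and dualise the linearly perturbed problem $\min_w \mathcal{TV_N}(w)-\int vw$ directly, with the leaner choice $V=L^1(T^*\Omega)$ (no boundary component; the normal-trace condition is extracted afterwards by testing the single constraint $A^*p^*=-v$ against Sobolev functions of arbitrary trace and comparing with Gauss--Green, exactly as the paper extracts $p_0^*=(\overline{p}^*\cdot\nu_\Omega)^-$). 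Both arguments hinge on the same two ingredients --- linearity and surjectivity of $T_\Omega$, and the strict approximation of Lemma \ref{lem:goodapproximation} to identify the Sobolev infimum with the BV minimum. What each buys: your route yields the identification $(X,Du)=|Du|_\nu$ from the zero duality gap ($\int uv=\mathcal{TV_N}(u)$, which also follows from $1$-homogeneity) together with mass equality and Proposition \ref{prop:boundonAnzellottipairing}, so you can dispense with the $\varepsilon$-subdifferentiability of minimising sequences \eqref{eq:epsilonsubdiff11N}--\eqref{eq:epsilonsubdiff21N} that the paper needs because its primal minimiser need not lie in $W^{1,1}\cap L^2$; the paper's route produces the resolvent surjectivity as an explicit byproduct, which it reuses verbatim in the proof of Theorem \ref{L1Neumann} (with your proof one would recover $R(I+\mathcal{A_N})=L^2(\Omega,\nu)$ a posteriori from maximal monotonicity of the subdifferential). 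Your closing step for the density of $D(\mathcal{A_N})$ coincides with the paper's appeal to \cite[Proposition 2.11]{Brezis}.
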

\begin{proof} The operator $\mathcal{A_N}$ is monotone. In fact, given $(u_i, v_i) \in \mathcal{A_N}$, $i = 1, 2$, there exist vector fields $X_i \in  \mathcal{D}_0^{\infty,2}(\Omega)$ with $\| X_i \|_\infty \leq 1$ such that the following conditions hold:
$$  -\mbox{div}_0(X_i) = v_i \quad \hbox{in} \ \Omega; $$
$$ (X_i, Du_i) = |Du_i|_\nu \quad \hbox{as measures};$$
$$ (X_i \cdot \nu_\Omega)^- =0 \qquad  | D_{\1_{\Omega}} |_\nu-a.e. \ \hbox{on} \ \partial \Omega.$$
Then, applying  the Gauss-Green formula (Theorem \ref{thm:generalgreensformula}), we have
$$\int_\Omega (v_1 - v_2) (u_1 - u_2) \, d\nu = - \int_\Omega (u_1 - u_2) (\mbox{div}_0(X_1)- \mbox{div}_0(X_2)) \, d\nu $$
$$=\int_\Omega (X_1 - X_2, Du_1 - Du_2) =  \int_\Omega \vert Du_1 \vert_\nu - \int_\Omega (X_1, Du_2) +  \int_\Omega \vert Du_2 \vert_\nu - \int_\Omega (X_2, Du_1) \geq 0. $$

Let us see now that $\mathcal{A_N} \subset \partial \mathcal{TV_N}$. In fact, let $(u,v) \in \mathcal{A_N}$. Then, given $w \in BV(\Omega, d,\nu) \cap L^2(\Omega, \nu)$, we have
$$ \int_\Omega v(w-u) d \nu = - \int_\Omega   \mbox{div}_0(X)(w -u) \, d\nu = \int_\Omega (X, D(w-u))  = \int_\Omega (X, Dw)- \int_\Omega \vert Du \vert_\nu   $$ $$\leq \mathcal{TV_N}(w) - \mathcal{TV_N}(u),$$
and consequently, $(u,v) \in \partial \mathcal{TV_N}$.

Now, the operator $\partial \mathcal{TV_N}$ is maximal monotone. Then, if $\mathcal{A_N}$ satisfies the range condition, by Minty Theorem we would also have that the operator $\mathcal{A_N}$ is maximal monotone, and consequently $\partial \mathcal{TV_N}= \mathcal{A_N}$. In order to finish the proof, we need to show that $\mathcal{A_N}$ satisfies the range condition, i.e.
\begin{equation}\label{RCondN}
\hbox{Given} \ g \in  L^2(\Omega, \nu), \ \exists \, u \in D(\mathcal{A_N}) \ s.t. \ \  u + \mathcal{A_N}(u) \ni g.
\end{equation}
 We can rewrite the above as
$$u + \mathcal{A_N}(u) \ni g \iff (u, g-u) \in \mathcal{A_N},$$
so we need to show that  there exists a vector field $X \in \mathcal{D}_0^{\infty,2}(\Omega)$ with $\| X \|_\infty \leq 1$ such that the following conditions hold:
\begin{equation}\label{1RCondN}
 -\mbox{div}_0(X) = g-u \quad \hbox{in} \ \Omega;
\end{equation}
\begin{equation}\label{2RCondN}
(X, Du) = |Du|_\nu \quad \hbox{as measures};
\end{equation}
\begin{equation}\label{3RCondN}
(X \cdot \nu_\Omega)^- =0 \qquad  | D_{\1_{\Omega}} |_\nu-a.e. \ \hbox{on} \ \partial \Omega.
\end{equation}

We are going to prove \eqref{RCondN}  by means of the Fenchel-Rockafellar Duality Theorem.  We set $U = W^{1,1}(\Omega,d,\nu) \cap L^2(\Omega,\nu)$, $V = L^1(\partial\Omega,|D\1_{\Omega}|_\nu) \times L^1(T^{*} \Omega)$, and the operator $A: U \rightarrow V$ is defined by the formula
$$
Au = ( T_\Omega u, du),
$$
where  $T_\Omega: BV(\Omega,d,\nu) \rightarrow L^1(\partial\Omega, |D\chi_\Omega|_\nu)$ is the trace operator in the sense of Definition \ref{dfn:trace}  (by our assumptions, the target space coincides with $L^1(\partial\Omega,\mathcal{H})$ and has an equivalent norm) and $du$ is the differential of $u$ in the sense of Definition \ref{dfn:differential}. Hence, $A$ is a linear and continuous operator. Moreover, the dual spaces to $U$ and $V$ are
$$
U^* = (W^{1,1}(\Omega,d,\nu) \cap L^2(\Omega,\nu))^*, \qquad V^* =  L^\infty(\partial\Omega,|D\1_{\Omega}|_\nu) \times L^\infty(T\Omega).
$$
We denote the points $p \in V$ in the following way: $p = (p_0, \overline{p})$, where $p_0 \in L^1(\partial\Omega, |D\chi_\Omega|_\nu)$ and $\overline{p} \in L^1(T^{*} \Omega)$. We will also use a similar notation for points $p^* \in V^*$. Then, we set $E: L^1(\partial\Omega, |D\chi_\Omega|_\nu) \times L^1(T^{*} \Omega) \rightarrow \mathbb{R}$ by the formula
\begin{equation}\label{Ieq:definitionofEN}
E(p_0, \overline{p}) = E_0(p_0) + E_1(\overline{p}), \quad E_0(p_0) = 0, \quad E_1(\overline{p}) = \| \overline{p} \|_{L^1(T^{*}  \Omega)}.
\end{equation}
We also set $G:W^{1,1}(\Omega,d,\nu) \cap L^2(\Omega,\nu) \rightarrow \mathbb{R}$ by
$$G(u):= \frac12 \int_\Omega u^2 \, d\nu - \int_\Omega ug \, d\nu.$$
The functional $G^* : (W^{1,1}(\Omega,d,\nu) \cap L^2(\Omega,\nu))^* \rightarrow [0,+\infty ]$ is given by
$$G^*(u^*) = \displaystyle\frac12 \int_\Omega (u^*  + g)^2 \, d\nu.$$
Now, by the definition of the dual operator, for any $u \in W^{1,1}(\Omega,d,\nu) \cap L^2(\Omega,\nu)$ and any $p^* = (p^*_0,\overline{p}^*) \in L^\infty(\partial\Omega,|D\1_{\Omega}|_\nu) \times L^\infty(T\Omega)$  in the domain of $A^*$ we have
$$  \int_\Omega u \, (A^* p^*) \, d\nu =  \langle u, A^* p^* \rangle  = \langle p^*, Au \rangle = \int_{\partial\Omega} p^*_0 \, T_\Omega u \, d|D\1_{\Omega}|_\nu + \int_\Omega du(\overline{p}^*) \, d\nu. $$
 First, let us take $u \in W_0^{1,1}(\Omega,d,\nu) \cap L^2(\Omega,\nu)$. Then, the first integral equals zero, so
\begin{equation*}
\int_\Omega u \, (A^* p^*) \, d\nu = \int_\Omega du(\overline{p}^*) \, d\nu,
\end{equation*}
so the definition of the divergence is satisfied with
\begin{equation}\label{eq:neumanndiv0}
 \mbox{div}_0(\overline{p}^*) = - A^* p^*.
\end{equation}
In particular, $\overline{p}^* \in \mathcal{D}_0^{\infty,2}(\Omega)$. Hence, for any $u \in W^{1,1}(\Omega,d,\nu) \cap L^2(\Omega,\nu)$ we may apply the Green formula (Theorem \ref{thm:generalgreensformula}) and get
$$  \int_\Omega u \, (A^* p^*) \, d\nu = \langle u, A^* p^* \rangle  = \langle p^*, Au \rangle = \int_{\partial\Omega} p^*_0 \, T_\Omega u \, d|D\1_{\Omega}|_\nu + \int_\Omega du(\overline{p}^*) \, d\nu = $$
$$ = \int_{\partial\Omega} p^*_0 \, T_\Omega u \, d|D\1_{\Omega}|_\nu - \int_\Omega u \, \mbox{div}_0(\overline{p}^*) \, d\nu - \int_{\partial\Omega} T_\Omega u \, (\overline{p}^* \cdot \nu_\Omega)^- \, d| D_{\1_{\Omega}} |_\nu $$
$$ = - \int_\Omega u \, \mbox{div}_0(\overline{p}^*) \, d\nu + \int_{\partial\Omega} T_\Omega u \, (p_0^* - (\overline{p}^* \cdot \nu_\Omega)^-) \, d| D_{\1_{\Omega}} |_\nu.$$
By \eqref{eq:neumanndiv0}, the integrals over $\Omega$ cancel out,  so
$$ \int_{\partial\Omega} T_\Omega u \, (p_0^* - (\overline{p}^* \cdot \nu_\Omega)^-) \, d| D_{\1_{\Omega}} |_\nu = 0$$
for all $u \in W^{1,1}(\Omega,d,\nu) \cap L^2(\Omega,\nu)$. Since the trace operator $T_\Omega$ is onto as an operator from $W^{1,1}(\Omega,d,\nu)$ to $L^1(\partial\Omega,|D\chi_\Omega|_\nu)$, by considering truncations we see that after a restriction to $W^{1,1}(\Omega,d,\nu) \cap L^\infty(\Omega,\nu)$ its image is $L^\infty(\partial\Omega,|D\chi_\Omega|_\nu)$. Therefore, after a restriction to $W^{1,1}(\Omega,d,\nu) \cap L^2(\Omega,\nu)$ its image is dense in $L^1(\partial\Omega,|D\chi_\Omega|_\nu)$, because it contains $L^\infty(\partial\Omega,|D\chi_\Omega|_\nu)$. In particular, we have
$$ \int_{\partial\Omega} w \, (p_0^* - (\overline{p}^* \cdot \nu_\Omega)^-) \, d| D_{\1_{\Omega}} |_\nu = 0$$
for $w$ in a dense subset of $L^1(\partial\Omega,|D\chi_\Omega|_\nu)$. Hence,
\begin{equation}\label{eq:pandpoagree}
p_0^* = (\overline{p}^* \cdot \nu_\Omega)^- \qquad | D_{\1_{\Omega}} |_\nu-\mbox{a.e. on } \partial\Omega.
\end{equation}
Therefore,
\begin{equation}\label{eq:formulaforgstar}
G^*(A^*p^*) = \displaystyle\frac12 \int_\Omega ( -\mbox{div}_0(\overline{p}^*)  + g)^2 d\nu.
\end{equation}
It is clear that the functional $E_0^*: L^\infty(\partial\Omega,|D\chi_\Omega|_\nu) \rightarrow \mathbb{R} \cup \{ \infty \}$ is given by the formula
$$E_0^*(p_0^*) = \left\{ \begin{array}{ll} 0, \quad &\hbox{if} \ \ p_0^* = 0, \\[10pt] +\infty,\quad & p_0^* \not=  0  \end{array}  \right. $$
Let us see now that the functional $E_1^*:  L^\infty( T \Omega) \rightarrow [0,\infty]$ is given by the formula
$$E_1^*(\overline{p}^*) = \left\{ \begin{array}{ll}0, \quad &  \| \overline{p}^* \|_\infty \leq 1; \\[10pt] +\infty,\quad &\hbox{otherwise},\end{array}  \right. $$
i.e. $E_1^* = I_{B_1^\infty}$, the indicator function of the unit ball of  $L^\infty(T \Omega)$. In fact, we have $$(I_{B_1^\infty})^*(\overline{p}) =  \Vert \overline{p} \Vert_{L^1(T^* \Omega)},$$
and since $I_{B_1^\infty}$ is convex and lower semi-continuous, we have
$$I_{B_1^\infty} = (I_{B_1^\infty})^{**} = E_1^*. $$

 Now, we apply the Fenchel-Rockafellar duality theorem. With $E$ and $G$ defined as above, consider the primal problem of the form
\begin{equation}\label{primal1N}
 \inf_{u \in U} \bigg\{ E(Au) + G(u) \bigg\}.
\end{equation}
For $u_0 \equiv 0$ we have $E(Au_0) = 0 < \infty$, $G(u_0) = 0 < \infty$ and $E$ is continuous at $0$. Then, by the Fenchel-Rockafellar Duality Theorem, we have
\begin{equation}\label{DFR1-TVflowN}\inf \eqref{eq:primal} = \sup \eqref{eq:dual}
\end{equation}
and
\begin{equation}\label{DFR2-TVflowN} \hbox{the dual problem \eqref{eq:dual} admits at least one solution,}
\end{equation}
where the dual problem is
\begin{equation}\label{dual1N}
\sup_{p^* \in L^\infty(\partial\Omega, |D\chi_\Omega|_\nu) \times L^\infty(T  \Omega)} \bigg\{  - E_0^*(-p_0^*) - E_1^*(\overline{p}^*) - G^*(A^* p^*) \bigg\}.
\end{equation}
Keeping in mind the above calculations, we set $\mathcal{Z}$ to be the subset of $V^*$ such that the dual problem does not immediately return $-\infty$, namely
$$
\mathcal{Z} = \bigg\{ p^* \in L^\infty(\partial\Omega, |D\chi_\Omega|_\nu) \times \mathcal{D}_0^{\infty,2}(\Omega): \,  \, \| \overline{p}^* \|_\infty \leq 1;  \, p_0^* =0 \bigg\}.
$$

Hence, we may rewrite the dual problem as
\begin{equation}\label{dual3N}
\sup_{p^* \in \mathcal{Z}} \bigg\{ - G^*(A^* p^*)\bigg\}.
\end{equation}
In light of the constraint $p_0^* = (\overline{p}^* \cdot \nu_\Omega)^-$, we may simplify the dual problem a bit. We set
$$
\mathcal{Z}' = \bigg\{ \overline{p}^* \in  \mathcal{D}_0^{\infty,2}(\Omega): \, \, \| \overline{p}^* \|_\infty \leq 1, \ (\overline{p}^* \cdot \nu_\Omega)^-  =0\bigg\},
$$
and then, using formula \eqref{eq:formulaforgstar}, we may rewrite the dual problem as
\begin{equation}\label{dual4N}
 \sup_{\overline{p}^* \in \mathcal{Z}'} \bigg\{ - \displaystyle\frac12 \int_\Omega ( -\mbox{div}_0(\overline{p}^*)  + g)^2 d\nu \bigg\}.
\end{equation}

Now, consider the energy functional $\mathcal{G_N} : L^2(\Omega, \nu) \rightarrow (-\infty, + \infty]$ defined by
\begin{equation}\label{11fuctmeN}
\mathcal{G_N}(v):= \left\{ \begin{array}{ll} \mathcal{TV_N}(v) + G(v),   \quad &\hbox{if} \ v \in BV(\Omega, d, \nu) \cap L^2(\Omega, \nu), \\ \\ + \infty \quad &\hbox{if} \ v \in  L^2(\Omega, \nu) \setminus BV(\Omega, d, \nu).\end{array}\right.
\end{equation}
 This functional is the extension of the functional $E + G$, which is well-defined for functions in $W^{1,1}(\Omega,d,\nu) \cap L^2(\Omega,\nu)$, to the space $BV(\Omega,d,\nu) \cap L^2(\Omega,\nu)$. Since $\mathcal{G_N}$ is coercive, convex and lower semi-continuous, the minimisation problem
$$\min_{v \in L^2(\Omega, \nu)} \mathcal{G_N}(v)$$ admits an optimal solution $u$ and we have
\begin{equation}\label{minimun1N}
\min_{v \in L^2(\Omega, \nu)} \mathcal{G_N}(v) = \inf_{v \in U} \bigg\{ E(Av) + G(v) \bigg\}.
\end{equation}
Let us take a sequence $u_n \in W^{1,1}(\Omega,d,\nu)$ which has the same trace as $u$ and converges strictly to $u$ and also $u_n \to u$ in $L^2(\Omega, \nu)$ (given by Lemma \ref{lem:goodapproximation}); then, it is a minimising sequence in \eqref{eq:primal}.  Since by \eqref{DFR1-TVflowN} and \eqref{DFR2-TVflowN} there is no duality gap and there exists a solution to the dual problem, we can now apply to this sequence the $\varepsilon$-subdifferentiability property given in \eqref{eq:epsilonsubdiff11N} and \eqref{eq:epsilonsubdiff21N}. By equation \eqref{eq:epsilonsubdiff21N}, for every $w \in L^2(\Omega,\nu)$ we have
$$G(w) - G(u_n) \geq \langle (w -u_n), A^* p^* \rangle  -\varepsilon_n.$$
Hence,
$$G(w) - G(u) \geq \langle (w -u), A^* p^* \rangle,$$
and consequently,
$$ A^* p^* \in \partial G(u) = \{ u - g \}. $$
and by \eqref{eq:neumanndiv0}, we get
\begin{equation}\label{div1N}
 -\mbox{div}_0(\overline{p}^*) = u -g.
\end{equation}
 Also, by the definition of $\mathcal{Z}$ and keeping in mind that $p_0^* = (\overline{p}^* \cdot \nu_\Omega)^-$, we get
\begin{equation}\label{front1N}(-\overline{p}^* \cdot \nu_\Omega)^- =0\quad | D_{\1_{\Omega}} |_\nu-a.e. \ \hbox{on} \ \partial \Omega.
\end{equation}
On the other hand, equation \eqref{eq:epsilonsubdiff11N} gives
\begin{equation}\label{eq:goodN}
0 \leq \int_{\partial\Omega}  p_0^* T_\Omega u_n \, d|D\1_{\Omega}|_\nu + \bigg( \| du_n \|_{ L^1(T^{*} \Omega)} + \int_\Omega du_n(\overline{p}^*) \, d\nu \bigg) \leq \varepsilon_n.
\end{equation}
 Because the trace of $u_n$ is fixed (and equal to the trace of $u$), the integral on $\partial\Omega$ does not change with $n$; hence, it has to equal zero. Since $\| du_n \|_{L^1(T^{*} \Omega)} = \int_{\Omega} |Du_n|_\nu$, in the integral on $\Omega$ we have
\begin{equation}\label{inerior1N}
0 \leq \int_\Omega \bigg( |Du_n|_\nu + du_n(\overline{p}^*) \bigg) d\nu \leq \varepsilon_n.
\end{equation}
Finally, keeping in mind that  $-\mbox{div}_0(\overline{p}^*) = u-g$ and again using the fact that the trace of $u_n$ is fixed and equal to the trace of $u$, by Green's formula we get
$$ \int_\Omega du_n(\overline{p}^*) \, d\nu = - \int_\Omega u_n \,  \mbox{div}_0(\overline{p}^*) \, d\nu  =\int_\Omega u_n \, (u -g) \, d\nu . $$
Then, applying again Green's formula, we have
$$\lim_{n \to \infty}  \int_\Omega du_n(\overline{p}^*) \, d\nu = \int_\Omega u \, (u-g) \, d\nu  = -  \int_\Omega u \,  \mbox{div}_0(\overline{p}^*) \, d\nu  = \int_\Omega (\overline{p}^*, Du). $$

Hence, since $u_n$ converges strictly to $u$ and having in mind \eqref{inerior1N}, we get
$$ \int_\Omega |Du|_\nu - \int_\Omega (-p^*, Du) = \lim_{n \rightarrow \infty} \bigg( \int_\Omega |Du_n|_\nu - \int_\Omega (-\overline{p}^*, Du) \bigg) = 0.$$
This together with Proposition \ref{prop:boundonAnzellottipairing} implies that
\begin{equation}\label{measure1N}
(-\overline{p}^*, Du) = |Du|_\nu \quad \mbox{as measures in } \Omega.
\end{equation}
Then, as consequence of \eqref{div1N}, \eqref{measure1N} and \eqref{front1N}, we have that the pair $(u, -\overline{p}^*)$ satisfies \eqref{1RCondN}, \eqref{2RCondN} and \eqref{3RCondN} respectively, and therefore \eqref{RCondN} holds.

Finally, by \cite[Proposition 2.11]{Brezis}, we have
$$ D(\partial \mathcal{TV_N}) \subset  D(\mathcal{TV_N}) =  BV(\Omega,d,\nu) \cap L^2(\Omega,\nu) \subset \overline{D(\mathcal{TV_N})}^{L^2(\Omega, \nu)} \subset \overline{D(\partial \mathcal{TV_N})}^{L^2(\Omega, \nu)},$$
from which follows the density of the domain.
\end{proof}

We can also prove the following characterisation of weak solutions in terms of variational inequalities. In the next subsection, we will use a similar reasoning to characterise entropy solutions for $L^1$ initial data; in fact, a variant of condition $(c)$ will be the way we introduce the definition of entropy solutions.

\begin{corollary}\label{cor:characterisationweaksolutions}
The following conditions are equivalent: \\
$(a)$ $(u,v) \in \partial\mathcal{TV}_{\mathcal{N}}$; \\
$(b)$ $(u,v) \in \mathcal{A}_{\mathcal{N}}$, i.e. $u, v \in L^2(\Omega, \nu)$, $u \in BV(\Omega, d, \nu)$ and there exists a vector field  $X \in \mathcal{D}_0^{\infty,2}(\Omega)$ with  $\| X \|_\infty \leq 1$ such that $-\mbox{div}_0(X) = v$ in $\Omega$ and
\begin{equation}
(X,Du) = |Du|_\nu  \quad \hbox{as measures in } \Omega;
\end{equation}
\begin{equation}
(X \cdot \nu_\Omega)^- = 0 \qquad |D\chi_\Omega|_\nu-\mbox{a.e. on } \Omega;
\end{equation}
$(c)$ $u, v \in L^2(\Omega, \nu)$, $u \in BV(\Omega, d, \nu)$ and there exists a vector field $X \in \mathcal{D}_0^{\infty,2}(\Omega)$ with  $\| X \|_\infty \leq 1$ such that $-\mbox{div}_0(X) = v$ in $\Omega$ and for every $w \in BV(\Omega,d,\nu) \cap L^2(\Omega,\nu)$
\begin{equation}\label{eq:variationalinequalitytvflow}
\int_{\Omega} v(w-u) \, d\nu \leq \int_{\Omega} (X,Dw) - \int_{\Omega} |Du|_\nu;
\end{equation}
$(d)$ $u, v \in L^2(\Omega, \nu)$, $u \in BV(\Omega, d, \nu)$ and there exists a vector field  $X \in \mathcal{D}_0^{\infty,2}(\Omega)$ with  $\| X \|_\infty \leq 1$ such that $-\mbox{div}_0(X) = v$ in $\Omega$ and for every $w \in BV(\Omega,d,\nu) \cap L^2(\Omega,\nu)$
\begin{equation}
\int_{\Omega} v(w-u) \, d\nu = \int_{\Omega} (X,Dw) - \int_{\Omega} |Du|_\nu.
\end{equation}
\end{corollary}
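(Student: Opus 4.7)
The equivalence (a)$\iff$(b) is exactly the content of Theorem \ref{thm:neumann}, so the plan is to close the ring via (b)$\Rightarrow$(d)$\Rightarrow$(c)$\Rightarrow$(b). The implication (d)$\Rightarrow$(c) is immediate because the asserted equality implies the asserted inequality.

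For (b)$\Rightarrow$(d), the idea is to take $w \in BV(\Omega,d,\nu) \cap L^2(\Omega,\nu)$ and apply the Gauss-Green formula (Theorem \ref{thm:generalgreensformula}) to the pair $(X, w-u)$: since $-\mbox{div}_0(X) = v$, we have
\begin{equation*}
\int_\Omega v(w-u)\,d\nu = \int_\Omega (X, D(w-u)) + \int_{\partial\Omega} T_\Omega(w-u)\,(X\cdot\nu_\Omega)^-\,d|D\chi_\Omega|_\nu.
\end{equation*}
The boundary term vanishes by the third condition of $\mathcal{A}_{\mathcal{N}}$, and by linearity of the Anzellotti pairing in the second argument combined with $(X,Du)=|Du|_\nu$, the right-hand side reduces to $\int_\Omega (X,Dw) - \int_\Omega |Du|_\nu$, giving (d).

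The main step is (c)$\Rightarrow$(b). Testing the inequality with $w = u$ gives
\begin{equation*}
0 \leq \int_\Omega (X,Du) - \int_\Omega |Du|_\nu,
\end{equation*}
while Proposition \ref{prop:boundonAnzellottipairing} together with $\|X\|_\infty \leq 1$ gives $\int_\Omega(X,Du) \leq \int_\Omega |(X,Du)| \leq \int_\Omega |Du|_\nu$. Hence $\int_\Omega(X,Du) = \int_\Omega |Du|_\nu$. Applying the same bound on every Borel subset and using absolute continuity $(X,Du) \ll |Du|_\nu$, we conclude that $(X,Du) = |Du|_\nu$ as measures on $\Omega$. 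This is the first nontrivial identity in (b).

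For the boundary condition, I substitute this identity back into the variational inequality and expand $\int_\Omega v(w-u)\,d\nu$ by the Gauss-Green formula as in the step (b)$\Rightarrow$(d) above, obtaining
\begin{equation*}
0 \leq \int_{\partial\Omega} T_\Omega(w-u)\,(X\cdot\nu_\Omega)^-\,d|D\chi_\Omega|_\nu \qquad \text{for every } w \in BV(\Omega,d,\nu)\cap L^2(\Omega,\nu).
\end{equation*}
By the surjectivity part of Theorem \ref{thm:traces}, together with truncations and the extension operator, the traces $T_\Omega(w-u)$ sweep out a dense subset of $L^1(\partial\Omega,|D\chi_\Omega|_\nu)$ (and contain $L^\infty$); replacing such a test function by its negative yields both signs, whence $(X\cdot\nu_\Omega)^- = 0$ $|D\chi_\Omega|_\nu$-a.e. on $\partial\Omega$. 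This gives (b) and closes the chain.

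The only delicate point is the measure identification $(X,Du) = |Du|_\nu$: knowing equality of the total masses together with the pointwise domination $|(X,Du)| \leq |Du|_\nu$ at the level of Radon–Nikodym derivatives (Proposition \ref{prop:boundonAnzellottipairing}) forces the Radon–Nikodym density $\theta(X,Du,\cdot)$ to equal $1$ $|Du|_\nu$-a.e., so the equality holds as measures, not merely in integral. Everything else reduces to a bookkeeping exercise with Theorem \ref{thm:generalgreensformula}.
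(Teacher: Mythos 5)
Your proof is correct and follows essentially the same route as the paper: (a)$\Leftrightarrow$(b) from Theorem \ref{thm:neumann}, (b)$\Rightarrow$(d) via the Gauss--Green formula with a vanishing boundary term, and (c)$\Rightarrow$(b) by testing with $w=u$, invoking Proposition \ref{prop:boundonAnzellottipairing} to upgrade equality of total masses to equality of measures, and then a trace-density argument for the normal trace. The only slip is a sign: substituting $(X,Du)=|Du|_\nu$ into the Gauss--Green expansion of \eqref{eq:variationalinequalitytvflow} yields $\int_{\partial\Omega} T_\Omega(w-u)\,(X\cdot\nu_\Omega)^-\,d|D\chi_\Omega|_\nu \le 0$ rather than $\ge 0$, but since you then test with both signs of the boundary datum this does not affect the conclusion.
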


\begin{proof}
The equivalence of $(a)$ and $(b)$ is given by Theorem \ref{thm:neumann}. To see that $(b)$ implies $(d)$, multiply the equation $v = -\mbox{div}_0(X)$ by $w - u$ and integrate over $\Omega$ with respect to $\nu$. Using the Gauss-Green formula, i.e. Theorem \ref{thm:generalgreensformula}, we get
\begin{equation*}
\int_{\Omega} v(w-u) \, d\nu = - \int_{\Omega} (w - u) \mbox{div}_0(X) \, d\nu = \int_{\Omega} (X,Dw) - \int_{\Omega} |Du|_\nu,
\end{equation*}
where the second equality follows from the fact that $(X \cdot \nu_\Omega) = 0$ $|D\chi_\Omega|_\nu$-a.e. on $\partial\Omega$, so the boundary term disappears.

Obviously, $(d)$ implies $(c)$. To finish the proof, let us see that $(c)$ implies $(b)$. If we take $w = u$ in \eqref{eq:variationalinequalitytvflow}, we get
\begin{equation*}
\int_{\Omega} |Du|_\nu \leq \int_{\Omega} (X,Du).
\end{equation*}
By Proposition \ref{prop:boundonAnzellottipairing}, this implies that $(X,Du) = |Du|_\nu$ as measures in $\Omega$. It remains to show that $(X \cdot \nu_\Omega)$ is the zero function. Notice that
\begin{equation}\label{eq:equalityofuvandDu}
\int_\Omega u v d\nu = \int_\Omega |Du|_\nu,
\end{equation}
where inequality in one direction is given by taking $w = 0$ in \eqref{eq:variationalinequalitytvflow} and in the other direction by taking $w = 2u$ in \eqref{eq:variationalinequalitytvflow}. But then, applying \eqref{eq:equalityofuvandDu} to condition \eqref{eq:variationalinequalitytvflow}, we get that for all $w \in BV(\Omega,d,\nu) \cap L^2(\Omega,\nu)$ we have
\begin{equation*}
\int_{\Omega} vw d\nu \leq \int_\Omega (X,Dw),
\end{equation*}
so actually we have an equality because the same inequality holds for $-w$. But then, using the Gauss-Green formula (Theorem \ref{thm:generalgreensformula}), for all $w \in BV(\Omega,d,\nu) \cap L^2(\Omega,\nu)$ we have
\begin{equation*}
\int_{\partial\Omega} T_\Omega w \, (X \cdot \nu_\Omega)^- \, d|D\chi_\Omega|_\nu = - \int_\Omega w \, \mbox{div}_0(X) \, d\nu - \int_\Omega (X,Dw) = \int_\Omega wv \, d\nu - \int_\Omega wv \, d\nu = 0.
\end{equation*}
Using a density argument as in the proof of equation \eqref{eq:pandpoagree}, we see that $(X \cdot \nu_\Omega)^- = 0$ $|D\chi_\Omega|_\nu$-a.e.
\end{proof}

\begin{definition}\label{def35N}
We define in $L^2(\Omega,\nu)$ the multivalued operator $\Delta^N_{1,\nu}$ by
\begin{center}$(u, v ) \in \Delta^N_{1,\nu}$ \ if, and only if, \ $-v \in \partial\mathcal{TV_N}(u)$.
\end{center}

\end{definition}

Our concept of solution of the Neumann problem \eqref{Neumann100} is the following:
\begin{definition}\label{def:neumann1p}
{\rm  Given $u_0 \in L^2(\Omega,\nu)$, we say that $u$ is a {\it weak solution} of the  Neumann problem \eqref{Neumann100} in $[0,T]$, if $u \in  W^{1,1}(0, T; L^2(\Omega,\nu))$,   $u(0, \cdot) =u_0$, and for almost all $t \in (0,T)$
\begin{equation}\label{def:neumannflow}
 u_t(t, \cdot) \in \Delta^N_{1,\nu}(t, \cdot).
\end{equation}
In other words, if $u(t) \in BV(\Omega, d, \nu)$ and there exist vector fields $X(t) \in   \mathcal{D}_0^{\infty,2}(\Omega)$ with $\| X(t) \|_\infty \leq 1$ such that, for almost all $t \in [0,T]$, the following conditions hold:
$$  \mbox{div}_0(X(t)) = u_t(t, \cdot) \quad \hbox{in} \ \Omega; $$
$$ (X(t), Du(t)) = |Du(t)|_\nu \quad \hbox{as measures};$$
$$ (X(t) \cdot \nu_\Omega)^- =0 \qquad  | D_{\1_{\Omega}} |_\nu-a.e. \ \hbox{on} \ \partial \Omega.$$
}
\end{definition}

Then, using the classical theory of maximal monotone operators (see for instance \cite{Brezis}), as a consequence of Theorem \ref{thm:neumann} we have the following existence and uniqueness theorem.

\begin{theorem}
For any $u_0 \in L^2(\Omega,\nu)$ and all $T > 0$, there exists a unique  weak solution of the Neumann problem \eqref{Neumann100} in $[0,T]$.
\end{theorem}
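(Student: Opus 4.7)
The plan is to reduce the claim to the abstract Cauchy problem \eqref{eq:abstractcauchyproblemneumann} and invoke the classical Brezis theory of gradient flows of proper, convex, lower semi-continuous functionals on a Hilbert space. Recall that Definition \ref{def:neumann1p} requires $u \in W^{1,1}(0,T;L^2(\Omega,\nu))$ with $u(0) = u_0$ and $u_t(t,\cdot) \in \Delta^N_{1,\nu}(u(t,\cdot))$ for a.e. $t \in (0,T)$; by Definition \ref{def35N}, the latter condition is exactly
\[
-u'(t) \in \partial \mathcal{TV}_{\mathcal{N}}(u(t)) \qquad \text{for a.e. } t \in (0,T),
\]
i.e. $u$ is a strong solution of \eqref{eq:abstractcauchyproblemneumann}.

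The functional $\mathcal{TV}_{\mathcal{N}}: L^2(\Omega,\nu) \to [0,+\infty]$ is proper (it vanishes on constants), convex, and lower semi-continuous with respect to $L^2(\Omega,\nu)$-convergence, as already observed after \eqref{NequmannF1} (the latter follows from the lower semi-continuity of $|D\cdot|_\nu$ with respect to $L^1$-convergence, combined with the fact that $L^2$-convergence on a set of finite measure implies $L^1$-convergence along a subsequence). Hence $\partial \mathcal{TV}_{\mathcal{N}}$ is a maximal monotone operator on $L^2(\Omega,\nu)$, and by Theorem \ref{thm:neumann} its domain $D(\partial \mathcal{TV}_{\mathcal{N}}) = D(\mathcal{A}_{\mathcal{N}})$ is dense in $L^2(\Omega,\nu)$.

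At this point I would apply Brezis' theorem for gradient flows of convex, lower semi-continuous functionals (see \cite{Brezis}): for every $u_0 \in \overline{D(\partial\mathcal{TV}_{\mathcal{N}})}^{L^2(\Omega,\nu)} = L^2(\Omega,\nu)$ and every $T>0$, there is a unique strong solution $u \in C([0,T];L^2(\Omega,\nu))$ of \eqref{eq:abstractcauchyproblemneumann} which additionally satisfies $u \in W^{1,\infty}_{\mathrm{loc}}((0,T];L^2(\Omega,\nu)) \hookrightarrow W^{1,1}(0,T;L^2(\Omega,\nu))$ (here one uses the regularising effect of the gradient flow of a convex lsc functional). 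Uniqueness is immediate from the monotonicity of $\partial \mathcal{TV}_{\mathcal{N}}$: if $u_1, u_2$ are two solutions with the same initial datum, then $\tfrac{1}{2}\tfrac{d}{dt}\|u_1(t)-u_2(t)\|_{L^2}^2 \le 0$, so $u_1 \equiv u_2$.

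Finally, to recover the pointwise conditions displayed in Definition \ref{def:neumann1p}, I would use the identification $\partial \mathcal{TV}_{\mathcal{N}} = \mathcal{A}_{\mathcal{N}}$ from Theorem \ref{thm:neumann}: for a.e. $t \in (0,T)$, the inclusion $-u'(t) \in \mathcal{A}_{\mathcal{N}}(u(t))$ provides a vector field $X(t) \in \mathcal{D}^{\infty,2}_0(\Omega)$ with $\|X(t)\|_\infty \le 1$ satisfying $\mathrm{div}_0(X(t)) = u_t(t,\cdot)$, $(X(t),Du(t)) = |Du(t)|_\nu$ as measures, and $(X(t)\cdot\nu_\Omega)^- = 0$ on $\partial\Omega$, which is precisely the required form of the weak solution. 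No step here is genuinely delicate once Theorem \ref{thm:neumann} is in hand; the only potential subtlety is to ensure the abstract existence result is invoked in a form that yields the $W^{1,1}$-in-time regularity for arbitrary $u_0 \in L^2(\Omega,\nu)$, which is guaranteed by the density of $D(\partial \mathcal{TV}_{\mathcal{N}})$ and the $1$-homogeneity (hence coercivity of sublevels in $L^2$) of $\mathcal{TV}_{\mathcal{N}}$.
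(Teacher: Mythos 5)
Your proposal is correct and follows exactly the route the paper takes: the paper gives no separate proof of this theorem, deriving it directly from Theorem \ref{thm:neumann} (the identification $\partial\mathcal{TV}_{\mathcal{N}}=\mathcal{A}_{\mathcal{N}}$ with dense domain) together with the Brezis theory of gradient flows of proper, convex, lower semi-continuous functionals, which is precisely what you do. Your final remark about the time-regularity near $t=0$ is the one point the paper also leaves implicit, and your translation of the abstract inclusion back into the pointwise conditions of Definition \ref{def:neumann1p} via Theorem \ref{thm:neumann} is exactly as intended.
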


Finally, let us comment on the asymptotic behaviour of weak solutions to the Neumann problem. We will rely on the results proved by Bungert and Burger in \cite{BuBu} for the gradient flow of a coercive (in the sense described below), $1$-homogeneous convex functional defined on a Hilbert space. These results are a recent generalisation of the classical results obtained in \cite{ACDM} (see also \cite{ACMBook}) for the total variation flow in Euclidean spaces.

Let us recall some notation used in \cite{BuBu}. Let $\mathcal{H}$ be a Hilbert space and $J: \mathcal{H} \rightarrow (-\infty, + \infty]$ a proper, convex, lower semi-continuous functional. Then, it is well known (see \cite{Brezis}) that the abstract Cauchy problem
\begin{equation}\label{ACP123}
\left\{ \begin{array}{ll} u'(t) + \partial J(u(t)) \ni 0, \quad t \in [0,T] \\[5pt] u(0) = u_0, \end{array}\right.
\end{equation}
has a unique strong solution $u(t)$ for any initial datum $u_0 \in \overline{D(J)}$.

We say that $J$ is {\it coercive}, if there exists a constant $C >0$ such that
\begin{equation}\label{coerc}
\Vert u \Vert \leq C J(u), \quad \forall \, u\in \mathcal{H}_0,
\end{equation}
where
$$\mathcal{H}_0:= \{ u \in \mathcal{H} \ : \ J(u)=0 \}^{\perp} \setminus \{ 0 \}.$$
Clearly, this inequality is equivalent to the following positive lower bound
$$\lambda_1(J):= \inf_{u \in \mathcal{H}_0} \frac{J(u)}{\Vert u \Vert} > 0.$$
The number $\lambda_1(J)$ defined above is called the {\it Rayleigh quotient} associated with $J$.

We will now comment on how to apply this framework to the total variation flow with Neumann boundary conditions. Clearly, the functional $\mathcal{TV}_{\mathcal{N}}$ is convex, $1$-homogenous and lower semi-continuous. Notice that
$$L^2(\Omega, \nu)_0:= \{ u \in L^2(\Omega, \nu): \ \mathcal{TV_N}(u)=0 \}^{\perp} \setminus \{ 0 \} = \left\{ u \in L^2(\Omega, \nu), \ u \not=0, \int_\Omega u d\nu =0\ \right\}.$$
Hence, $\mathcal{TV_N}$ is coercive if and only if if there exists a constant $C >0$ such that
\begin{equation}\label{coercN}
\Vert u \Vert_{L^2(\Omega, \nu)} \leq C \mathcal{TV_N}(u), \quad \forall \, u\in L^2(\Omega, \nu)_0,
\end{equation}
which is equivalent to the following {\it Poincar\'{e} inequality}
\begin{equation}\label{Poincare1}
\Vert u - \overline{u} \Vert_{L^2(\Omega, \nu)} \leq C \vert Du \vert_\nu (\Omega) , \quad \forall \, u\in BV(\Omega, d, \nu) \cap L^2(\Omega,\nu),
\end{equation}
where
$$\overline{u}:= \frac{1}{\nu(\Omega)} \int_\Omega u d\nu.$$
If the above Poincar\'e inequality holds, we can apply \cite[Theorem 2.2]{BuBu} and \cite[Theorem 2.3]{BuBu} to get the following result.

\begin{theorem}\label{mainResultNeu}
Assume that \eqref{Poincare1} holds. For  $u_0 \in L^2(\Omega,\nu)$, let  $u(t)$ be the weak  solution  the Neumann problem \eqref{Neumann100}. Then, we have
\begin{itemize}
\item[(i)] (Finite extinction time)
$$T_{\rm ex}(u_0) \leq \frac{\Vert u_0 \Vert_{L^2(\Omega, \nu)}}{\lambda_1(\mathcal{TV_N})},$$
where
$$T_{\rm ex}(u_0):= \inf \{ T >0 \ : \ u(t) = \overline{u_0},  \ \ \forall \, t \geq T \}.$$
\item[(ii)] (Asymptotic profiles for finite extinction) Let $$w(t):= \frac{u(t)}{\left(1 - \frac{1}{T_{\rm ex}(u_0)} t \right)},$$
and assume that $w(t)$ converges (possibly up to a subsequence) strongly to some $w_* \in L^2(\Omega, \nu)$ as $t \to T_{\rm ex}(u_0)$. Then it holds $$ \frac{1}{T_{\rm ex}(u_0)} \in \partial \mathcal{TV_N}(w_*),\quad   w_*  \not=0, \quad  \Vert w_* \Vert_{L^2(\Omega, \nu)} \leq \Vert u_0 \Vert_{L^2(\Omega, \nu)}.$$
\item[(iii)] (Ground state as asymptotic profile) An asymptotic profile $w_*$ is a ground state, i.e., $ w_* = {\rm argmin} \frac{\mathcal{TV_N}(w_*)}{\Vert w_* \Vert}$, if and only if $\lim_{t \nearrow T_{\rm ex}}(u_0) = \lambda_1(\mathcal{TV_N}).$
\end{itemize}

\end{theorem}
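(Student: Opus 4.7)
The plan is to verify that $\mathcal{TV}_{\mathcal{N}}$ fits exactly the framework of Bungert--Burger \cite{BuBu} and then apply their Theorems 2.2 and 2.3 directly; no further metric machinery is needed beyond what is already in hand. Concretely, four properties must be checked: (a) $\mathcal{TV}_{\mathcal{N}}$ is proper, convex, and lower semicontinuous on the Hilbert space $L^2(\Omega,\nu)$; (b) $\mathcal{TV}_{\mathcal{N}}$ is $1$-homogeneous; (c) the kernel $\{u\in L^2(\Omega,\nu): \mathcal{TV}_{\mathcal{N}}(u)=0\}$ coincides with the constants; (d) the coercivity inequality \eqref{coerc} on $L^2(\Omega,\nu)_0$ holds. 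Once these are in place, the remaining content of the theorem is a translation of the abstract statements of \cite[Thm.\ 2.2, Thm.\ 2.3]{BuBu} into the notation used here.

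First I would handle (a), (b), (c). Properness, convexity, and lower semicontinuity of $\mathcal{TV}_{\mathcal{N}}$ were already observed right after \eqref{NequmannF1}. The $1$-homogeneity is immediate from the relaxation formula \eqref{dfn:totalvariationonmetricspaces}, since $g_{\lambda u_n} = |\lambda|\, g_{u_n}$ for any upper gradient. For the kernel: clearly $\mathcal{TV}_{\mathcal{N}}(c) = 0$ for any constant $c$; conversely, if $|Du|_\nu(\Omega) = 0$, then the assumed Poincar\'e inequality \eqref{Poincare1} gives $\|u-\overline u\|_{L^2(\Omega,\nu)} \le C\,|Du|_\nu(\Omega)= 0$, hence $u=\overline{u}$ $\nu$-a.e. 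Therefore the orthogonal complement of $\ker \mathcal{TV}_{\mathcal{N}}$ in $L^2(\Omega,\nu)$ is exactly the mean-zero subspace, and (d) amounts to the very same Poincar\'e inequality \eqref{Poincare1} rewritten as \eqref{coercN}. In particular $\lambda_1(\mathcal{TV}_{\mathcal{N}})>0$.

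Next I would match the evolution problem with the abstract Cauchy problem in \cite{BuBu}. By Definition \ref{def:neumann1p} together with Theorem \ref{thm:neumann} (which identifies $\partial\mathcal{TV}_{\mathcal{N}}=\mathcal{A}_{\mathcal{N}}$), the weak solution $u(t)$ of \eqref{Neumann100} is precisely the unique strong solution of the abstract Cauchy problem \eqref{eq:abstractcauchyproblemneumann} driven by $\partial\mathcal{TV}_{\mathcal{N}}$. Since the flow preserves the mean (because $v\in\partial\mathcal{TV}_{\mathcal{N}}(u)$ admits the representation $v=-\mathrm{div}_0(X)$ with $(X\cdot\nu_\Omega)^-=0$ on $\partial\Omega$, and Theorem \ref{thm:generalgreensformula} applied with test function $\mathbf{1}$ gives $\int_\Omega v\,d\nu=0$), the steady state reached in finite time is $\overline{u_0}$. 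Invoking \cite[Thm.\ 2.2]{BuBu} yields part (i) with the explicit bound $T_{\mathrm{ex}}(u_0)\le \|u_0\|_{L^2(\Omega,\nu)}/\lambda_1(\mathcal{TV}_{\mathcal{N}})$, and \cite[Thm.\ 2.3]{BuBu} yields parts (ii) and (iii) verbatim.

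The only step I expect to require slight care is the mean-preservation, because it ensures the flow is actually the one studied in \cite{BuBu} (whose abstract setting is $J$ on a Hilbert space with convergence to the projection onto $\ker J$). This is not a deep point, but it is the place where the specific structure of the Neumann operator $\mathcal{A}_{\mathcal{N}}$, namely $(X\cdot\nu_\Omega)^-=0$, must be used; otherwise the extinction would be to some other point of $\ker\mathcal{TV}_{\mathcal{N}}$ and the quantitative bound in (i) would not have the stated form. With this observation in place, the three conclusions are simply the translation of the Bungert--Burger theorems, and no additional argument is required.
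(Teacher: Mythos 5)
Your proposal is correct and takes essentially the same route as the paper: the paper also proves this theorem by verifying that $\mathcal{TV}_{\mathcal{N}}$ is convex, $1$-homogeneous and lower semicontinuous, identifying $L^2(\Omega,\nu)_0$ with the nonzero mean-zero functions, observing that coercivity is precisely the Poincar\'e inequality \eqref{Poincare1}, and then invoking \cite[Theorems 2.2 and 2.3]{BuBu}. Your additional check that the flow preserves the mean (via the Gauss--Green formula with test function $1$ and the Neumann condition $(X\cdot\nu_\Omega)^-=0$) is a correct detail that the paper leaves implicit.
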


\subsection{Entropy solutions}

Now we are going to study the Neumann problem for initial data in $L^1(\Omega,\nu)$. For that, we will extensively use the following truncation function $T_k: \R \rightarrow \R$, defined for all $k > 0$ as
$$T_k(r):= \left\{ \begin{array}{lll} k, \quad &\hbox{if} \ r> k, \\[10pt] r, \quad &\hbox{if} \ \vert r \vert \leq k, \\[10pt] - k, \quad &\hbox{if} \ r < -k. \end{array} \right.$$
\begin{definition}
{\rm $(u,v) \in \mathcal{B_N}$ if and only if $u, v \in L^1(\Omega, \nu)$, $T_k(u) \in BV(\Omega, d, \nu)$ for all $k>0$,  and  there exists a vector field $X \in \mathcal{D}_0^{\infty,1}(\Omega)$ with $\| X \|_\infty \leq 1$ and $-\mbox{div}_0(X) = v$ in $\Omega$ such that
\begin{equation}\label{e1def}
\int_\Omega (w - T_k(u)) \,v \, d \nu \leq \int_\Omega (X, Dw) - \int_\Omega \vert DT_k(u) \vert_\nu
\end{equation}
for all $w \in  BV(\Omega, d, \nu) \cap L^\infty(\Omega, \nu)$ and $k > 0$.
}
\end{definition}

We have the following characterisation of the operator $\mathcal{B_N}$,  analogous to the characterisation of $\mathcal{A_N}$ given in Corollary \ref{cor:characterisationweaksolutions}. In fact, we will later see that $\mathcal{B_N}$ coincides with $\mathcal{A_N}$ after a restriction to its domain.

\begin{proposition}\label{charact1} The following conditions are equivalent: \\
$(a)$ $(u,v) \in \mathcal{B_N}$; \\
$(b)$ $u, v \in L^1(\Omega, \nu)$, $T_k(u) \in BV(\Omega, d, \nu)$ for all $k>0$,  and  there exists a vector field $X \in \mathcal{D}_0^{\infty,1}(\Omega)$ with $\| X \|_\infty \leq 1$ and  $ -\mbox{div}_0(X) = v$ in $\Omega$ such that
\begin{equation}\label{e2def}
\int_\Omega (w - T_k(u)) \, v \, d \nu = \int_\Omega (X, Dw) - \int_\Omega \vert DT_k(u) \vert_\nu
\end{equation}
for all $w \in  BV(\Omega, d, \nu) \cap L^\infty(\Omega, \nu)$ and $k >0$;\\
$(c)$  $u, v \in L^1(\Omega, \nu)$, $T_k(u) \in BV(\Omega, d, \nu)$ for all $k>0$,  and  there exists a vector field $X \in \mathcal{D}_0^{\infty,1}(\Omega)$ with $\| X \|_\infty \leq 1$ such that
\begin{equation}-\mbox{div}_0(X) = v \mbox{ in } \Omega;
\end{equation}
\begin{equation}\label{e3def}
\int_\Omega (X, DT_k(u)) = \int_\Omega \vert DT_k(u) \vert_\nu, \quad \forall \, k>0,
\end{equation}
and
\begin{equation}\label{e4def}
(X \cdot \nu_\Omega)^- =0 \qquad  | D_{\1_{\Omega}} |_\nu-a.e. \ \hbox{on} \ \partial \Omega.
\end{equation}
\end{proposition}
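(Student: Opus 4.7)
The plan is to establish the equivalence via the cycle (b) $\Rightarrow$ (a) $\Rightarrow$ (c) $\Rightarrow$ (b), closely mirroring the strategy used in the proof of Corollary \ref{cor:characterisationweaksolutions}. The implication (b) $\Rightarrow$ (a) is immediate since equality implies inequality.

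For (a) $\Rightarrow$ (c), I will first specialise \eqref{e1def} to the admissible test function $w = T_k(u) \in BV(\Omega,d,\nu) \cap L^\infty(\Omega,\nu)$, which gives
$$ 0 \leq \int_\Omega (X, DT_k(u)) - \int_\Omega |DT_k(u)|_\nu. $$
Combined with the opposite inequality supplied by Proposition \ref{prop:boundonAnzellottipairing} (applied with $\|X\|_\infty \leq 1$), this yields the identity \eqref{e3def}. Next, for arbitrary $\phi \in BV(\Omega,d,\nu) \cap L^\infty(\Omega,\nu)$, I will test \eqref{e1def} with $w = T_k(u) + \phi$ and then with $w = T_k(u) - \phi$. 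Because the Anzellotti pairing is linear in its BV argument (visible from the defining formula in Definition \ref{dfn:Anzellotti}), we have $(X, Dw) = (X, DT_k(u)) \pm (X, D\phi)$, and invoking \eqref{e3def} the two opposite inequalities collapse to
$$ \int_\Omega \phi \, v \, d\nu = \int_\Omega (X, D\phi). $$
Applying the Gauss-Green formula (Theorem \ref{thm:generalgreensformula}) to the right-hand side and using $-\mbox{div}_0(X) = v$, the bulk integrals cancel and there remains
$$ \int_{\partial\Omega} T_\Omega \phi \, (X \cdot \nu_\Omega)^- \, d|D\chi_\Omega|_\nu = 0 $$
for every $\phi \in BV(\Omega,d,\nu) \cap L^\infty(\Omega,\nu)$. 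A density argument for the trace operator, identical to the one used at the end of the proof of Corollary \ref{cor:characterisationweaksolutions}, then yields \eqref{e4def}.

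For (c) $\Rightarrow$ (b), the plan is to start from $v = -\mbox{div}_0(X)$, multiply by $w - T_k(u) \in BV(\Omega,d,\nu) \cap L^\infty(\Omega,\nu)$, integrate over $\Omega$, and apply the Gauss-Green formula to obtain
$$ \int_\Omega (w - T_k(u)) v \, d\nu = \int_\Omega (X, D(w - T_k(u))) - \int_{\partial\Omega} T_\Omega(w - T_k(u)) \, (X \cdot \nu_\Omega)^- \, d|D\chi_\Omega|_\nu. $$
The boundary term vanishes thanks to \eqref{e4def}, while linearity of the Anzellotti pairing together with \eqref{e3def} replaces $\int_\Omega (X, DT_k(u))$ by $\int_\Omega |DT_k(u)|_\nu$, producing exactly \eqref{e2def}.

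The technical core on which the argument rests is the linearity of the Anzellotti pairing in its BV variable, which is transparent from the defining formula $\langle (X, Du), f \rangle = -\int_\Omega u \, \mbox{div}_0(fX) \, d\nu$; once this is available, the rest is the truncation analogue of the computation in Corollary \ref{cor:characterisationweaksolutions}. The only other delicate point, namely passing from the vanishing of the boundary integral against the traces of arbitrary functions in $BV(\Omega,d,\nu) \cap L^\infty(\Omega,\nu)$ to the $|D\chi_\Omega|_\nu$-a.e.\ vanishing of $(X \cdot \nu_\Omega)^-$, is handled by precisely the truncation and surjectivity argument already used there.
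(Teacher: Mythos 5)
Your proposal is correct and follows essentially the same route as the paper's proof: the same cycle of implications, the same use of Proposition \ref{prop:boundonAnzellottipairing} to upgrade the inequality at $w = T_k(u)$ to \eqref{e3def}, and the same Gauss-Green plus trace-surjectivity/density argument for \eqref{e4def}. The only immaterial difference is that in (a) $\Rightarrow$ (c) you test with $w = T_k(u) \pm \phi$ and invoke additivity of the pairing in its BV argument, whereas the paper tests with $w = 0$ and $w = 2T_k(u)$ to first establish $\int_\Omega T_k(u)\, v \, d\nu = \int_\Omega |DT_k(u)|_\nu$ and then reads off the same identity $\int_\Omega \phi\, v\, d\nu = \int_\Omega (X,D\phi)$ from \eqref{e1def}; both variants rest on the (bi)linearity of the Anzellotti pairing, which is immediate from Definition \ref{dfn:Anzellotti} and is used freely elsewhere in the paper.
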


\begin{proof}
Obviously (b) implies (a). Let us see that (c) implies (b): multiply the equation $v = -\mbox{div}_0(X)$ by $w - T_k(u)$ and integrate over $\Omega$ with respect to $\nu$. Using the Gauss-Green formula, i.e. Theorem \ref{thm:generalgreensformula}, we get
\begin{equation*}
\int_{\Omega} v \, (w-T_k(u)) \, d\nu = - \int_{\Omega} (w - T_k(u)) \, \mbox{div}_0(X) \, d\nu = \int_{\Omega} (X,Dw) - \int_{\Omega} |DT_k(u)|_\nu,
\end{equation*}
where the second equality follows from the fact that $|D\chi_\Omega|_\nu$-a.e. on $\partial\Omega$ we have $(X \cdot \nu_\Omega)^- = 0$, so the boundary term disappears.

Let us see that (a) implies (c). Taking $w = T_k(u)$ in \eqref{e1def}, we get
$$0 \leq \int_\Omega (X, DT_k(u)) - \int_\Omega \vert DT_k(u) \vert_\nu.$$
Thus,
$$\int_\Omega (X, DT_k(u)) \leq  \Vert X \Vert_\infty \int_\Omega \vert DT_k(u) \vert_\nu \leq  \int_\Omega \vert DT_k(u) \vert_\nu \leq \int_\Omega (X, DT_k(u)),$$
and \eqref{e3def} follows.

On the other hand, by taking $w = 0$ in \eqref{e1def} we get
$$ \int_\Omega \vert DT_k(u) \vert_\nu \leq \int_\Omega T_k(u) v d \nu,$$
and by taking $w = 2T_k(u)$ in \eqref{e1def} and using \eqref{e3def}, we obtain
$$\int_\Omega T_k(u) v d \nu \leq 2\int_\Omega (X, DT_k(u)) - \int_\Omega\vert DT_k(u) \vert_\nu = \int_\Omega\vert DT_k(u) \vert_\nu.$$
Consequently,
\begin{equation}\label{e5def}
\int_\Omega T_k(u) v d \nu = \int_\Omega\vert DT_k(u) \vert_\nu \quad \hbox{for all} \ k>0.
\end{equation}
Now, using \eqref{e5def} in \eqref{e1def}, we have
$$\int_\Omega w v d \nu \leq  \int_\Omega (X, Dw), \quad \forall \, w \in  BV(\Omega, d, \nu) \cap L^\infty(\Omega, \nu).$$
Since the same inequality holds for $-  w \in  BV(\Omega, d, \nu) \cap L^\infty(\Omega, \nu)$, we obtain that
\begin{equation}\label{e6def}
\int_\Omega w v d \nu  = \int_\Omega (X, Dw), \quad \forall \, w \in  BV(\Omega, d, \nu) \cap L^\infty(\Omega, \nu).
\end{equation}
Finally, since $ -\mbox{div}_0(X) = v$ in $\Omega$, by Gauss-Green formula (Theorem \ref{thm:generalgreensformula}) and \eqref{e6def} we have
$$ \int_{\partial\Omega}  T_\Omega w \, (X \cdot \nu_\Omega)^- \, d| D_{\1_{\Omega}} |_\nu = - \int_\Omega w \,   \mbox{div}_0(X) \, d\nu  - \int_\Omega (X,Dw) = 0,$$
for all  $w \in  BV(\Omega, d, \nu) \cap L^\infty(\Omega, \nu)$.  Using a density argument as in the proof of equation \eqref{eq:pandpoagree}, we see that \eqref{e4def} holds.
\end{proof}

\begin{corollary}\label{inclusion1} We have
$$\mathcal{B_N} \cap (L^2(\Omega,\nu) \times L^2(\Omega,\nu)) = \mathcal{A_N}.$$
\end{corollary}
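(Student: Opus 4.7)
The plan is to prove both inclusions separately, relying heavily on the characterisation of $\mathcal{B_N}$ given in Proposition \ref{charact1} and the characterisation of $\mathcal{A_N}$ in terms of the Gauss--Green formula.

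For the inclusion $\mathcal{A_N}\subset \mathcal{B_N}\cap (L^2\times L^2)$, I would take $(u,v)\in\mathcal{A_N}$ with the vector field $X\in \mathcal{D}_0^{\infty,2}(\Omega)$ provided by the definition. Since $\Omega$ is bounded, $u,v\in L^2\subset L^1$, so $X\in \mathcal{D}_0^{\infty,1}(\Omega)$ as well, and truncations of a $BV$ function are again $BV$. The only nontrivial point is verifying $(X,DT_k(u))=|DT_k(u)|_\nu$. For this I would invoke Proposition \ref{Lipcomp}: since $T_k$ is Lipschitz and nondecreasing,
\begin{equation*}
\theta(X,DT_k(u),x)=\theta(X,Du,x)\quad |Du|_\nu\text{-a.e.},
\end{equation*}
and the right-hand side equals $1$ because $(X,Du)=|Du|_\nu$; since $|DT_k(u)|_\nu\ll |Du|_\nu$ (the Lipschitz constant of $T_k$ is $1$), this identity also holds $|DT_k(u)|_\nu$-a.e., which is exactly what is needed to apply condition (c) of Proposition \ref{charact1}. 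The boundary condition on $X$ is already there, so $(u,v)\in\mathcal{B_N}$.

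For the reverse inclusion, take $(u,v)\in\mathcal{B_N}$ with $u,v\in L^2$. The first step is to show that $u\in BV(\Omega,d,\nu)$. The key input here is identity \eqref{e5def} from the proof of Proposition \ref{charact1}, namely $\int_\Omega |DT_k(u)|_\nu=\int_\Omega T_k(u)v\,d\nu$. By dominated convergence (using $|T_k(u)|\le |u|$ and $uv\in L^1$), the right-hand side converges to $\int_\Omega uv\,d\nu$ as $k\to\infty$. Since $T_k(u)\to u$ in $L^1(\Omega,\nu)$, lower semicontinuity of the total variation yields $|Du|_\nu(\Omega)\le \int_\Omega uv\,d\nu<\infty$, so $u\in BV(\Omega,d,\nu)\cap L^2(\Omega,\nu)$. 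Moreover, because $-\mathrm{div}_0(X)=v\in L^2$, the vector field $X$ now lies in $\mathcal{D}_0^{\infty,2}(\Omega)$, so the Anzellotti pair $(X,u)$ satisfies the joint integrability assumption \eqref{Anzellotti:assumption} with $p=2$.

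The remaining step, which I expect to be the main point of the argument, is to upgrade $(X,DT_k(u))=|DT_k(u)|_\nu$ to $(X,Du)=|Du|_\nu$. For this I would apply the Gauss--Green formula (Theorem \ref{thm:generalgreensformula}) to the pair $(X,u)$, using the boundary condition $(X\cdot\nu_\Omega)^-=0$ from Proposition \ref{charact1}(c), to obtain
\begin{equation*}
\int_\Omega (X,Du)=-\int_\Omega u\,\mathrm{div}_0(X)\,d\nu=\int_\Omega uv\,d\nu.
\end{equation*}
Combined with the previous step, this gives $\int_\Omega (X,Du)=\int_\Omega uv\,d\nu\ge |Du|_\nu(\Omega)$, while Proposition \ref{prop:boundonAnzellottipairing} yields the opposite pointwise inequality $(X,Du)\le |Du|_\nu$ as measures. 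Consequently $(X,Du)=|Du|_\nu$ as measures, and all three defining properties of $\mathcal{A_N}$ are verified, completing the proof.
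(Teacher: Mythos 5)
Your proof is correct. The inclusion $\mathcal{A_N}\subset\mathcal{B_N}$ is argued exactly as in the paper: both reduce the matter, via Proposition \ref{charact1}, to the identity $(X,DT_k(u))=|DT_k(u)|_\nu$ and obtain it from Proposition \ref{Lipcomp} together with $\theta(X,Du,\cdot)=1$ $|Du|_\nu$-a.e. (your explicit remark that $|DT_k(u)|_\nu\ll|Du|_\nu$ transfers the identity to the relevant null sets is a welcome precision that the paper leaves implicit). For the converse inclusion you take a genuinely different, more hands-on route. The paper fixes an arbitrary $w\in BV(\Omega,d,\nu)\cap L^\infty(\Omega,\nu)$, passes to the limit $k\to\infty$ in the defining inequality \eqref{e1def} using lower semicontinuity of the total variation, and then invokes the packaged equivalence $(c)\Rightarrow(b)$ of Corollary \ref{cor:characterisationweaksolutions}, which requires an extra density/truncation argument to pass from test functions in $BV\cap L^\infty$ to $BV\cap L^2$. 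You instead extract only the single identity \eqref{e5def}, $\int_\Omega|DT_k(u)|_\nu=\int_\Omega T_k(u)v\,d\nu$, let $k\to\infty$ to get $u\in BV(\Omega,d,\nu)$ with $|Du|_\nu(\Omega)\le\int_\Omega uv\,d\nu$, and then verify the measure condition $(X,Du)=|Du|_\nu$ directly by applying the Gauss--Green formula (Theorem \ref{thm:generalgreensformula}) to the pair $(X,u)$ and squeezing against the bound of Proposition \ref{prop:boundonAnzellottipairing}. This essentially inlines the content of the proof of Corollary \ref{cor:characterisationweaksolutions}$(c)\Rightarrow(b)$, but it avoids the density step and uses less of the variational inequality; the paper's version is shorter on the page because it can cite the corollary wholesale. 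Both arguments share the same mild technical gap, namely the silent upgrade of $X$ from $\mathcal{D}_0^{\infty,1}(\Omega)$ to $\mathcal{D}_0^{\infty,2}(\Omega)$ once one knows $\mathrm{div}_0(X)=-v\in L^2(\Omega,\nu)$, which strictly speaking requires extending the defining integration-by-parts identity from bounded to $L^2$ test functions by truncation; this is routine and does not affect correctness.
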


\begin{proof}
Given $(u,v) \in \mathcal{B_N} \cap (L^2(\Omega,\nu) \times L^2(\Omega,\nu))$, we have $u, v \in L^2(\Omega,\nu)$, $T_k(u) \in BV(\Omega, d, \nu)$ for all $k>0$,  and  there exists a vector field $X \in \mathcal{D}_0^{\infty,1}(\Omega)$ with $\| X \|_\infty \leq 1$ and  $ -\mbox{div}_0(X) = v$ in $\Omega$ such that
\begin{equation}\label{e1defN}
\int_\Omega (w - T_k(u)) \, v \, d \nu \leq \int_\Omega (X, Dw) - \int_\Omega \vert DT_k(u) \vert_\nu
\end{equation}
for all $w \in  BV(\Omega, d, \nu) \cap L^\infty(\Omega, \nu)$ and $k >0$. Then, since  $T_k(u) \to u$ in $L^2(\Omega,\nu)$, by the lower semi-continuity of the total variation we have $u \in BV(\Omega, d, \nu)$, and letting $k \to \infty$ in \eqref{e1defN} we get
$$\int_\Omega (w -u)v d \nu \leq \int_\Omega (X, Dw) - \int_\Omega \vert Du \vert_\nu$$
for all $w \in  BV(\Omega, d, \nu) \cap L^\infty(\Omega, \nu)$ and $k >0$.  Then, by implication $(c) \Rightarrow (b)$ of Corollary \ref{cor:characterisationweaksolutions}, we obtain that $(u,v) \in  \mathcal{A_N}$: by considering truncations we may only test inequality \eqref{eq:variationalinequalitytvflow} with functions from $BV(\Omega,d,\nu) \cap L^\infty(\Omega,\nu)$, which is dense in $BV(\Omega,d,\nu) \cap L^2(\Omega,\nu)$.

Suppose now that $(u, v) \in \mathcal{A_N}$. Then, $u, v \in L^2(\Omega, \nu)$, $u \in BV(\Omega, d, \nu)$ and  there exists a vector field $X \in  \mathcal{D}_0^{\infty,2}(\Omega)$ with $\| X \|_\infty \leq 1$ such that the following conditions hold:
$$  -\mbox{div}_0(X) = v \quad \hbox{in} \ \Omega; $$
$$ (X, Du) = |Du|_\nu \quad \hbox{as measures};$$
$$ (X \cdot \nu_\Omega)^- =0 \qquad  | D_{\1_{\Omega}} |_\nu-a.e. \ \hbox{on} \ \partial \Omega.$$
Hence, by Proposition \ref{charact1}, to see that $(u,v) \in \mathcal{B_N}$, we only need to show that
\begin{equation}\label{e3defN}
\int_\Omega (X, DT_k(u)) = \int_\Omega \vert DT_k(u) \vert_\nu \quad \forall \, k>0,
\end{equation}
By Proposition \ref{Lipcomp}, we have
$$\theta(X, DT_k(u), x) = \theta(X, Du, x), \quad a.e. \ \hbox{with respect to} \ \vert DT_k(u) \vert_\nu \ \hbox{and} \ \vert Du \vert_\nu.$$
Then, since $(X, Du) = |Du|_\nu \quad \hbox{as measures}$, we have $\theta(X, Du, x) = 1$ a.e. with respect to the measure $\vert Du \vert_\nu$. Hence
$$\int_\Omega (X, DT_k(u)) = \int_\Omega  \theta(X, DT_k(u), x) \vert DT_k(u) \vert_\nu = \int_\Omega   \vert DT_k(u) \vert_\nu.$$
\end{proof}

\begin{theorem}\label{L1Neumann}
The operator $\mathcal{B_N}$ is $m$-completely accretive in $L^1(\Omega, \nu)$ and homogeneous of degree zero. Moreover, $D(\mathcal{B_N})$ is dense in $L^1(\Omega, \nu)$.
\end{theorem}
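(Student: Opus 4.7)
The proof verifies four claims: homogeneity of degree zero, complete accretivity, the range condition, and density of the domain. Homogeneity is immediate: if $(u,v) \in \mathcal{B_N}$ is witnessed by the vector field $X$, then for $\lambda > 0$ the scaling identity $T_k(\lambda u) = \lambda T_{k/\lambda}(u)$ shows that the same $X$ witnesses $(\lambda u, v)$, since the divergence and boundary conditions depend only on $X$ and the pairing condition transfers by linearity of the differential. Density of $D(\mathcal{B_N})$ in $L^1(\Omega,\nu)$ is also quick: Corollary \ref{inclusion1} gives $D(\mathcal{A_N}) \subset D(\mathcal{B_N})$, and by Theorem \ref{thm:neumann} $D(\mathcal{A_N})$ is dense in $L^2(\Omega,\nu)$, which is in turn dense in $L^1(\Omega,\nu)$ because $\nu(\Omega) < \infty$ (by doubling and boundedness of $\Omega$).

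For complete accretivity, I start from Proposition \ref{charact1}(b). Choosing $w = T_k(u_i) + \phi$ for $\phi \in BV(\Omega,d,\nu) \cap L^\infty(\Omega,\nu)$ and using $(X_i, DT_k(u_i)) = |DT_k(u_i)|_\nu$ yields the identity $\int_\Omega \phi\, v_i\, d\nu = \int_\Omega (X_i, D\phi)$, and subtracting for $i = 1, 2$ gives
\begin{equation*}
\int_\Omega \phi (v_1 - v_2)\, d\nu = \int_\Omega (X_1 - X_2, D\phi).
\end{equation*}
For $q \in P_0$ take $\phi = q(T_k(u_1) - T_k(u_2)) \in BV(\Omega,d,\nu) \cap L^\infty(\Omega,\nu)$. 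The key observation is that the measure $(X_1 - X_2, D(T_k(u_1) - T_k(u_2)))$ is pointwise nonnegative: writing Radon-Nikodym densities with respect to $\mu := |DT_k(u_1)|_\nu + |DT_k(u_2)|_\nu$ as $|DT_k(u_i)|_\nu = f_i \mu$ with $f_1 + f_2 = 1$ and $(X_i, DT_k(u_j)) = g_{ij}\mu$, the saturation condition $(X_i, DT_k(u_i)) = |DT_k(u_i)|_\nu$ gives $g_{ii} = f_i$, while Proposition \ref{prop:boundonAnzellottipairing} gives $|g_{ij}| \leq f_j$, so $(X_1 - X_2, D(T_k(u_1) - T_k(u_2))) = (f_1 + f_2 - g_{12} - g_{21})\mu \geq 0$. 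Proposition \ref{Lipcomp} then ensures that the nonnegative density $\theta(X_1 - X_2, D(T_k(u_1) - T_k(u_2)), \cdot)$ is also the density of $(X_1 - X_2, Dq(T_k(u_1) - T_k(u_2)))$ with respect to $|Dq(T_k(u_1) - T_k(u_2))|_\nu$, so the latter measure is nonnegative. Integrating and passing $k \to \infty$ by dominated convergence (using that $q$ is bounded Lipschitz and $T_k(u_i) \to u_i$ in $L^1$) yields $\int_\Omega q(u_1 - u_2)(v_1 - v_2)\, d\nu \geq 0$ for every $q \in P_0$, which is complete accretivity by the B\'enilan-Crandall criterion from \cite{BCr2}.

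The range condition $R(I + \mathcal{B_N}) = L^1(\Omega,\nu)$ follows by approximation. For $g \in L^1(\Omega,\nu)$, pick $g_n \in L^2(\Omega,\nu)$ with $g_n \to g$ in $L^1$; by Theorem \ref{thm:neumann} there exist $u_n$ with $(u_n, g_n - u_n) \in \mathcal{A_N} \subset \mathcal{B_N}$ (Corollary \ref{inclusion1}), and the $L^1$-contraction inherited from complete accretivity gives $\|u_n - u_m\|_{L^1} \leq \|g_n - g_m\|_{L^1}$, so $u_n \to u$ and $v_n := g_n - u_n \to g - u =: v$ in $L^1$. A crucial auxiliary bound is $\int_\Omega |DT_k(u_n)|_\nu \leq k\|v_n\|_{L^1}$, obtained by applying Theorem \ref{thm:generalgreensformula} to $(X_n, DT_k(u_n)) = |DT_k(u_n)|_\nu$ with vanishing boundary term; combined with weak$^*$ compactness of $\{X_n\}$ in $L^\infty(T\Omega)$ (since $\|X_n\|_\infty \leq 1$), it allows one to extract a strictly convergent subsequence for each $T_k(u_n)$ in $BV(\Omega,d,\nu)$ and pass to the limit in every defining condition of $\mathcal{B_N}$, yielding $(u,v) \in \mathcal{B_N}$ and hence $g \in R(I + \mathcal{B_N})$.

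The hardest part is the pointwise nonnegativity argument for $(X_1 - X_2, D(T_k(u_1) - T_k(u_2)))$ together with the transfer via Proposition \ref{Lipcomp} to $(X_1 - X_2, Dq(T_k(u_1) - T_k(u_2)))$. A naive coarea-based approach would lose the sign information at the level of individual level sets, so both the Radon-Nikodym decomposition with respect to $\mu$ and the invariance of $\theta$ under Lipschitz increasing compositions are essential. A secondary technical point is the passage to the limit in the range condition step, where strict convergence of $T_k(u_n)$ in $BV(\Omega,d,\nu)$ must be extracted from the auxiliary estimate and combined with the weak$^*$ convergence of the vector fields in order to preserve the pairing identity in the limit.
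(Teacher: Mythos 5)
Your overall strategy coincides with the paper's: homogeneity via $T_k(\lambda r)=\lambda T_{k/\lambda}(r)$, complete accretivity via the nonnegativity of the measure $(X_1-X_2,D(T_k(u_1)-T_k(u_2)))$ transferred to $(X_1-X_2,Dq(T_k(u_1)-T_k(u_2)))$ through Proposition \ref{Lipcomp}, the range condition via the resolvents of $\mathcal{A_N}$ and the $L^1$-contraction, and density via Corollary \ref{inclusion1}. Your Radon--Nikodym bookkeeping with respect to $\mu=|DT_k(u_1)|_\nu+|DT_k(u_2)|_\nu$ is just a pointwise restatement of the paper's Borel-set computation, and your derivation of $\int_\Omega \phi\,v_i\,d\nu=\int_\Omega(X_i,D\phi)$ reproduces the identity the paper obtains inside the proof of Proposition \ref{charact1}.

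There is, however, one step that does not work as written: in the range-condition argument you claim that the bound $\int_\Omega|DT_k(u_n)|_\nu\le k\|v_n\|_{L^1}$ together with weak$^*$ compactness of $\{X_n\}$ lets you ``extract a strictly convergent subsequence for each $T_k(u_n)$'' and then ``pass to the limit in every defining condition of $\mathcal{B_N}$.'' A uniform total-variation bound plus $L^1$-convergence gives only lower semicontinuity, $\int_\Omega|DT_k(u)|_\nu\le\liminf_n\int_\Omega|DT_k(u_n)|_\nu$; it does not yield strict convergence, and without strict convergence you cannot preserve the measure identity $(X,DT_k(u))=|DT_k(u)|_\nu$ in the limit merely from $X_n\rightharpoonup X$ weakly$^*$. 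The repair is to pass to the limit in the \emph{inequality} formulation \eqref{e1def} instead: lower semicontinuity handles the term $-\int_\Omega|DT_k(u_n)|_\nu$ (which appears with the favourable sign), while for fixed $w\in BV(\Omega,d,\nu)\cap L^\infty(\Omega,\nu)$ the term $\int_\Omega(X_n,Dw)$ converges because, by Theorem \ref{thm:generalgreensformula} and the vanishing normal traces, it equals $\int_\Omega w\,v_n\,d\nu\to\int_\Omega w\,v\,d\nu$. This is exactly how the paper proceeds: it first proves that $\mathcal{B_N}$ is a closed operator by this limit passage in \eqref{e1def}, and only then deduces $R(I+\mathcal{B_N})=L^1(\Omega,\nu)$ from $L^\infty(\Omega,\nu)\subset R(I+\mathcal{A_N})\subset R(I+\mathcal{B_N})$. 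With that substitution your argument is complete; the rest of the proposal is correct and essentially identical to the paper's proof.
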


\begin{proof}
First, note that  $\mathcal{B_N}$ is $L^1(\Omega, \nu)$ and homogeneous of degree zero, that is, for all $u \in D(\mathcal{B_N})$ and $\lambda \geq 0$, $(u, v) \in \mathcal{B_N}$ implies that $(\lambda u, v) \in \mathcal{B_N}$. This is consequence of the fact that
$$T_k(r) = \lambda T_{\frac{k}{\lambda}}(r), \quad \forall \, \lambda \geq 0, \ \ r \in \R.$$

Let $P_{0}$ denote the set of all functions $p\in C^{\infty}(\R)$ satisfying $0\le p'\le 1$ such that $p'$  is compactly supported, and $x=0$ is not contained in the support $\textrm{supp}(p)$ of $p$. To prove that $\mathcal{B_N}$ is  a completely accretive operator we must show that  (see \cite{ACMBook,BCr2})
\begin{equation}\label{e1CA} \int_{\Omega}p(u_1-u_2) \, (v_1-v_2)\, d\nu\ge 0 \end{equation}
for every $p\in P_{0}$ and every $(u_i,v_i) \in \mathcal{B_N}$, $i=1,2$. In fact, given $(u_i, v_i) \in \mathcal{B_N}$, $i =1,2$, we have that there exists a vector fields $X_i \in \mathcal{D}_0^{\infty,1}(\Omega)$ with $\| X_i \|_\infty \leq 1$ such that the following conditions hold:
$$ -\mbox{div}_0(X_i) = v_i \quad \hbox{in} \ \Omega; $$
$$ (X_i, DT_k(u_i)) = |DT_k(u_i)|_\nu \quad \hbox{as measures};$$
$$ (X_i \cdot \nu_\Omega)^- =0 \qquad  | D_{\1_{\Omega}} |_\nu-a.e. \ \hbox{on} \ \partial \Omega.$$
Making a similar computation as for the monotonicity of  $\mathcal{A}_N$ in the proof of Theorem \ref{thm:neumann}, we get that for every Borel set $B \subset \Omega$ we have
$$\int_{B} (X_1 - X_2, D T_k(u_1) - DT_k(u_2)) $$ $$=  \int_{B} \vert DT_k(u_1) \vert_\nu - \int_{B} (X_1, DT_k(u_2)) +  \int_{B} \vert DT_k(u_2) \vert_\nu  - \int_{B} (X_2, DT_k(u_1)) \geq 0. $$
Hence, by \eqref{RN1},
$$ \int_{B} \theta(X_1 - X_2, D (T_k(u_1) - T_k(u_2)), x ) \, d \vert D(T_k(u_1) - T_k(u_2)) \vert_\nu $$ $$= \int_{B} (X_1 - X_2, D (T_k(u_1) - T_k(u_2))) \geq 0$$
for all Borel sets $B \subset \Omega$. Thus
$$\theta(X_1 - X_2, D (T_k(u_1) - T_k(u_2)), x ) \geq 0, \quad \vert D(T_k(u_1) - T_k(u_2)) \vert_\nu-\hbox{a.e. on} \ \Omega.$$
Now by Proposition \ref{Lipcomp}, we have that
$$\theta(X_1 - X_2, D (T_k(u_1) - T_k(u_2)), x )  = \theta(X_1 - X_2, D p(T_k(u_1) - T_k(u_2)), x ) $$
a.e. with respect to the measures $\vert D(T_k(u_1) - T_k(u_2)) \vert_\nu$ and $\vert Dp(T_k(u_1) - T_k(u_2)) \vert_\nu$. We conclude that
$$\theta(X_1 - X_2, D p(T_k(u_1) - T_k(u_2)), x ) \geq 0, \quad \vert Dp(T_k(u_1) - T_k(u_2)) \vert_\nu-\hbox{a.e. on} \ \Omega.$$
Now, we apply the Gauss-Green Formula (Theorem \ref{thm:generalgreensformula}). By Proposition \ref{Lipcomp} and the fact that $|D\chi_\Omega|_\nu$-a.e. we have $(X_i \cdot \nu_\Omega)^- = 0$, so the boundary terms disappear, we get
$$ \int_{\Omega}p(T_k(u_1)-T_k(u_2))(v_1-v_2)\, d\nu = - \int_\Omega p(T_k(u_1)-T_k(u_2)) (\mbox{div}(X_1)- \mbox{div}(X_2)) \, d\nu $$ $$=\int_\Omega (X_1 - X_2, D p(T_k(u_1)-T_k(u_2)) $$ $$= \int_\Omega \theta(X_1 - X_2, Dp(T_k(u_1)-T_k(u_2)) \, d \vert Dp(T_k(u_1)-T_k(u_2)) \vert_\nu \geq 0.$$
The inequality \eqref{e1CA} follows by letting $k \to \infty$. Therefore $\mathcal{B_N}$ is completely accretive.

We claim now that $\mathcal{B_N}$ is a closed operator. Let $(u_n, v_n) \in \mathcal{B_N}$ be such that $u_n \to u$ and $v_n \to v$ in $L^1(\Omega, \nu)$. Then, $T_k(u_n) \in BV(\Omega, d, \nu)$ for all $k>0$, and  there exist  vector fields $X_n \in \mathcal{D}_0^{\infty,1}(\Omega)$ with $\| X_n \|_\infty \leq 1$ such that the following conditions hold:
$$ -\mbox{div}_0(X_n) = v_n \quad \hbox{in} \ \Omega; $$
\begin{equation}\label{e1closed}\int_\Omega (w - T_k(u_n)) \, v_n \, d \nu \leq \int_\Omega (X_n, Dw) - \int_\Omega \vert DT_k(u_n)
\vert_\nu,\end{equation}
for all $w \in  BV(\Omega, d, \nu) \cap L^\infty(\Omega, \nu)$ and $k >0$.

By extracting a subsequence, if necessary, we can assume that
$$X_n \rightharpoonup X \quad \hbox{weakly$^*$ in} \ L^\infty(T\Omega).$$
Given $g \in W_0^{1,1}(\Omega, d, \nu)$, since $dg \in L^1(T^*\Omega)$, we have
$$\int_\Omega dg(X) d\nu = \lim_{n \to \infty} \int_\Omega dg(X_n) d\nu = - \lim_{n \to \infty} \int_\Omega  {\rm div}(X_n) g d \nu $$ $$= -\lim_{n \to \infty} \int_\Omega g v_n d \nu = - \int_\Omega g v d\nu.$$
Thus,
\begin{equation}\label{e1L1Neumann}- {\rm div}_0(X) = v,\quad \hbox{in} \ \ \Omega.
\end{equation}
Fix any $k > 0$. Taking $w =0$ in \eqref{e1closed}, we have that
$$\int_\Omega \vert DT_k(u_n) \vert_\nu \leq \int_\Omega T_k(u_n)v_n d \nu \leq k \cdot \sup \Vert v_n \Vert_{L^1(\Omega, \nu)}.$$
 Since $u_n \rightarrow u$ in $L^1(\Omega,\nu)$, the right-hand side is uniformly bounded, so the left-hand side is also uniformly bounded. By the lower semi-continuity of the total variation, we have that $T_k(u) \in BV(\Omega, d, \nu)$ and
$$\int_\Omega  |DT_k(u)|_\nu \leq \liminf_{n \to \infty} \int_\Omega |DT_k(u_n)|.$$
Then, letting $n \to \infty$ in \eqref{e1closed} we obtain that
$$\int_\Omega (w - T_k(u)) \, v \, d \nu \leq \int_\Omega (X, Dw) - \int_\Omega \vert DT_k(u) \vert_\nu.$$
Hence $(u,v) \in \mathcal{B_N}$, so $\mathcal{B_N}$ is closed.

Let us see now that the range condition
\begin{equation}\label{rangecond1}
R(I + \mathcal{B_N}) = L^1(\Omega, \nu)
\end{equation}
holds. Since $\mathcal{B_N}$ is closed, it is enough to show that
\begin{equation}\label{rangecond2}
L^\infty(\Omega, \nu) \subset R(I + \mathcal{B_N}).
\end{equation}
Now, by the proof of Theorem \ref{thm:neumann}, we have
$$R(I + \mathcal{A_N}) = L^2(\Omega, \nu),$$
from where \eqref{rangecond2} holds, having in mind Corollary \ref{inclusion1}.

Finally, since $D(\mathcal{A_N})$ is dense in $L^2(\Omega,\nu)$, by Corollary \ref{inclusion1} we obtain that $D(\mathcal{B_N})$ is dense in $L^1(\Omega, \nu)$.
\end{proof}

 As a consequence of the proof, we also get that the operator $\mathcal{A_N}$ is completely accretive, since by Proposition \ref{inclusion1} it is a restriction of $\mathcal{B_N}$ to $L^2(\Omega, \nu) \times L^2(\Omega, \nu)$.

In \cite{HM} the authors studied the regularizing effect of the nonlinear semigroup generated by $m$-accretive operator of degree zero. More precisely, as consequence of \cite[Theorem 4.13]{HM}, the following result holds.

\begin{theorem}\label{regularity} If $A$ is an operator in $L^1(Z)$ is a $m$-completely accretive operator of degree zero and $(0,0) \in A$, then, for any $u_0 \in \overline{D(A)}$, the unique mild solution $u(t)$ of the abstract Cauchy problem
\begin{equation}\label{eq:abstractcauchyHM}
u'(t) \in  A(u(t)), \quad t \geq 0. \quad u(0) = u_0
\end{equation}
is a strong solution, that is, verifies \eqref{eq:abstractcauchyHM} for almost all $t \geq 0$, and moreover
$$\left\Vert \frac{du(t) }{dt} \right\Vert_{L^1(Z)} \leq \frac{\Vert u_0 \Vert_{L^1(Z)}}{t}\quad \hbox{for every} \ t >0,$$
and
$$ \frac{du(t) }{dt}  \leq \frac{u(t)}{t}\quad \hbox{a.e. on} \ Z  \ \hbox{for every} \ t >0, \ \hbox{if} \ u_0 \geq 0.$$
\end{theorem}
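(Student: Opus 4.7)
The plan is to exploit the interplay between the degree-zero homogeneity of $A$ and the $L^1$-contraction and order-preservation properties of the semigroup $\{S(t)\}_{t\geq 0}$ generated by $-A$ via the Crandall--Liggett theorem. Since $(0,0)\in A$, one has $S(t)0=0$ and hence $\|u(t)\|_{L^1(Z)}\leq \|u_0\|_{L^1(Z)}$ for all $t\geq 0$.

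First, I would derive a scaling identity at the level of the resolvent $J_\mu=(I+\mu A)^{-1}$. From $A(\lambda u)=A(u)$ for $\lambda>0$ one checks directly that $J_\mu(\lambda v)=\lambda J_{\mu/\lambda}(v)$, and iterating this inside the exponential formula $S(t)u_0=\lim_{n\to\infty} J_{t/n}^n u_0$ produces
$$
S(t)(\lambda u_0) \;=\; \lambda\, S(t/\lambda)(u_0), \qquad \lambda>0,\; t\geq 0.
$$
Applying the $L^1$-contraction of $S(t)$ to the pair $u_0$ and $\lambda u_0$ then yields the integrated Lipschitz estimate
$$
\|\lambda\, u(t/\lambda)-u(t)\|_{L^1(Z)} \;\leq\; |\lambda-1|\,\|u_0\|_{L^1(Z)}.
$$
Reparametrising by $\tau=t/\lambda$, rearranging, and letting $\tau\to t$, the difference quotient on the left converges (after dividing by $|t-\tau|$) to $u(t)/t - u'(t)$, giving $\|u'(t)-u(t)/t\|_{L^1(Z)}\leq \|u_0\|_{L^1(Z)}/t$; combined with the trivial bound $\|u(t)/t\|_{L^1(Z)}\leq \|u_0\|_{L^1(Z)}/t$ this produces the desired estimate on $\|du/dt\|_{L^1(Z)}$.

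For the one-sided pointwise bound when $u_0\geq 0$, I would use the fact that complete accretivity implies $T$-accretivity, so that $S(t)$ is order-preserving. For $\lambda>1$, the inequality $\lambda u_0\geq u_0\geq 0$ passes through the semigroup to give $\lambda u(t/\lambda)\geq u(t)$ a.e.\ on $Z$. Writing $\lambda=1+\varepsilon$, subtracting $u(t)$, dividing by $\varepsilon$ and sending $\varepsilon\to 0^+$ yields $u(t)-t\,u'(t)\geq 0$ a.e.\ on $Z$, which is the asserted inequality.

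The main obstacle I foresee is the passage from the mild solution produced by Crandall--Liggett to a genuine strong solution --- that is, making the pointwise-in-$t$ differentiation above rigorous. Since $L^1(Z)$ does not have the Radon--Nikodym property, a Lipschitz curve with values in $L^1(Z)$ is not automatically a.e.\ differentiable, so one cannot simply divide by $|t-\tau|$ and pass to the limit in $L^1(Z)$. The clean way out is to run the scaling argument directly on the Euler iterates $J_{t/n}^n u_0$, obtaining uniform bounds on the discrete time derivatives; then one uses the closedness of $A$ together with weak-$*$ compactness of a bounded family in $L^\infty(0,T;L^1(Z))$ to extract a limit that solves the equation strongly, and one identifies this limit with $u(t)$ by uniqueness of mild solutions. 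This is the content of the regularising-effect machinery for homogeneous completely accretive operators developed in \cite{HM}.
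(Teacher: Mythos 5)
First, a point of comparison: the paper does not prove this theorem at all --- it is imported verbatim as \cite[Theorem 4.13]{HM}, so there is no internal proof to measure your argument against. Judged on its own terms, your sketch correctly reconstructs the skeleton of the B\'enilan--Crandall/Hauer--Maz\'on argument: the resolvent identity $J_\mu(\lambda v)=\lambda J_{\mu/\lambda}(v)$ and the resulting semigroup scaling $S(t)(\lambda u_0)=\lambda S(t/\lambda)(u_0)$ are correct consequences of zero-homogeneity, the contraction estimate $\|\lambda u(t/\lambda)-u(t)\|_{L^1(Z)}\le|\lambda-1|\,\|u_0\|_{L^1(Z)}$ is the right starting point, and the derivation of the one-sided bound $u'(t)\le u(t)/t$ from order preservation (which complete accretivity does supply, via $T$-accretivity) is sound.

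There are, however, two genuine gaps. (i) \emph{The constant.} Your limit argument yields $\|u'(t)-u(t)/t\|_{L^1(Z)}\le\|u_0\|_{L^1(Z)}/t$, and combining this with $\|u(t)/t\|_{L^1(Z)}\le\|u_0\|_{L^1(Z)}/t$ by the triangle inequality gives $\|u'(t)\|_{L^1(Z)}\le 2\|u_0\|_{L^1(Z)}/t$ --- a factor of $2$ worse than the stated estimate. This is not cosmetic: the constant $2$ is what the pure scaling argument gives for any homogeneous quasi-accretive operator in a general Banach space, and improving it to $1$ is precisely where the finer $L^1$ and complete-accretivity structure is exploited in \cite{HM}; your sketch gives no indication of how to do this. (ii) \emph{The mild-to-strong upgrade.} You correctly identify that $L^1(Z)$ lacks the Radon--Nikodym property, but the proposed remedy --- weak-$*$ compactness of a bounded family in $L^\infty(0,T;L^1(Z))$ --- is not an available tool, since $L^1(Z)$ is not a dual space. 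The mechanism that actually works is that complete accretivity traps the difference quotients $h^{-1}(u(t+h)-u(t))$ in a set of functions dominated (in the sense of the relation $\ll$) by a fixed element of $L^1(Z)$, which is weakly sequentially compact by Dunford--Pettis, and that a closed completely accretive operator is closed under strong--weak convergence of pairs; one then passes to the weak $L^1$ limit of the difference quotients rather than to any weak-$*$ limit in the time variable. Without this replacement the pointwise-in-$t$ differentiation steps in both halves of your argument remain formal.
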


\begin{definition}{\rm
We say that $u \in C([0, T]; L^1(\Omega, \nu)) \cap W^{1,1}([0, T]; L^1(\Omega, \nu))$ is an {\it entropy solution} of the Neumann Problem \eqref{Neumann100} in $[0,T]$ with initial data $u_0 \in L^1(\Omega, \nu)$, if  for all $k > 0$ we have $T_k u(t) \in BV(\Omega, d, \nu)$ and there exist vector fields $X(t) \in  \mathcal{D}_0^{\infty,1}(\Omega)$ with $\| X(t) \|_\infty \leq 1$ such that, for almost all $t \in [0,T]$, the following conditions hold:
$$ \mbox{div}_0(X(t)) = u_t(t, \cdot) \quad \hbox{in} \ \Omega; $$
$$ (X(t), DT_k u(t)) = |DT_ku(t)|_\nu \quad \hbox{as measures};$$
$$ (X(t) \cdot \nu_\Omega)^- =0 \qquad  | D_{\1_{\Omega}} |_\nu-a.e. \ \hbox{on} \ \partial \Omega.$$
}
\end{definition}

Then, as consequence of Theorem \ref{L1Neumann} and Theorem \ref{regularity}, we have the following existence and uniqueness result.

\begin{theorem}\label{ExistUnique1}
For any $u_0 \in L^1(\Omega, \nu)$ and all $T > 0$ there is a unique entropy solution $u(t)$ of the Neumann problem \eqref{Neumann100} in $[0,T]$. Moreover, the following comparison principle holds: if $u_1, u_2$ are  entropy solutions for the initial data $u_{1,0}, u_{2,0} \in  L^q(\Omega, \nu)$, respectively, then
\begin{equation}\label{NCompPrincipleplaplaceBV}
\Vert (u_1(t) - u_2(t))^+ \Vert_q \leq \Vert ( u_{1,0}- u_{2,0})^+ \Vert_q \quad \hbox{for all} \ 1 \leq q \leq \infty.
\end{equation}
We also have
$$\left\Vert \frac{du(t) }{dt} \right\Vert_{L^1(\Omega,\nu)} \leq \frac{\Vert u_0 \Vert_{L^1(\Omega,\nu)}}{t}\quad \hbox{for every} \ t >0,$$
and  if $u_0 \geq 0$, then additionally
$$ \frac{du(t) }{dt}  \leq \frac{u(t)}{t}\quad  \nu-\hbox{a.e. on} \ \Omega  \ \hbox{for every} \ t > 0.$$
\end{theorem}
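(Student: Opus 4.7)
The plan is to deduce everything from the abstract framework already developed: by Theorem \ref{L1Neumann} the operator $\mathcal{B}_{\mathcal{N}}$ is $m$-completely accretive in $L^1(\Omega,\nu)$, has dense domain, and is homogeneous of degree zero. First I would apply the Crandall--Liggett generation theorem to obtain, for each $u_0 \in \overline{D(\mathcal{B}_{\mathcal{N}})} = L^1(\Omega,\nu)$, a unique mild solution $u(t) = S(t) u_0 \in C([0,T]; L^1(\Omega,\nu))$ of the abstract Cauchy problem
\begin{equation*}
u'(t) + \mathcal{B}_{\mathcal{N}}(u(t)) \ni 0, \qquad u(0) = u_0,
\end{equation*}
where $\{S(t)\}_{t \geq 0}$ is the generated semigroup of $L^1$-contractions.

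Next I would upgrade mild solutions to strong solutions using the homogeneity of degree zero. Observe that $(0,0) \in \mathcal{B}_{\mathcal{N}}$ (take $X \equiv 0$ in the definition), so the hypotheses of Theorem \ref{regularity} are satisfied. This immediately yields $u \in W^{1,1}_{loc}((0,T]; L^1(\Omega,\nu))$ together with the quantitative bounds
\begin{equation*}
\Big\| \tfrac{du(t)}{dt} \Big\|_{L^1(\Omega,\nu)} \leq \frac{\|u_0\|_{L^1(\Omega,\nu)}}{t}, \qquad \tfrac{du(t)}{dt} \leq \frac{u(t)}{t} \ \ \text{if } u_0 \geq 0.
\end{equation*}
To identify this strong solution with an entropy solution in the sense of the definition, I would read off from $u'(t) \in -\mathcal{B}_{\mathcal{N}}(u(t))$, using the characterisation $(c)$ in Proposition \ref{charact1} (with $v = -u_t(t,\cdot)$): for a.e. $t$ there exists $X(t) \in \mathcal{D}_0^{\infty,1}(\Omega)$ with $\|X(t)\|_\infty \leq 1$ satisfying $\mathrm{div}_0(X(t)) = u_t(t,\cdot)$, $(X(t),DT_k u(t)) = |DT_k u(t)|_\nu$ as measures for all $k>0$, and $(X(t)\cdot\nu_\Omega)^- = 0$ on $\partial\Omega$. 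This is exactly the entropy solution property, so existence follows. Uniqueness is inherited from the $L^1$-accretivity of $\mathcal{B}_{\mathcal{N}}$: if $u_1,u_2$ are two entropy solutions, then they are strong solutions of the abstract Cauchy problem, so $\|u_1(t)-u_2(t)\|_{L^1(\Omega,\nu)} \leq \|u_{1,0}-u_{2,0}\|_{L^1(\Omega,\nu)}$, and with equal initial data this forces $u_1 \equiv u_2$.

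Finally, the comparison principle in every $L^q$ follows from complete accretivity. By the Benilan--Crandall characterisation recalled in the proof of Theorem \ref{L1Neumann}, one has $\int_\Omega j(S(t) u_{1,0} - S(t) u_{2,0})\, d\nu \leq \int_\Omega j(u_{1,0} - u_{2,0})\, d\nu$ for every convex $j\colon\mathbb{R}\to[0,\infty)$ with $j(0)=0$. Specialising to $j(r) = ((r)^+)^q$ yields \eqref{NCompPrincipleplaplaceBV} for $1 \leq q < \infty$, and the case $q=\infty$ is obtained by passing to the limit $q\to\infty$ (or by applying the property with $j(r)=(r-k)^+$ for each $k \geq \|(u_{1,0}-u_{2,0})^+\|_\infty$). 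I expect the main technical point to be the identification of entropy solutions with strong solutions of the abstract Cauchy problem: the nontrivial direction requires that given an entropy solution in the sense of the definition, one can recognize that $-u_t(t)\in \mathcal{B}_{\mathcal{N}}(u(t))$ for a.e.\ $t$, which is precisely the content of the equivalence $(a)\Leftrightarrow(c)$ in Proposition \ref{charact1}; the remaining regularization estimates follow directly by plugging into Theorem \ref{regularity}.
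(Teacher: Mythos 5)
Your proposal is correct and follows essentially the same route as the paper, which obtains the theorem directly as a consequence of Theorem \ref{L1Neumann} (the $m$-complete accretivity, density of the domain, and zero-homogeneity of $\mathcal{B}_{\mathcal{N}}$) combined with the regularizing result of Theorem \ref{regularity}, with the comparison principle coming from complete accretivity. Your write-up simply makes explicit the intermediate steps (Crandall--Liggett generation, the observation that $(0,0)\in\mathcal{B}_{\mathcal{N}}$, and the identification of entropy solutions with strong solutions via Proposition \ref{charact1}) that the paper leaves implicit.
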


The comparison principle given in equation \eqref{NCompPrincipleplaplaceBV} is a consequence of the complete accretivity of the operator $\mathcal{B_N}$.

Actually, this notion of solutions for $L^1$ initial data can also be applied not only for the Neumann problem, but also for the total variation flow defined on the whole space, considered for instance in \cite{GM2021}. This is formalised in the following Remark.

\begin{remark}\label{rem:wholespace} {\rm
Suppose that $\nu(\mathbb{X}) < \infty$. Consider the energy functional
\begin{equation*}
\mathcal{TV}(u) = \twopartdef{|Du|_\nu}{u \in BV(\mathbb{X},d,\nu) \cap L^2(\mathbb{X},\nu)}{+\infty}{L^2(\mathbb{X},\nu) \setminus BV(\mathbb{X},d,\nu)}
\end{equation*}
and its gradient flow in $L^2(\mathbb{X},\nu)$
\begin{equation}\label{eq:onthewholespace}
\twopartdef{u'(t) + \partial \mathcal{TV}(u(t)) \ni 0,}{t \in [0,T];}{u(0) = u_0.}{}
\end{equation}
By theory of maximal monotone operators, there exists a unique solution of \eqref{eq:onthewholespace} for initial data $u_0 \in L^2(\mathbb{X},\nu)$ and its characterisation in terms of Anzellotti pairings was given in \cite{GM2021}. Working in a similar way as in this subsection, we may also introduce the notion of entropy solutions valid for initial data in $L^1(\mathbb{X},\nu)$; the proofs are actually simpler because the boundary terms do not appear in the computation. For the purpose of this Remark only, denote by $(X,Du)$ the Anzellotti pairing introduced in \cite{GM2021}, i.e. as in Definition \ref{dfn:Anzellotti}, but with the divergence $\mbox{div}$ in place of $\mbox{div}_0$.

With this understood, we say that $u \in C([0, T]; L^1(\mathbb{X}, \nu)) \cap W^{1,1}([0, T]; L^1(\mathbb{X}, \nu))$ is an {\it entropy solution} of the Cauchy problem \eqref{eq:onthewholespace} in $[0,T]$ with initial data $u_0 \in L^1(\mathbb{X}, \nu)$, if  for all $k > 0$ we have $T_k u(t) \in BV(\mathbb{X}, d, \nu)$ and there exist vector fields $X(t) \in  \mathcal{D}^{\infty,1}(\mathbb{X})$ with $\| X(t) \|_\infty \leq 1$ such that for almost all $t \in [0,T]$ the following conditions hold:
$$ \mbox{div}_0(X(t)) = u_t(t, \cdot) \quad \hbox{in} \ \mathbb{X}; $$
$$ (X(t), DT_k u(t)) = |DT_ku(t)|_\nu \quad \hbox{as measures}.$$
For any $u_0 \in L^1(\mathbb{X},\nu)$ and $T > 0$, there exists a unique entropy solution of the Cauchy problem \eqref{eq:onthewholespace} in $[0,T]$, and it satisfies the comparison principle and estimates given in Theorem \ref{ExistUnique1}.
}
\end{remark}

\section{The Dirichlet Problem for the Total Variation Flow}\label{sec:dirichlet}

In this section we study  the Dirichlet problem
\begin{equation}\label{Dirichlet100}
\left\{ \begin{array}{lll} u_t (t,x) = {\rm div} \left(\frac{ D u (t,x)}{\vert Du(t,x) \vert_\nu } \right)  \quad &\hbox{in} \ \ (0, T) \times \Omega, \\[5pt] u (t,x) = f(x) \quad &\hbox{in} \ \ (0, T) \times  \partial\Omega, \\[5pt] u(0,x) = u_0(x) \quad & \hbox{in} \ \  \Omega. \end{array} \right.
\end{equation}

Given $f \in L^1(\partial \Omega, \mathcal{H})$ we consider the energy functional $\mathcal{TV}_f : L^2(\Omega, \nu) \rightarrow [0, + \infty]$ defined by
\begin{equation}\label{L2fuctme}
\mathcal{TV}_f (u):= \left\{ \begin{array}{ll} \vert Du \vert_\nu (\Omega) + \displaystyle\int_{\partial \Omega} \vert  T_\Omega(u) - f \vert \, d |D\1_\Omega|_\nu   \quad &\hbox{if} \ u \in BV(\Omega, d, \nu) \cap L^2(\Omega,\nu), \\[10pt] + \infty \quad &\hbox{if} \ u \in  L^2(\Omega, \nu) \setminus BV(\Omega, d, \nu).\end{array}\right.
\end{equation}

 The following result was proved in \cite[Proposition 4.3]{GM2021-2}.

\begin{proposition}\label{semicont}
Suppose that $\Omega$ and $\mathbb{X} \backslash \overline{\Omega}$ satisfy the assumptions of Theorem \ref{thm:traces}. Then, the functional is convex and lower semicontinuous with respect to convergence in $L^1(\Omega,\nu)$.
\end{proposition}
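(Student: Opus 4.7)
The plan is to prove the two assertions separately, with convexity being immediate and lower semicontinuity requiring a reduction to the standard $L^1$-lower semicontinuity of the total variation on a larger domain.

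For \emph{convexity}, I would argue directly from the form of the functional. The total variation $u \mapsto |Du|_\nu(\Omega)$ is convex on $BV(\Omega,d,\nu)$: this is immediate from the relaxation formula \eqref{dfn:totalvariationonmetricspaces}, since given two approximating sequences, a convex combination produces an admissible approximating sequence for the convex combination of the limits, and upper gradients behave subadditively. The boundary penalty $u \mapsto \int_{\partial\Omega} |T_\Omega u - f| \, d|D\chi_\Omega|_\nu$ is convex because $T_\Omega$ is linear by Theorem \ref{thm:traces} and $t \mapsto |t-f|$ is convex, so the integral of a convex composition is convex.

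For \emph{lower semicontinuity}, the key idea is to reinterpret $\mathcal{TV}_f(u)$ as a total variation on a larger set, up to a constant independent of $u$. I would fix an open bounded set $\Omega' \Supset \overline{\Omega}$ (for instance a large ball containing $\overline{\Omega}$) to which Theorem \ref{thm:traces} applies. Using the bounded extension operator described in Theorem \ref{thm:traces}, applied now to the domain $\mathbb{X} \setminus \overline{\Omega}$ (whose regularity is guaranteed by our hypothesis on $\mathbb{X} \setminus \overline{\Omega}$), I would pick once and for all a function $F \in BV(\Omega' \setminus \overline{\Omega}, d, \nu)$ whose trace on $\partial\Omega$ equals $f$. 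Given any $u \in BV(\Omega,d,\nu) \cap L^2(\Omega,\nu)$, define its glued extension
$$
\tilde{u} := \chi_\Omega u + \chi_{\Omega' \setminus \Omega} F \in L^1(\Omega',\nu).
$$
The central computation is the gluing formula
$$
|D\tilde{u}|_\nu(\Omega') = |Du|_\nu(\Omega) + \int_{\partial\Omega} |T_\Omega u - f| \, d|D\chi_\Omega|_\nu + |DF|_\nu(\Omega' \setminus \overline{\Omega}),
$$
which decomposes the total variation of $\tilde{u}$ into the bulk contributions from each side of $\partial\Omega$ together with a jump term measured against the perimeter measure of $\Omega$. Once this formula is in hand, setting $C := |DF|_\nu(\Omega' \setminus \overline{\Omega})$, which is a constant independent of $u$, we obtain
$$
\mathcal{TV}_f(u) = |D\tilde{u}|_\nu(\Omega') - C.
$$
If $u_n \to u$ in $L^1(\Omega,\nu)$, then $\tilde{u}_n \to \tilde{u}$ in $L^1(\Omega',\nu)$, since on $\Omega' \setminus \Omega$ all the extensions coincide with the fixed function $F$. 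The $L^1$-lower semicontinuity of $|D \cdot|_\nu(\Omega')$ recalled after \eqref{dfn:totalvariationonmetricspaces} then yields
$$
\mathcal{TV}_f(u) = |D\tilde{u}|_\nu(\Omega') - C \le \liminf_{n \to \infty}\bigl(|D\tilde{u}_n|_\nu(\Omega') - C\bigr) = \liminf_{n \to \infty} \mathcal{TV}_f(u_n).
$$

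The main obstacle is the gluing formula: identifying the portion of $|D\tilde{u}|_\nu$ concentrated on $\partial\Omega$ as $|T_\Omega u - f| \, |D\chi_\Omega|_\nu$. In the Euclidean case this is classical, but in the metric setting it requires combining the trace theory of Theorem \ref{thm:traces} with the Ahlfors codimension-one regularity \eqref{eq:codimensiononeregularity} and the coincidence of $L^p(\partial\Omega,\mathcal{H})$ with $L^p(\partial\Omega,|D\chi_\Omega|_\nu)$. This technical point is precisely what the hypotheses on $\Omega$ and $\mathbb{X} \setminus \overline{\Omega}$ are designed to furnish, so once available the remainder of the argument is routine.
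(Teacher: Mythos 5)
The paper gives no proof of its own here (it cites \cite[Proposition 4.3]{GM2021-2}), but your argument is the standard one for this statement and matches the route that reference takes: convexity from linearity of $T_\Omega$ and subadditivity of upper gradients, and lower semicontinuity by gluing $u$ to a fixed $BV$ extension $F$ of $f$ on an annular neighbourhood of $\partial\Omega$ and invoking the $L^1$-lower semicontinuity of the total variation on the enlarged set. The gluing identity you state but do not prove is precisely \cite[Proposition 5.11]{HKLL}, which the authors themselves invoke for exactly this decomposition in Remark \ref{VariationalSol}, so treating it as a citable lemma is legitimate and your proof is correct.
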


By Proposition \ref{semicont},  if $\Omega$ and $\mathbb{X} \backslash \overline{\Omega}$ satisfy the assumptions of Theorem \ref{thm:traces}, we have that $\mathcal{TV}_f$ is lower semi-continuous with respect to the $L^2(\Omega,\nu)$-convergence. Then, by the theory of maximal monotone operators (see \cite{Brezis}) for every initial datum $u_0 \in \overline{D(\partial \mathcal{TV}_f)}^{L^2(\Omega,\nu)}$, there is a unique strong solution of the abstract Cauchy problem
\begin{equation}\label{eq:abstractcauchyproblemdirichlet}
\left\{ \begin{array}{ll} u'(t) + \partial \mathcal{TV}_f(u(t)) \ni 0, \quad t \in [0,T] \\[10pt] u(0) = u_0. \end{array}\right.
\end{equation}

To characterize the subdifferential of $\mathcal{TV}_f$, we define the following operator.

\begin{definition}{\rm $(u,v) \in \mathcal{A}_f$ if and only if $u, v \in L^2(\Omega, \nu)$, $u \in BV(\Omega, d, \nu)$ and  there exists a vector field  $X \in \mathcal{D}_0^{\infty,2}(\Omega)$ with $\| X \|_\infty \leq 1$ such that the following conditions hold:
$$ -\mbox{div}_0(X) = v \quad \hbox{in} \ \Omega; $$
$$ (X, Du) = |Du|_\nu \quad \hbox{as measures};$$
$$ (X \cdot \nu_\Omega)^- \in \mbox{sign}(T_\Omega u - f) \qquad  | D_{\1_{\Omega}} |_\nu-a.e. \ \hbox{on} \ \partial \Omega.$$

}
\end{definition}

To get the characterization of $\partial \mathcal{TV}_f$, we use again the Fenchel-Rockafellar duality Theorem (Theorem \ref{FRTh}). The outline of the proof will be similar to the one in Theorem \ref{thm:neumann}, with some additional difficulties arising from the appearance of boundary terms.

\begin{theorem}\label{thm:dirichlet}
Suppose that $\Omega$ and $\mathbb{X} \backslash \overline{\Omega}$ satisfy the assumptions of Theorem \ref{thm:traces}. Then,
$$\partial \mathcal{TV}_f = \mathcal{A}_f,$$
and $D(\mathcal{A}_f)$ is dense in $L^2(\Omega, \nu)$. Moreover, the operator $\mathcal{A}_f$ is $m$-completely accretive in $L^2(\Omega,\nu)$.
\end{theorem}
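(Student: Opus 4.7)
The plan is to follow the blueprint of Theorem \ref{thm:neumann}, with the essential novelty that the Fenchel conjugate of the boundary functional is no longer trivial and the sign condition $(X \cdot \nu_\Omega)^- \in \mbox{sign}(T_\Omega u - f)$ must be read off the $\varepsilon$-subdifferentiability relations. First I would check monotonicity of $\mathcal{A}_f$ and the inclusion $\mathcal{A}_f \subset \partial \mathcal{TV}_f$. Given pairs $(u_i, v_i) \in \mathcal{A}_f$ with associated vector fields $X_i$, Theorem \ref{thm:generalgreensformula} yields the interior term $\int_\Omega (X_1 - X_2, Du_1 - Du_2) \geq 0$, handled as in Theorem \ref{thm:neumann}, plus a boundary term of the form
$$ \int_{\partial\Omega} (T_\Omega u_1 - T_\Omega u_2)\bigl[(X_1 \cdot \nu_\Omega)^- - (X_2 \cdot \nu_\Omega)^-\bigr] \, d|D\chi_\Omega|_\nu, $$
which is non-negative because $t \mapsto \mbox{sign}(t-f)$ is monotone non-decreasing. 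The inclusion $\mathcal{A}_f \subset \partial \mathcal{TV}_f$ is similar: for $w \in BV(\Omega, d, \nu) \cap L^2(\Omega, \nu)$, the interior contribution is bounded using Proposition \ref{prop:boundonAnzellottipairing}, while the boundary contribution $\int_{\partial\Omega} (T_\Omega w - T_\Omega u)(X \cdot \nu_\Omega)^- \, d|D\chi_\Omega|_\nu$, by the sign condition and $|(X \cdot \nu_\Omega)^-| \leq 1$, is dominated by $\int_{\partial\Omega} |T_\Omega w - f| \, d|D\chi_\Omega|_\nu - \int_{\partial\Omega} |T_\Omega u - f| \, d|D\chi_\Omega|_\nu$.

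The central step is proving the range condition $R(I + \mathcal{A}_f) = L^2(\Omega, \nu)$, which by Minty's theorem combined with the maximal monotonicity of $\partial \mathcal{TV}_f$ forces the desired equality. I would proceed via Fenchel-Rockafellar duality exactly as in Theorem \ref{thm:neumann}, with the same $U$, $V$, and operator $A u = (T_\Omega u, du)$, but now taking
$$ E(p_0, \overline{p}) = \int_{\partial\Omega} |p_0 - f| \, d|D\chi_\Omega|_\nu + \| \overline{p} \|_{L^1(T^* \Omega)}, \qquad G(u) = \frac{1}{2} \int_\Omega u^2 \, d\nu - \int_\Omega u g \, d\nu. $$
A direct computation of the conjugates gives $E_0^*(p_0^*) = \int_{\partial\Omega} f p_0^* \, d|D\chi_\Omega|_\nu$ if $|p_0^*| \leq 1$ $|D\chi_\Omega|_\nu$-a.e.\ and $+\infty$ otherwise, while $E_1^*$ remains the indicator of the unit $L^\infty$-ball and $G^*(u^*) = \frac{1}{2} \int_\Omega (u^*+g)^2 \, d\nu$. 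The identification of the adjoint operator and the constraint $p_0^* = (\overline{p}^* \cdot \nu_\Omega)^-$ go through verbatim using the Gauss-Green formula and the density argument leading to \eqref{eq:pandpoagree}. The primal problem $\min_{v \in L^2(\Omega, \nu)} \{ \mathcal{TV}_f(v) + G(v) \}$ admits a unique minimiser $u$ by coercivity and the lower semi-continuity recalled in Proposition \ref{semicont}; approximating $u$ by $u_n \in \mbox{Lip}_{loc}(\Omega)$ with $T_\Omega u_n = T_\Omega u$ converging strictly in $BV$ and in $L^2$ via Lemma \ref{lem:goodapproximation}, and plugging into the $\varepsilon$-subdifferentiability relations with respect to a dual maximiser $p^*$, the $G$-relation \eqref{eq:epsilonsubdiff21N} yields $-\mbox{div}_0(\overline{p}^*) = u - g$.

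The $E$-relation \eqref{eq:epsilonsubdiff11N} is where the new information appears: using $T_\Omega u_n = T_\Omega u$, it reads
$$ 0 \leq \int_{\partial\Omega} \bigl[ |T_\Omega u - f| + p_0^*(T_\Omega u - f) \bigr] d|D\chi_\Omega|_\nu + \int_\Omega |Du_n|_\nu + \int_\Omega du_n(\overline{p}^*) \, d\nu \leq \varepsilon_n. $$
The boundary integrand is pointwise non-negative thanks to $|p_0^*| \leq 1$, and the interior sum converges, by strict BV convergence and two applications of Theorem \ref{thm:generalgreensformula} using $-\mbox{div}_0(\overline{p}^*) = u - g$, to $|Du|_\nu(\Omega) + \int_\Omega (\overline{p}^*, Du)$, which is itself non-negative by Proposition \ref{prop:boundonAnzellottipairing}. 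Both contributions therefore vanish in the limit; setting $X = -\overline{p}^*$, the interior part gives $(X, Du) = |Du|_\nu$ as measures, while the boundary part gives $(X \cdot \nu_\Omega)^-(T_\Omega u - f) = |T_\Omega u - f|$ $|D\chi_\Omega|_\nu$-a.e., that is, $(X \cdot \nu_\Omega)^- \in \mbox{sign}(T_\Omega u - f)$. Hence $(u, g - u) \in \mathcal{A}_f$, which establishes the range condition and thus $\partial \mathcal{TV}_f = \mathcal{A}_f$.

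Density of $D(\mathcal{A}_f)$ in $L^2(\Omega, \nu)$ follows from the general inclusion $D(\partial \mathcal{TV}_f) \subset D(\mathcal{TV}_f) \subset \overline{D(\partial \mathcal{TV}_f)}^{L^2}$ together with the fact that $D(\mathcal{TV}_f)$ contains all Lipschitz functions. For $m$-complete accretivity of $\mathcal{A}_f$, I would mirror the argument in Theorem \ref{L1Neumann}: for $p \in P_0$ and $(u_i,v_i) \in \mathcal{A}_f$, the Gauss-Green identity yields the interior contribution, handled by Proposition \ref{Lipcomp} as in the Neumann case, together with the boundary integral
$$ \int_{\partial\Omega} p(T_\Omega u_1 - T_\Omega u_2)\bigl[(X_1 \cdot \nu_\Omega)^- - (X_2 \cdot \nu_\Omega)^-\bigr] d|D\chi_\Omega|_\nu, $$
which is non-negative because $p$ is non-decreasing with $p(0) = 0$, so $p(T_\Omega u_1 - T_\Omega u_2)$ shares the sign of $T_\Omega u_1 - T_\Omega u_2$, and this sign coincides by a brief case analysis with the sign of the bracketed difference under the constraint $(X_i \cdot \nu_\Omega)^- \in \mbox{sign}(T_\Omega u_i - f)$. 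The main obstacle I expect is precisely this $\varepsilon$-subdifferentiability step, where the boundary and interior pieces must be split and shown to vanish separately; this is the mechanism that produces the new sign condition on $\partial\Omega$ and distinguishes the proof from the Neumann case.
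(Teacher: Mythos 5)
Your proposal is correct and follows essentially the same route as the paper: monotonicity and the inclusion $\mathcal{A}_f \subset \partial\mathcal{TV}_f$ via the Gauss--Green formula, the range condition via Fenchel--Rockafellar duality with $E_0(p_0)=\int_{\partial\Omega}|p_0-f|\,d|D\chi_\Omega|_\nu$, the identification $p_0^*=(\overline{p}^*\cdot\nu_\Omega)^-$, and the extraction of the sign condition and the measure equality from the two $\varepsilon$-subdifferentiability relations along a fixed-trace strictly convergent minimising sequence, with density and complete accretivity handled exactly as you describe. The only cosmetic difference is that the paper concludes the boundary term vanishes by noting it is constant in $n$ while the total is squeezed to zero, whereas you additionally observe pointwise non-negativity of the boundary integrand; both arguments are valid and rely on the same facts.
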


\begin{proof}
The operator $\mathcal{A}_f$ is monotone. In fact, given $(u_i, v_i) \in \mathcal{A}_f$, $i = 1, 2$, there exist vector fields $X_i \in \mathcal{D}_0^{\infty,2}(\Omega)$ with $\| X_i \|_\infty \leq 1$ such that the following conditions hold:
$$  -\mbox{div}_0(X_i) = v_i \quad \hbox{in} \ \Omega; $$
$$ (X_i, Du_i) = |Du_i|_\nu \quad \hbox{as measures};$$
$$ (X_i \cdot \nu_\Omega)^- \in \mbox{sign}(T_\Omega u_i-f) \qquad  | D_{\1_{\Omega}} |_\nu-a.e. \ \hbox{on} \ \partial \Omega.$$
Then, applying Green Formula, we have
$$\int_\Omega (v_1 - v_2) (u_1 - u_2) \, d\nu = - \int_\Omega (u_1 - u_2) (\mbox{div}_0(X_1)- \mbox{div}_0(X_2)) \, d\nu $$ $$=\int_\Omega (X_1 - X_2, Du_1 - Du_2) + \int_{\partial \Omega}  (T_\Omega u_1 -f + f - T_\Omega u_2) \, ((X_1 \cdot \nu_\Omega)^- - (X_2 \cdot \nu_\Omega)^-) \, d| D_{\1_{\Omega}} |_\nu $$
$$=  \int_\Omega \vert Du_1 \vert_\nu - \int_\Omega (X_1, Du_2) +  \int_\Omega \vert Du_2 \vert_\nu - \int_\Omega (X_2, Du_1) + \int_{\partial \Omega} \vert T_\Omega u_1 - f\vert \, d| D_{\1_{\Omega}} |_\nu $$
$$- \int_{\partial \Omega}(T_\Omega u_1 - f) (X_2 \cdot \nu_\Omega)^- \, d| D_{\1_{\Omega}} |_\nu  + \int_{\partial \Omega} \vert T_\Omega u_2 - f\vert \, d| D_{\1_{\Omega}} |_\nu  - \int_{\partial \Omega}(T_\Omega u_2 - f) (X_1 \cdot \nu_\Omega)^- \, d| D_{\1_{\Omega}} |_\nu, $$
and consequently,
$$\int_\Omega (v_1 - v_2) (u_1 - u_2) d\nu \geq 0.$$

Let us see now that $\mathcal{A}_f \subset \partial \mathcal{TV}_f$. In fact, let $(u,v) \in \mathcal{A}_f$. Then, given $w \in BV(\Omega, d,\nu) \cap L^2(\Omega, \nu)$, we have
$$ \int_\Omega v(w-u) d \nu = - \int_\Omega  \mbox{div}_0(X)(w -u) \, d\nu = \int_\Omega (X, D(w-u)) + \int_{\partial \Omega} (T_\Omega w - T_\Omega u) (X \cdot \nu_\Omega)^- \, d| D_{\1_{\Omega}} |_\nu $$
$$= \int_\Omega (X, Dw)- \int_\Omega \vert Du \vert_\nu  +  \int_{\partial \Omega} (T_\Omega w - f) (X \cdot \nu_\Omega)^- \, d| D_{\1_{\Omega}} |_\nu -   \int_{\partial \Omega} \vert T_\Omega u - f \vert \, d| D_{\1_{\Omega}} |_\nu  $$ $$\leq \mathcal{TV}_f(w) - \mathcal{TV}_f(u),$$
and consequently, $(u,v) \in \partial \mathcal{TV}_f$.

Now, by Proposition \ref{semicont}, $\partial \mathcal{TV}_f$ is maximal monotone. Then, if $\mathcal{A}_f$ satisfies the range condition, by Minty Theorem the operator $\mathcal{A}_f$ is maximal monotone and consequently $\partial \mathcal{TV}_f = \mathcal{A}_f$. In order to finish the proof, we need to see that $\mathcal{A}_f$ satisfies the range condition, i.e.
\begin{equation}\label{RCond}
\hbox{Given} \ g \in  L^2(\Omega, \nu), \ \exists \, u \in D(\mathcal{A}_f) \ s.t. \ \  u + \mathcal{A}_f(u) \ni g.
\end{equation}
 We can rewrite the above as
$$u + \mathcal{A}_f(u) \ni g \iff (u, g-u) \in \mathcal{A}_f,$$
so we need to show that there exists a vector field  $X \in \mathcal{D}_0^{\infty,2}(\Omega)$ with $\| X \|_\infty \leq 1$ such that the following conditions hold:
\begin{equation}\label{1RCond}
-\mbox{div}_0(X) = g-u \quad \hbox{in} \ \Omega;
\end{equation}
\begin{equation}\label{2RCond}
(X, Du) = |Du|_\nu \quad \hbox{as measures};
\end{equation}
\begin{equation}\label{3RCond}
(X \cdot \nu_\Omega)^- \in \mbox{sign}(T_\Omega u - f) \qquad  | D_{\1_{\Omega}} |_\nu-a.e. \ \hbox{on} \ \partial \Omega.
\end{equation}
As in the proof of Theorem \ref{thm:neumann}, we will prove \eqref{RCond} by means of the Fenchel-Rockafellar duality theorem. We will employ the same function spaces and operator $A$ as in that proof. We will also use the same functionals $E_1$ and $G$, the only difference being that the functional $E_0$ now incorporates the Dirichlet boundary condition. For convenience, we shortly present again the whole framework.

We set $U = W^{1,1}(\Omega,d,\nu) \cap L^2(\Omega,\nu)$, $V = L^1(\partial\Omega,|D\1_{\Omega}|_\nu) \times L^1(T^{*} \Omega)$, and the operator $A: U \rightarrow V$ is defined by the formula
\begin{equation*}
Au = ( T_\Omega u, du),
\end{equation*}
where  $T_\Omega: BV(\Omega,d,\nu) \rightarrow L^1(\partial\Omega, |D\chi_\Omega|_\nu)$ is the trace operator in the sense of Definition \ref{dfn:trace}(again, recall that under our assumptions the spaces $L^1(\partial\Omega,\mathcal{H})$ and $L^1(\partial\Omega, |D\chi_\Omega|_\nu)$ coincide as sets and have equivalent norms) and $du$ is the differential of $u$ in the sense of Definition \ref{dfn:differential}. Hence, $A$ is a linear and continuous operator. Moreover, the dual spaces to $U$ and $V$ are
\begin{equation*}
U^* = (W^{1,1}(\Omega,d,\nu) \cap L^2(\Omega,\nu))^*, \qquad V^* =  L^\infty(\partial\Omega,|D\1_{\Omega}|_\nu) \times L^\infty(T\Omega).
\end{equation*}
We denote the points $p \in V$ in the following way: $p = (p_0, \overline{p})$, where $p_0 \in L^1(\partial\Omega, |D\chi_\Omega|_\nu)$ and $\overline{p} \in L^1(T^{*} \Omega)$. We will also use a similar notation for points $p^* \in V^*$. Then, we set $E: L^1(\partial\Omega, |D\chi_\Omega|_\nu) \times L^1(T^{*} \Omega) \rightarrow \mathbb{R}$ by the formula
\begin{equation}\label{Ieq:definitionofE}
E(p_0, \overline{p}) = E_0(p_0) + E_1(\overline{p}), \quad E_0(p_0) = \int_{\partial\Omega} |p_0 - f| \, d|D\1_{\Omega}|_\nu, \quad E_1(\overline{p}) = \| \overline{p} \|_{L^1(T^{*}  \Omega)}.
\end{equation}
We also set $G:W^{1,1}(\Omega,d,\nu) \cap L^2(\Omega,\nu) \rightarrow \mathbb{R}$ by
$$G(u):= \frac12 \int_\Omega u^2 \, d\nu - \int_\Omega ug \, d\nu.$$
The functional $G^* : (W^{1,1}(\Omega,d,\nu) \cap L^2(\Omega,\nu))^* \rightarrow [0,+\infty ]$ is given by
$$G^*(u^*) = \displaystyle\frac12 \int_\Omega (u^*  + g)^2 \, d\nu.$$
Since the operator $A$ is defined by the same formula as in the proof of Theorem \ref{thm:neumann}, conditions \eqref{eq:neumanndiv0} and \eqref{eq:pandpoagree} remain true, so
\begin{equation}\label{eq:dirichletdiv0}
\mbox{div}_0(\overline{p}^*) = - A^* p^*
\end{equation}
and
$$p_0^* = (\overline{p}^* \cdot \nu_\Omega)^- \qquad | D_{\1_{\Omega}} |_\nu-\mbox{a.e. on } \partial\Omega.$$
In particular, $\overline{p}^* \in \mathcal{D}_0^{\infty,2}(\Omega)$. Therefore,
\begin{equation}\label{eq:formulaforgstar2}
G^*(A^*p^*) = \displaystyle\frac12 \int_\Omega ( -\mbox{div}_0(\overline{p}^*)  + g)^2 d\nu.
\end{equation}
Now, let us compute the convex conjugate of the functional $E_0$. The functional $E_0^*: L^\infty(\partial\Omega, |D\chi_\Omega|_\nu) \rightarrow \mathbb{R} \cup \{ \infty \}$ is given by the formula
$$E_0^*(p_0^*) = \left\{ \begin{array}{ll} \displaystyle\int_{\partial\Omega} f \, p_0^* \, d|D\1_{\Omega}|_\nu, \quad &\hbox{if} \ \ |p_0^*| \leq 1, \\[10pt] +\infty,\quad &\hbox{otherwise.}   \end{array}  \right. $$
In fact, we only need to observe that
$$E_0^*(p_0^*) = \sup_{p_0 \in L^1(\partial\Omega, |D\chi_\Omega|_\nu)} \left\{ \int_{\partial \Omega} p_0 \, p_0^* \, d|D\1_{\Omega}|_\nu - \int_{\partial\Omega} |p_0 - f| \, d|D\1_{\Omega}|_\nu \right\}$$
$$= \sup_{p_0 \in L^1(\partial\Omega,|D\chi_\Omega|_\nu)} \left\{ \int_{\partial \Omega} (p_0 - f) \, p_0^* \, d|D\1_{\Omega}|_\nu + \int_{\partial\Omega} f \, p_0^* \, d|D\1_{\Omega}|_\nu  - \int_{\partial\Omega} |p_0 - f| \, d|D\1_{\Omega}|_\nu \right\}.$$
 The functional $E_1^*: L^\infty(T \Omega) \rightarrow [0,\infty]$ was already computed in the proof of Theorem \ref{thm:neumann} and is given by the formula
$$E_1^*(\overline{p}^*) = \left\{ \begin{array}{ll}0, \quad &  \|\overline{p}^*(x) \|_{L^\infty(T\Omega)} \leq 1; \\[10pt] +\infty,\quad &\hbox{otherwise}.\end{array}  \right.$$

 Now, we will apply the Fenchel-Rockafellar duality theorem to the primal problem of the form
\begin{equation}\label{primal1}
\inf_{u \in U} \bigg\{ E(Au) + G(u) \bigg\}
\end{equation}
with $E$ and $G$ defined as above. For $u_0 \equiv 0$ we have $E(Au_0) = \int_{\partial\Omega} |f| \, d|D\1_{\Omega}|_\nu < \infty$, $G(u_0) = 0 < \infty$ and $E$ is continuous at $0$. Then, by the Fenchel-Rockafellar duality theorem we have
\begin{equation}\label{DFR1-TVflow}\inf \eqref{eq:primal} = \sup \eqref{eq:dual}
\end{equation}
and
\begin{equation}\label{DFR2-TVflow} \hbox{the dual problem \eqref{eq:dual} admits at least one solution,}
\end{equation}
 where the dual problem is given by
\begin{equation}\label{dual1}
\sup_{p^* \in L^\infty(\partial\Omega, |D\chi_\Omega|_\nu) \times L^\infty(T  \Omega)} \bigg\{ - E_0^*(-p_0^*) - E_1^*(\overline{p}^*) - G^*(A^* p^*) \bigg\}.
\end{equation}
Keeping in mind the above calculations, we set $\mathcal{Z}$ to be the subset of $V^*$ such that the dual problem does not immediately return $-\infty$, namely
$$
\mathcal{Z} = \bigg\{ p^* \in L^\infty(\partial\Omega, |D\chi_\Omega|_\nu) \times \mathcal{D}_0^{\infty,2}(\Omega): \,  \, \| \overline{p}^* \|_\infty \leq 1; \, \| p_0^* \|_\infty \leq 1; \, p_0^* = (\overline{p}^* \cdot \nu_\Omega)^- \bigg\}.
$$
Hence, we may rewrite the dual problem as
\begin{equation}\label{dual2}
\sup_{p^* \in \mathcal{Z}} \bigg\{ - E_0^*(-p_0^*)- G^*(A^* p^*)\bigg\},
\end{equation}
so finally the dual problem takes the form
\begin{equation}\label{dual3}
\sup_{p^* \in \mathcal{Z}} \bigg\{ - \int_{\partial\Omega} f \, p_0^* \, d|D\1_{\Omega}|_\nu - G^*(A^* p^*)\bigg\}.
\end{equation}
In light of the constraint $p_0^* = (\overline{p}^* \cdot \nu_\Omega)^-$, we may simplify the dual problem a bit. We set
$$
\mathcal{Z}' = \bigg\{ \overline{p}^* \in  \mathcal{D}_0^{\infty,2}(\Omega): \, \, \| \overline{p}^* \|_\infty \leq 1 \bigg\},
$$
note that in light of $p_0^* = (\overline{p}^* \cdot \nu_\Omega)^-$, the constraint $\| p_0^* \|_\infty \leq 1$ is automatically satisfied.  Then, using formula \eqref{eq:formulaforgstar2}, we may rewrite the dual problem as
\begin{equation}\label{dual4}
\sup_{\overline{p}^* \in \mathcal{Z}'} \bigg\{ - \int_{\partial\Omega} f \, (\overline{p}^* \cdot \nu_\Omega)^- \, d|D\1_{\Omega}|_\nu - \displaystyle\frac12 \int_\Omega ( -\mbox{div}_0(\overline{p}^*)  + g)^2 \, d\nu \bigg\}.
\end{equation}

Now, consider the energy functional $\mathcal{G}_f : L^2(\Omega, \nu) \rightarrow (-\infty, + \infty]$ defined by
\begin{equation}\label{11fuctme}
\mathcal{G}_f (v):= \left\{ \begin{array}{ll} \mathcal{TV}_f(v) + G(v),   \quad &\hbox{if} \ v \in BV(\Omega, d, \nu) \cap L^2(\Omega, \nu), \\ \\ + \infty \quad &\hbox{if} \ v \in  L^2(\Omega, \nu) \setminus BV(\Omega, d, \nu).\end{array}\right.
\end{equation}
 As in the proof of Theorem \ref{thm:neumann}, this functional is the extension of the functional $E + G$ (well-defined for functions in $W^{1,1}(\Omega,d,\nu) \cap L^2(\Omega,\nu)$) to the space $BV(\Omega,d,\nu) \cap L^2(\Omega,\nu)$. Since $\mathcal{G}_f$ is coercive, convex and lower semi-continuous, the minimisation problem
$$\min_{v \in L^2(\Omega, \nu)} \mathcal{G}_f(v)$$ admit and optimal solution $u$ and we have
\begin{equation}\label{minimun1}
\min_{v \in L^2(\Omega, \nu)} \mathcal{G}_f(v) = \inf_{v \in U} \bigg\{ E(Av) + G(v) \bigg\}.
\end{equation}
Let us take a sequence $u_n \in W^{1,1}(\Omega,d,\nu)$ which has the same trace as $u$ and converges strictly to $u$ and also $u_n \to u$ in $L^2(\Omega, \nu)$ (given by Lemma \ref{lem:goodapproximation}); then, it is a minimising sequence in \eqref{eq:primal}.  Since we have  \eqref{DFR1-TVflow} and \eqref{DFR2-TVflow}, we may use the $\varepsilon-$subdifferentiability property of minimising sequences given in \eqref{eq:epsilonsubdiff11N} and \eqref{eq:epsilonsubdiff21N}. By equation \eqref{eq:epsilonsubdiff21N}, for every $w \in L^2(\Omega,\nu)$, we have
$$G(w) - G(u_n) \geq \langle (w -u_n), A^* p^* \rangle  -\varepsilon_n.$$
Hence,
$$G(w) - G(u) \geq \langle (w -u), A^* p^* \rangle,$$
and consequently,
$$ A^* p^* \in \partial G(u) = \{ u - g \}. $$
and by \eqref{eq:dirichletdiv0}, we get
\begin{equation}\label{div1}
-\mbox{div}_0(\overline{p}^*) = u -g.
 \end{equation}
On the other hand, equation \eqref{eq:epsilonsubdiff11N} gives
\begin{equation}\label{eq:good}
0 \leq \int_{\partial\Omega} \bigg(|T_\Omega u_n - f| + p_0^* (T_\Omega u_n - f)  \bigg) \, d|D\1_{\Omega}|_\nu + \bigg( \| du_n \|_{ L^1(T^{*} \Omega)} + \int_\Omega du_n(\overline{p}^*) \, d\nu \bigg) \leq \varepsilon_n.
\end{equation}
Because the trace of $u_n$ is fixed (and equal to the trace of $u$), the integral on $\partial\Omega$ does not change with $n$; hence, it has to equal zero. Keeping in mind that $p_0^* = (\overline{p}^* \cdot \nu_\Omega)^-$, we get
\begin{equation}\label{front1}(-\overline{p}^* \cdot \nu_\Omega)^- \in \mbox{sign}(T_\Omega u - f)\quad | D_{\1_{\Omega}} |_\nu-a.e. \ \hbox{on} \ \partial \Omega.
\end{equation}
Since the integral on $\partial\Omega$ in \eqref{eq:good} equals zero and $\| du_n \|_{L^1(T^{*} \Omega)} = \int_{\Omega} |Du_n|_\nu$, in the integral on $\Omega$ we have
\begin{equation}\label{inerior1}
0 \leq \int_\Omega \bigg( |Du_n|_\nu + du_n(\overline{p}^*) \bigg) d\nu \leq \varepsilon_n.
\end{equation}
Finally, keeping in mind that $-\mbox{div}_0(\overline{p}^*) = u-g$ and again using the fact that the trace of $u_n$ is fixed and equal to the trace of $u$, by Gauss-Green's formula we get
$$ \int_\Omega du_n(\overline{p}^*) \, d\nu = - \int_\Omega u_n \, \mbox{div}_0(\overline{p}^*) \, d\nu - \int_{\partial\Omega} (\overline{p}^* \cdot \nu_\Omega)^- \, T_\Omega u_n \, d|D\1_\Omega| $$
$$ =\int_\Omega u_n \, (u -g) \, d\nu -\int_{\partial\Omega} (\overline{p}^* \cdot \nu_\Omega)^- \, T_\Omega u \, d|D\1_\Omega|. $$
Then, applying again Gauss-Green's formula, we have
$$\lim_{n \to \infty}  \int_\Omega du_n(\overline{p}^*) \, d\nu = \int_\Omega u \, (u-g) \, d\nu -\int_{\partial\Omega} (\overline{p}^* \cdot \nu_\Omega)^- \, T_\Omega u \, d|D\1_\Omega| $$
$$ = -  \int_\Omega u \,  \mbox{div}_0(\overline{p}^*) \, d\nu -\int_{\partial\Omega} (\overline{p}^* \cdot \nu_\Omega)^- \, T_\Omega u \, d|D\1_\Omega| = \int_\Omega (\overline{p}^*, Du). $$

Hence, since $u_n$ converges strictly to $u$ and having in mind \eqref{inerior1}, we get
$$ \int_\Omega |Du|_\nu - \int_\Omega (-p^*, Du) = \lim_{n \rightarrow \infty} \bigg( \int_\Omega |Du_n|_\nu - \int_\Omega (-\overline{p}^*, Du) \bigg) = 0.$$
This together with Proposition \ref{prop:boundonAnzellottipairing} implies that
\begin{equation}\label{measure1}
(-\overline{p}^*, Du) = |Du|_\nu \quad \mbox{as measures in } \Omega.
\end{equation}
Then, as consequence of \eqref{div1}, \eqref{measure1} and \eqref{front1}, we have that the pair $(u, -\overline{p}^*)$ satisfies \eqref{1RCond}, \eqref{2RCond} and \eqref{3RCond}, and therefore \eqref{RCond} holds.

To conclude, by \cite[Proposition 2.11]{Brezis} we have
$$ D(\partial \mathcal{TV}_f) \subset  D(\mathcal{TV}_f) =  BV(\Omega,d,\nu) \cap L^2(\Omega,\nu) \subset \overline{D(\mathcal{TV}_f)}^{L^2(\Omega,\nu)} \subset \overline{D(\partial \mathcal{TV}_f)}^{L^2(\Omega,\nu)},$$
from which follows the density of the domain. The proof of the complete accretivity of the operator $\mathcal{A}_f$ is similar to the one given in Theorem \ref{L1Neumann} for the operator $\mathcal{B_N}$.
\end{proof}

As for the Neumann problem, we can also prove the following characterisation of weak solutions in terms of variational inequalities.

\begin{corollary}\label{cor:characterisationweaksolutionsdirichlet}
The following conditions are equivalent: \\
$(a)$ $(u,v) \in \partial\mathcal{TV}_{f}$; \\
$(b)$ $(u,v) \in \mathcal{A}_{f}$, i.e. $u, v \in L^2(\Omega, \nu)$, $u \in BV(\Omega, d, \nu)$ and there exists a vector field  $X \in \mathcal{D}_0^{\infty,2}(\Omega)$ with  $\| X \|_\infty \leq 1$ such that $-\mbox{div}_0(X) = v$ in $\Omega$ and
\begin{equation}\label{eq:interiorconditiondirichlet}
(X,Du) = |Du|_\nu  \quad \hbox{as measures in } \Omega;
\end{equation}
\begin{equation}\label{eq:boundaryconditiondirichlet}
(X \cdot \nu_\Omega)^- \in \mathrm{sign}(T_\Omega u - f) \qquad |D\chi_\Omega|_\nu-\mbox{a.e. on } \Omega;
\end{equation}
$(c)$ $u, v \in L^2(\Omega, \nu)$, $u \in BV(\Omega, d, \nu)$ and there exists a vector field $X \in \mathcal{D}_0^{\infty,2}(\Omega)$ with  $\| X \|_\infty \leq 1$ such that $-\mbox{div}_0(X) = v$ in $\Omega$ and for every $w \in BV(\Omega,d,\nu) \cap L^2(\Omega,\nu)$
\begin{equation}\label{eq:variationalinequalitytvflowdirichlet}
\int_{\Omega} v(w-u) \, d\nu \leq \int_{\Omega} (X,Dw) - \int_{\Omega} |Du|_\nu \qquad\qquad\qquad\qquad\qquad\qquad\qquad
\end{equation}
\begin{equation*}
\qquad\qquad\qquad\qquad\qquad
+ \int_{\partial\Omega} (X \cdot \nu_\Omega)^- \, (T_\Omega w - f) \, d|D\chi_\Omega|_\nu - \int_{\partial\Omega} |T_\Omega u - f| \, d|D\chi_\Omega|_\nu;
\end{equation*}
$(d)$ $u, v \in L^2(\Omega, \nu)$, $u \in BV(\Omega, d, \nu)$ and there exists a vector field  $X \in \mathcal{D}_0^{\infty,2}(\Omega)$ with  $\| X \|_\infty \leq 1$ such that $-\mbox{div}_0(X) = v$ in $\Omega$ and for every $w \in BV(\Omega,d,\nu) \cap L^2(\Omega,\nu)$
\begin{equation}
\int_{\Omega} v(w-u) \, d\nu = \int_{\Omega} (X,Dw) - \int_{\Omega} |Du|_\nu \qquad\qquad\qquad\qquad\qquad\qquad\qquad
\end{equation}
\begin{equation*}
\qquad\qquad\qquad\qquad\qquad
+ \int_{\partial\Omega} (X \cdot \nu_\Omega)^- \, (T_\Omega w - f) \, d|D\chi_\Omega|_\nu - \int_{\partial\Omega} |T_\Omega u - f| \, d|D\chi_\Omega|_\nu;
\end{equation*}
\end{corollary}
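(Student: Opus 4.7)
The plan is to follow the same strategy as in the proof of Corollary \ref{cor:characterisationweaksolutions} (for the Neumann case), adapting the computations to account for the boundary term arising from the Dirichlet datum $f$. The equivalence $(a) \Leftrightarrow (b)$ is immediate from Theorem \ref{thm:dirichlet}. The implication $(d) \Rightarrow (c)$ is trivial since equality implies inequality. Thus the work reduces to proving $(b) \Rightarrow (d)$ and $(c) \Rightarrow (b)$.

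For $(b) \Rightarrow (d)$: starting from $-\mbox{div}_0(X) = v$, I would multiply by $w - u$, integrate over $\Omega$, and apply the Gauss-Green formula (Theorem \ref{thm:generalgreensformula}) to obtain
\begin{equation*}
\int_\Omega v(w-u)\,d\nu = \int_\Omega (X,Dw) - \int_\Omega (X,Du) + \int_{\partial\Omega} (T_\Omega w - T_\Omega u)(X \cdot \nu_\Omega)^- \, d|D\chi_\Omega|_\nu.
\end{equation*}
Then I use $(X,Du) = |Du|_\nu$ and split the boundary integrand as $(T_\Omega w - f)(X \cdot \nu_\Omega)^- - (T_\Omega u - f)(X \cdot \nu_\Omega)^-$; the second piece equals $-|T_\Omega u - f|$ by the sign condition \eqref{eq:boundaryconditiondirichlet}. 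This yields the desired equality in $(d)$.

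For $(c) \Rightarrow (b)$, which is the main obstacle, I would test the variational inequality with $w = u$ to obtain
\begin{equation*}
0 \leq \left(\int_\Omega (X,Du) - \int_\Omega |Du|_\nu\right) + \int_{\partial\Omega} \bigl[(X\cdot\nu_\Omega)^-(T_\Omega u - f) - |T_\Omega u - f|\bigr] \, d|D\chi_\Omega|_\nu.
\end{equation*}
By Proposition \ref{prop:boundonAnzellottipairing} the first parenthesis is non-positive; for the boundary integrand, I would invoke the fact (used implicitly in \cite{GM2021-2} and readily derived from the Gauss-Green formula by inserting compactly supported Lipschitz functions) that $\|(X\cdot\nu_\Omega)^-\|_{L^\infty(\partial\Omega,|D\chi_\Omega|_\nu)} \leq \|X\|_\infty \leq 1$, making the integrand non-positive as well. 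Since their sum is $\geq 0$, both terms vanish. Equality of the interior integrals combined with the measure-theoretic inequality $(X,Du) \leq |Du|_\nu$ forces $(X,Du) = |Du|_\nu$ as measures. Equality of the boundary integrals together with the pointwise inequality $(X\cdot\nu_\Omega)^-(T_\Omega u - f) \leq |T_\Omega u - f|$ forces $(X\cdot\nu_\Omega)^-(T_\Omega u - f) = |T_\Omega u - f|$ $|D\chi_\Omega|_\nu$-a.e.; combined with $|(X\cdot\nu_\Omega)^-| \leq 1$, this is exactly the sign condition \eqref{eq:boundaryconditiondirichlet} (where $T_\Omega u = f$ the condition is automatic since $\mathrm{sign}(0) = [-1,1]$).

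The main subtlety, as already noted, is the $L^\infty$ bound on the normal trace $(X\cdot\nu_\Omega)^-$. Unlike in the Neumann case where the boundary term simply vanishes, here the whole argument hinges on comparing $(X\cdot\nu_\Omega)^-(T_\Omega u - f)$ pointwise with $|T_\Omega u - f|$, which is only possible once we know the essential sup-norm control of the normal trace. Assuming this ingredient from the metric Gauss-Green theory of \cite{GM2021-2}, the argument closes cleanly in parallel to the Neumann analog.
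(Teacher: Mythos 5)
Your proposal is correct and follows essentially the same route as the paper: $(a)\Leftrightarrow(b)$ from Theorem \ref{thm:dirichlet}, $(b)\Rightarrow(d)$ via the Gauss--Green formula with the boundary integrand split by adding and subtracting $f$, and $(c)\Rightarrow(b)$ by testing with $w=u$ and forcing equality in both the interior estimate from Proposition \ref{prop:boundonAnzellottipairing} and the boundary estimate coming from $\|(X\cdot\nu_\Omega)^-\|_\infty\leq\|X\|_\infty\leq 1$. The paper likewise invokes this sup-norm bound on the normal trace without further elaboration, so your explicit flagging of it as the key ingredient is consistent with (indeed slightly more careful than) the published argument.
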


\begin{proof}
The equivalence of $(a)$ and $(b)$ is given by Theorem \ref{thm:dirichlet}. To see that $(b)$ implies $(d)$, multiply the equation $v = -\mbox{div}_0(X)$ by $w - u$ and integrate over $\Omega$ with respect to $\nu$. Using the Gauss-Green formula, i.e. Theorem \ref{thm:generalgreensformula}, we get
\begin{equation*}
\int_{\Omega} v (w-u) \, d\nu = - \int_{\Omega} (w - u) \, \mbox{div}_0(X) \, d\nu
\end{equation*}
\begin{equation*}
= \int_{\Omega} (X,Dw) - \int_{\Omega} (X,Du) + \int_{\partial\Omega} (X \cdot \nu_\Omega)^- \, T_\Omega w \, d|D\chi_\Omega|_\nu - \int_{\partial\Omega} (X \cdot \nu_\Omega)^- \, T_\Omega u \, d|D\chi_\Omega|_\nu
\end{equation*}
\begin{equation*}
= \int_{\Omega} (X,Dw) - \int_{\Omega} (X,Du) + \int_{\partial\Omega} (X \cdot \nu_\Omega)^- (T_\Omega w - f) \, d|D\chi_\Omega|_\nu - \int_{\partial\Omega} (X \cdot \nu_\Omega)^- (T_\Omega u - f) \, d|D\chi_\Omega|_\nu
\end{equation*}
\begin{equation*}
= \int_{\Omega} (X,Dw) - \int_{\Omega} |Du|_\nu + \int_{\partial\Omega} (X \cdot \nu_\Omega)^- \, (T_\Omega w - f) \, d|D\chi_\Omega|_\nu - \int_{\partial\Omega} |T_\Omega u - f| \, d|D\chi_\Omega|_\nu,
\end{equation*}
where the last equality follows from \eqref{eq:interiorconditiondirichlet} and \eqref{eq:boundaryconditiondirichlet}.

Clearly, $(d)$ implies $(c)$. To finish the proof, we need to prove that $(c)$ implies $(b)$. If we take $w = u$ in \eqref{eq:variationalinequalitytvflowdirichlet} and reorganise the equation, we get
\begin{equation}\label{eq:variationalequalitydirichlet}
\int_{\Omega} |Du|_\nu + \int_{\partial\Omega} |T_\Omega u - f| \, d|D\chi_\Omega|_\nu \leq \int_{\Omega} (X,Du) + \int_{\partial\Omega} (X \cdot \nu_\Omega)^- \, (T_\Omega u - f) \, d|D\chi_\Omega|_\nu.
\end{equation}
However, by Proposition \ref{prop:boundonAnzellottipairing} and $\| X \|_\infty \leq 1$ we have
\begin{equation}\label{eq:variationalequalitypart1}
\int_\Omega (X,Du) \leq \int_\Omega |Du|_\nu
\end{equation}
and since $\| X \|_\infty \leq 1$, we also have $\| (X \cdot \nu_\Omega)^- \|_\infty \leq 1$, so clearly we have
\begin{equation}\label{eq:variationalequalitypart2}
\int_{\partial\Omega} (X \cdot \nu_\Omega)^- \, (T_\Omega u - f) \, d|D\chi_\Omega|_\nu \leq  \int_{\partial\Omega} |T_\Omega u - f| \, d|D\chi_\Omega|_\nu.
\end{equation}
Hence, in inequality \eqref{eq:variationalequalitydirichlet} we actually have an equality. But this implies that \eqref{eq:variationalequalitypart1} and \eqref{eq:variationalequalitypart2} are also equalities, and this implies \eqref{eq:interiorconditiondirichlet} and \eqref{eq:boundaryconditiondirichlet} respectively.
\end{proof}

\begin{definition}\label{def35}
We define in $L^2(\Omega,\nu)$ the multivalued operator $\Delta^f_{1,\nu}$ by
\begin{center}$(u, v ) \in \Delta^f_{1,\nu}$ \ if, and only if, \ $-v \in \partial\mathcal{TV}_f(u)$.
\end{center}
\noindent  As usual, we will write $v\in \Delta^f_{1,\nu} u$ for $(u,v)\in \Delta^f_{1,\nu}$.
\end{definition}

Our concept of solution of the Dirichlet problem \eqref{Dirichlet100} is the following:
\begin{definition}\label{def:dirichlet1p}
{\rm  Given $u_0 \in L^2(\Omega,\nu)$, we say that $u$ is a {\it weak solution} of the Dirichlet problem \eqref{Dirichlet100} in $[0,T]$, if $u \in  C(0,T; L^2(\Omega))  \cap W^{1,1}(0, T; L^2(\Omega,\nu))$,   $u(0, \cdot) =u_0$, and for almost all $t \in (0,T)$
\begin{equation}\label{def:dirichletflow}
u_t(t, \cdot) \in \Delta^f_{1,\nu}(t, \cdot).
\end{equation}
In other words, there exist vector fields $X(t) \in \mathcal{D}_0^{\infty,2}(\Omega)$ with $\| X(t) \|_\infty \leq 1$ such that for almost all $t \in (0,T)$ the following conditions hold:
$$ \mbox{div}_0(X(t)) = u_t(t, \cdot) \quad \hbox{in} \ \Omega; $$
$$ (X(t), Du(t)) = |Du(t)|_\nu \quad \hbox{as measures};$$
$$ (X(t) \cdot \nu_\Omega)^- \in \mbox{sign}(T_\Omega u(t)-f) \qquad  | D_{\1_{\Omega}} |_\nu-a.e. \ \hbox{on} \ \partial \Omega.$$
}
\end{definition}

Then, as consequence of Theorem \ref{thm:dirichlet}, we have the following existence and uniqueness theorem, where the comparison principle is a consequence of the complete accretivity of the operator $\mathcal{A}_f$.

\begin{theorem}\label{ExistUniqDirichlet} Let $f \in L^1(\partial \Omega, \mathcal{H})$. For any $u_0 \in L^2(\Omega,\nu)$ and $T > 0$ there exists a unique weak solution of the Dirichlet problem  \eqref{Dirichlet100} in $[0,T]$. Moreover, the following comparison principle holds: if $u_1, u_2$ are weak solutions for the initial data $u_{1,0}, u_{2,0} \in  L^2(\Omega, \nu) \cap L^q(\Omega, \nu) $, respectively, then
\begin{equation}\label{DCompPrincipleplaplaceBV}
\Vert (u_1(t) - u_2(t))^+ \Vert_q \leq \Vert ( u_{1,0}- u_{2,0})^+ \Vert_q \quad \hbox{for all} \ 1 \leq q \leq \infty.
\end{equation}
\end{theorem}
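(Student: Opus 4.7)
The plan is to deduce the theorem directly from Theorem~\ref{thm:dirichlet}, which does all the heavy lifting. For existence and uniqueness, Proposition~\ref{semicont} ensures that $\mathcal{TV}_f$ is proper, convex and lower semicontinuous on the Hilbert space $L^2(\Omega,\nu)$, and Theorem~\ref{thm:dirichlet} identifies its subdifferential with $\mathcal{A}_f$ and guarantees that $D(\mathcal{A}_f)$ is dense in $L^2(\Omega,\nu)$. Brezis' theorem on gradient flows of proper, convex, lower semicontinuous functionals on a Hilbert space (see \cite{Brezis}) then produces, for every $u_0 \in L^2(\Omega,\nu)$ and every $T>0$, a unique strong solution $u \in C([0,T];L^2(\Omega,\nu)) \cap W^{1,1}(0,T;L^2(\Omega,\nu))$ of the abstract Cauchy problem \eqref{eq:abstractcauchyproblemdirichlet}. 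Translating the inclusion $-u_t(t) \in \partial\mathcal{TV}_f(u(t)) = \mathcal{A}_f(u(t))$ through the very definition of $\mathcal{A}_f$ supplies, for a.e.\ $t \in (0,T)$, a vector field $X(t) \in \mathcal{D}_0^{\infty,2}(\Omega)$ satisfying the three conditions of Definition~\ref{def:dirichlet1p}, so $u$ is the unique weak solution of \eqref{Dirichlet100}.

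For the comparison principle, the key input is the complete accretivity of $\mathcal{A}_f$ asserted in Theorem~\ref{thm:dirichlet}: for every $p \in P_0$ and every pair $(u_i,v_i) \in \mathcal{A}_f$, $i=1,2$, one has
\begin{equation*}
\int_\Omega p(u_1 - u_2)\,(v_1 - v_2)\, d\nu \geq 0.
\end{equation*}
Plugging in $v_i(t) = -(u_i)_t(t)$ pointwise in time and setting $J(r):=\int_0^r p(s)\, ds$, the $W^{1,1}$-regularity in time together with the chain rule gives
\begin{equation*}
\frac{d}{dt} \int_\Omega J(u_1(t)-u_2(t))\, d\nu \;=\; -\int_\Omega p(u_1-u_2)\,(v_1-v_2)\, d\nu \;\leq\; 0,
\end{equation*}
so $t \mapsto \int_\Omega J(u_1(t)-u_2(t))\, d\nu$ is nonincreasing. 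For $q \in [1,\infty)$ I would then approximate the derivative of $r \mapsto \tfrac{1}{q}((r)^+)^q$ by a monotone sequence $p_n \in P_0$, pass to the limit by monotone convergence, and recover the estimate for $\|(u_1-u_2)^+(t)\|_q$. For $q=\infty$, writing $k:=\|(u_{1,0}-u_{2,0})^+\|_\infty$ and applying the same argument to $p_n \in P_0$ approximating $\chi_{\{r>k\}}$ yields $\|(u_1(t)-u_2(t)-k)^+\|_1 = 0$ for every $t$, hence $(u_1-u_2)(t) \leq k$ $\nu$-a.e., as required.

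In summary, the substance of the result is essentially absorbed by Theorem~\ref{thm:dirichlet}: both the existence/uniqueness and the complete accretivity are already packaged there, and the only genuinely new step is the approximation procedure converting the abstract $P_0$-inequality into the concrete $L^q$ estimate. This passage is a now-classical technique (see \cite{BCr2,ACMBook}); the only mildly delicate point I expect is that each $p_n \in P_0$ must vanish near $r=0$, so one must first approximate $q((r)^+)^{q-1}$ and $\chi_{\{r>k\}}$ by functions supported away from the origin and then let $n\to\infty$.
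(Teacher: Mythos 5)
Your proposal is correct and follows exactly the route the paper takes: the paper's own proof is a one-line remark deducing existence and uniqueness from Theorem~\ref{thm:dirichlet} via the Brezis theory of maximal monotone operators, and the comparison principle from the complete accretivity of $\mathcal{A}_f$. The extra details you supply for converting the $P_0$-inequality into the $L^q$ estimates are the standard Bénilan--Crandall argument (cf.\ \cite{BCr2,ACMBook}) and are sound.
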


Let us also comment on the asymptotic behaviour of the homogenous Dirichlet problem. Assume that $f = 0$, i.e. we study the problem
\begin{equation}\label{ACP12}
\left\{ \begin{array}{ll} u'(t) + \partial \mathcal{TV}_0(u(t)) \ni 0, \quad t \in [0,T] \\[10pt] u(0) = u_0 \end{array}\right.
\end{equation}
with
$$
\mathcal{TV}_0 (u):= \left\{ \begin{array}{ll} \vert Du \vert_\nu (\Omega) + \displaystyle\int_{\partial \Omega} \vert  T_\Omega(u)\vert \, d |D\1_\Omega|_\nu   \quad &\hbox{if} \ u \in BV(\Omega, d, \nu) \cap L^2(\Omega,\nu), \\[10pt]+ \infty \quad &\hbox{if} \ u \in  L^2(\Omega, \nu) \setminus BV(\Omega, d, \nu).\end{array}\right.
$$
We will again apply the results from \cite{BuBu}. Clearly, $\mathcal{TV}_0$ is convex, $1$-homogenous and lower semi-continuous. Moreover,
$$L^2(\Omega, \nu)_0:= \{ u \in L^2(\Omega, \nu) \ : \ \mathcal{TV}_0(u)=0 \}^{\perp} \setminus \{ 0 \} = L^2(\Omega, \nu) \setminus \{ 0 \}.$$
Hence, $\mathcal{TV}_0$ is coercive if and only if there exists a constant $C >0$ such that
\begin{equation}\label{coercD}
\Vert u \Vert_{L^2(\Omega, \nu)} \leq C \mathcal{TV}_0(u), \quad \forall \, u\in L^2(\Omega, \nu) \setminus \{ 0 \},
\end{equation}
which is equivalent to the following {\it Sobolev inequality}
\begin{equation}\label{Sobolev1}
\Vert u  \Vert_{L^2(\Omega, \nu)} \leq C \vert Du \vert_\nu (\Omega) + \int_{\partial \Omega} \vert  T_\Omega(u)\vert \, d |D\1_\Omega|_\nu, \quad \forall \, u\in BV(\Omega, d, \nu) \cap L^2(\Omega,\nu).
\end{equation}
If the above Sobolev inequality holds, we can again apply \cite[Theorem 2.2]{BuBu} and \cite[Theorem 2.3]{BuBu} to get the following result.

\begin{theorem}\label{mainResultDir}
Assume that \eqref{Sobolev1} holds. For  $u_0 \in L^2(\Omega,\nu)$, let  $u(t)$ be the weak solution to the Dirichlet problem \eqref{Dirichlet100}. Then, we have
\begin{itemize}
\item[(i)] (Finite extinction time)
$$T_{\rm ex}(u_0) \leq \frac{\Vert u_0 \Vert_{L^2(\Omega, \nu)}}{\lambda_1(\mathcal{TV}_0)},$$
where
$$T_{\rm ex}(u_0):= \inf \{ T >0 \ : \ u(t) = 0,  \ \ \forall \, t \geq T \}.$$
\item[(ii)] (Asymptotic profiles for finite extinction) Let $$w(t):= \frac{u(t)}{\left(1 - \frac{1}{T_{\rm ex}(u_0)} t \right)},$$
and assume that $w(t)$ converges (possibly up to a subsequence) strongly to some $w_* \in L^2(\Omega, \nu)$ as $t \to T_{\rm ex}(u_0)$. Then it holds $$ \frac{1}{T_{\rm ex}(u_0)} \in \partial \mathcal{TV}_0(w_*),\quad   w_*  \not=0, \quad  \Vert w_* \Vert_{L^2(\Omega, \nu)} \leq \Vert u_0 \Vert_{L^2(\Omega, \nu)}.$$
\item[(iii)] (Ground state as asymptotic profile) An asymptotic profile $w_*$ is a ground state, i.e., $ w_* = {\rm argmin} \frac{\mathcal{TV}_0(w_*)}{\Vert w_* \Vert}$, if and only if $\lim_{t \nearrow T_{\rm ex}}(u_0) = \lambda_1(\mathcal{TV}_0).$
\end{itemize}
\end{theorem}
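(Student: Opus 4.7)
The strategy is to check the hypotheses of \cite[Theorem~2.2]{BuBu} and \cite[Theorem~2.3]{BuBu} for the functional $\mathcal{TV}_0$ on the Hilbert space $\mathcal{H} = L^2(\Omega,\nu)$, and then translate the three conclusions into our setting. The abstract machinery there is built precisely for gradient flows of coercive, $1$-homogeneous, proper, convex, lower semicontinuous functionals, so all the work is in verifying the input.

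First I would record that $\mathcal{TV}_0$ is proper, convex and lower semicontinuous (by Proposition~\ref{semicont} with $f=0$) and that it is positively $1$-homogeneous directly from its definition in \eqref{L2fuctme}. Coercivity in the sense \eqref{coerc} is exactly the hypothesis \eqref{Sobolev1} that we are assuming. The nontrivial input is the computation of the null space: since $f=0$, the paragraph preceding the theorem gives
\[
\{u \in L^2(\Omega,\nu) : \mathcal{TV}_0(u) = 0\}^\perp \setminus \{0\} = L^2(\Omega,\nu) \setminus \{0\},
\]
because $|Du|_\nu(\Omega) = 0$ together with $T_\Omega u = 0$ on $\partial\Omega$ forces $u \equiv 0$. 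Hence the Rayleigh quotient $\lambda_1(\mathcal{TV}_0) = \inf_{u \neq 0} \mathcal{TV}_0(u)/\|u\|_{L^2}$ is well defined and strictly positive, and the abstract Cauchy problem \eqref{ACP12} fits the BuBu framework with the unique strong solution identified with the weak solution from Definition~\ref{def:dirichlet1p} through Theorem~\ref{thm:dirichlet}.

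For (i), applying \cite[Theorem~2.2]{BuBu} gives the extinction time estimate immediately; the underlying reason is that the energy dissipation identity $\tfrac{d}{dt}\tfrac12\|u(t)\|_{L^2}^2 = -\mathcal{TV}_0(u(t))$, combined with coercivity $\mathcal{TV}_0(u(t)) \geq \lambda_1(\mathcal{TV}_0) \|u(t)\|_{L^2}$, produces a differential inequality $\tfrac{d}{dt}\|u(t)\|_{L^2} \leq -\lambda_1(\mathcal{TV}_0)$, so that $\|u(t)\|_{L^2}$ reaches zero in time at most $\|u_0\|_{L^2}/\lambda_1(\mathcal{TV}_0)$, and since $0$ is the only element of the null space of $\mathcal{TV}_0$, this is precisely extinction.

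For (ii) and (iii), the rescaled function $w(t) = u(t)/(1 - t/T_{\rm ex}(u_0))$ satisfies, by $1$-homogeneity of $\partial \mathcal{TV}_0$, an evolution whose stationary points along the flow are the eigenvectors $w$ of $\partial\mathcal{TV}_0$ with eigenvalue $1/T_{\rm ex}(u_0)$; this is exactly the content of \cite[Theorem~2.3]{BuBu}, which also gives the norm bound $\|w_*\|_{L^2} \leq \|u_0\|_{L^2}$. The ground state characterisation in (iii) then reduces to the variational definition of $\lambda_1(\mathcal{TV}_0)$: any eigenvalue $1/T_{\rm ex}(u_0)$ is at least $\lambda_1$, with equality iff $w_*$ realises the infimum. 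The only place I expect any genuine friction is the tacit identification of the strong solution produced by the maximal monotone operator theory with our weak solution of \eqref{Dirichlet100}; this is covered by Theorem~\ref{thm:dirichlet} once we note that the abstract gradient flow of $\mathcal{TV}_0$ coincides with $u'(t) \in \Delta_{1,\nu}^0 u(t)$, so the BuBu conclusions transfer verbatim.
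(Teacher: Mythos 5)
Your proposal is correct and follows exactly the route the paper takes: the paper's entire proof is the observation that, under \eqref{Sobolev1}, the functional $\mathcal{TV}_0$ is proper, convex, lower semicontinuous (Proposition \ref{semicont}), $1$-homogeneous and coercive with trivial null space, so \cite[Theorems 2.2 and 2.3]{BuBu} apply directly to the gradient flow \eqref{ACP12}, which is identified with the weak solution via Theorem \ref{thm:dirichlet}. Your additional sketch of the dissipation inequality behind (i) is a correct gloss on the abstract result, not a deviation from it.
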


 Finally, let us comment on the relationship between our results and the only other results (to the best of our knowledge) on the total variation flow in metric measure spaces.

\begin{remark}\label{VariationalSol} {\rm
In \cite{BCC} the authors obtain existence and uniqueness of a variational solution for the Cauchy-Dirichlet problem
\begin{equation}\label{CauchyDirichlet}
\left\{ \begin{array}{ll} u_t  = {\rm div} \left(\frac{ D u}{\vert Du \vert_\nu } \right)  \quad &\hbox{in} \ \ \Omega_T:= \Omega \times (0,T) , \\[10pt] u  = u_0 \quad &\hbox{on} \ \ \partial_{par} \Omega_T:= (\overline{\Omega} \times \{0\}) \cup (\partial \Omega \times (0,T)). \end{array} \right.
\end{equation}
Let $\Omega^*$ a bounded domain that compactly contains $\Omega$. Given $u_0 \in BV(\Omega^*)$, we set
$$BV_{u_0}(\Omega):= \{ u \in BV(\Omega^*) \ : \ u = u_0 \ \ \hbox{a.e. \ on} \ \Omega^* \setminus \Omega \}.$$
Then, $u \in L^1_w(0,T; BV_{u_0}(\Omega)) \cap C([0,T]; L^2(\Omega^*))$ is called a {\it variational solution} of \eqref{CauchyDirichlet} if the following variational inequality
\begin{equation}\label{varineq}
\begin{array}{ll} \displaystyle\int_0^T \vert Du(t) \vert_\nu(\Omega^*) dt \leq \int_0^T \left[\int_{\Omega^*} \partial_t v (v - u) d \nu + \vert Dv(t) \vert_\nu(\Omega^*) \right]dt \\[10pt] - \frac12 \Vert (v-u)(T) \Vert^2_{L^2(\Omega^*} + \frac12 \Vert v(0)-u_0 \Vert^2_{L^2(\Omega^*}  \end{array}
\end{equation}
holds true for any $v \in L^1_w(0,T; BV_{u_0}(\Omega))$ with $\partial_t v \in L^2(\Omega^*_T)$ and $v(0) \in L^2(\Omega^*)$.

Let us see that if $u \in L^1_w(0,T; BV_{u_0}(\Omega)) \cap C([0,T]; L^2(\Omega^*))$ is a weak solution of of the Dirichlet problem  \eqref{Dirichlet100},  with $f \in L^1(\partial\Omega,\mathcal{H})$ such that $T_{\mathbb{X} \setminus \overline{\Omega}} u_0 = f$, then $u$ is a variational solution of  problem \eqref{varineq}.  To this end, we will make a similar calculation as in the proof of Corollary \ref{cor:characterisationweaksolutionsdirichlet}, but we will additionally take into account the different form of the Dirichlet boundary condition and integration with respect to time in the definition of variational solutions. By the definition of weak solutions, there exist vector fields $X(t) \in  \mathcal{D}_0^{\infty,2}(\Omega)$ with $\| X(t) \|_\infty \leq 1$ such that, for almost all $t \in (0,T)$, the following conditions hold:
$$  \mbox{div}_0(X(t)) = \partial_t u(t, \cdot) \quad \hbox{in} \ \Omega; $$
$$ (X(t), Du(t)) = |Du(t)|_\nu \quad \hbox{as measures};$$
$$ (X(t) \cdot \nu_\Omega)^- \in \mbox{sign}(T_\Omega u(t)-f) \qquad  | D_{\1_{\Omega}} |_\nu-a.e. \ \hbox{on} \ \partial \Omega.$$

Then, given a test function $v \in L^1_w(0,T; BV_{u_0}(\Omega))$ with $\partial_t v \in L^2(\Omega^*_T)$ and $v(0) \in L^2(\Omega^*)$,  we want to show that \eqref{varineq} holds. We start by computing the term with the time derivative using the characterisation of weak solutions. Notice that if $u, v \in BV_{u_0}(\Omega)$, then $u - v = 0$ $\nu$-a.e. on $\Omega^* \setminus \Omega$; hence, applying Green formula we have
$$\int_0^T \int_{\Omega^*} \partial_t u (v - u) d \nu dt = \int_0^T \int_{\Omega} \partial_t u (v - u) d \nu dt = \int_0^T \int_{\Omega} \mbox{div}_0(X(t)) (v - u) d \nu dt $$
$$  = - \int_0^T \int_{\Omega} (X(t), Dv(t)) dt + \int_0^T \int_{\Omega} (X(t), Du(t)) dt $$
$$  - \int_{\partial\Omega} T_\Omega v (X(t) \cdot \nu_\Omega)^- d|D\1_\Omega|_\nu + \int_{\partial\Omega} T_\Omega u (X(t) \cdot \nu_\Omega)^- d|D\1_\Omega|_\nu$$
$$  = - \int_0^T \int_{\Omega} (X(t), Dv(t)) dt + \int_0^T \int_{\Omega} (X(t), Du(t)) dt $$
$$ - \int_{\partial\Omega} (T_\Omega v - f) (X(t) \cdot \nu_\Omega)^- d|D\1_\Omega|_\nu + \int_{\partial\Omega} (T_\Omega u - f) (X(t) \cdot \nu_\Omega)^- d|D\1_\Omega|_\nu$$

and
$$\int_0^T \int_{\Omega^*} (\partial_t v - \partial_t u )(v - u) d \nu dt = \frac12 \Vert (v-u)(T) \Vert^2_{L^2(\Omega^*,\nu)} - \frac12 \Vert v(0)-u_0 \Vert^2_{L^2(\Omega^*,\nu)}. $$
 Since $u_0$ is a weak solution and $T_{\mathbb{X} \setminus \overline{\Omega}} u_0 = f$, adding the two equalities we get
$$\int_0^T \int_{\Omega^*} \partial_t v (v - u) d \nu dt $$
$$= - \int_0^T \int_{\Omega} (X(t), Dv(t)) dt + \int_0^T \int_{ \Omega} (X(t), Du(t)) dt - \int_{\partial\Omega} (T_\Omega v - f) (X(t) \cdot \nu_\Omega)^- d|D\1_\Omega|_\nu $$
$$  + \int_{\partial\Omega} (T_\Omega u - f) (X(t) \cdot \nu_\Omega)^- d|D\1_\Omega|_\nu + \frac12 \Vert (v-u)(T) \Vert^2_{L^2(\Omega^*,\nu)} - \frac12 \Vert v(0)-u_0 \Vert^2_{L^2(\Omega^*,\nu)} $$
$$\geq - \int_0^T \vert Dv(t)\vert_\nu ( \Omega) dt + \int_0^T \vert Du(t)\vert_\nu ( \Omega) dt - \int_0^T \int_{\partial\Omega} |T_\Omega v - f| d|D\1_\Omega| dt $$
$$  + \int_0^T \int_{\partial\Omega} |T_\Omega u - f| d|D\1_\Omega| dt - \int_0^T |Du_0|(\Omega^* \setminus \overline{\Omega}) dt + \int_0^T |Du_0|(\Omega^* \setminus \overline{\Omega}) dt
$$
$$ + \frac12 \Vert (v-u)(T) \Vert^2_{L^2(\Omega^*,\nu)} - \frac12 \Vert v(0)-u_0 \Vert^2_{L^2(\Omega^*,\nu)}$$
$$= - \int_0^T \vert Dv(t)\vert_\nu (\Omega^*) dt + \int_0^T \vert Du(t)\vert_\nu (\Omega^*) dt + \frac12 \Vert (v-u)(T) \Vert^2_{L^2(\Omega^*,\nu)} - \frac12 \Vert v(0)-u_0 \Vert^2_{L^2(\Omega^*,\nu)}.$$
In the final equality we used \cite[Proposition 5.11]{HKLL}. Then, \eqref{varineq} holds and consequently $u$ is a variational solution,  so the definition of weak solutions introduced in this paper is consistent with the definition of variational solutions introduced in \cite{BCC}. Introducing weak solutions requires more assumption on the set $\Omega$, but allows us to describe the solutions more precisely. $\blacksquare$}
\end{remark}

\noindent {\bf Acknowledgment.} The first author has been partially supported by the DFG-FWF project FR 4083/3-1/I4354, by the OeAD-WTZ project CZ 01/2021, and by the project 2017/27/N/ST1/02418 funded by the National Science Centre, Poland. The second author has been partially supported by the Spanish MCIU and FEDER, project PGC2018-094775-B-100.

\end{document}